\newtheorem{thm}{Theorem}[section]
\newtheorem{prop}[thm]{Proposition}
\newtheorem{lemma}[thm]{Lemma}
\newtheorem{cor}[thm]{Corollary}
\newtheorem{defn}[thm]{Definition}
\newtheorem{rem}{Remark}[section]
\def\O{{\mathcal O}}
\def\K{{\mathcal K}}
\def\T{{\mathcal T}}
\def\F{{\mathcal F}}
\def\L{{\mathcal L}}
\def\bT{{\Bbb T}}
\def\cst{{C${}^*$}}
\newcommand{\hcomp}{\hat{\mathbb{C}}}
\newcommand{\cL}{{\mathcal L}}
\newcommand{\cO}{{\mathcal O}}
\begin{document}
\title{KMS states on finite-graph C*-algebras}
\author{Tsuyoshi Kajiwara}
\address[Tsuyoshi Kajiwara]{Department of Environmental and
Mathematical Sciences,
Okayama University, Tsushima, 700-8530,  Japan}

\author{Yasuo Watatani}
\address[Yasuo Watatani]{Department of Mathematical Sciences,
Kyushu University, Motooka, Fukuoka, 819-0395, Japan}
\maketitle
\begin{abstract}
We study KMS states on
finite-graph \cst -algebras with sinks and sources. We compare 
finite-graph \cst -algebras with \cst -algebras associated with
complex dynamical systems of rational functions.  We show that
if the inverse temperature $\beta$ is large, then the set of extreme 
$\beta$-KMS states is parametrized by the set of sinks of the graph. 
This means that the sinks of a graph correspond to the 
branched points of a rational funcition from the point of KMS states. 
Since we consider graphs with sinks and sources, left actions 
of the associated bimodules are not injective. Then the associated 
graph \cst -algebras are realized as (relative) Cuntz-Pimsner algebras
studied by Katsura.  
We need to generalize Laca-Neshevyev's theorem of the construction of 
KMS states on Cuntz-Pimsner algebras to the case that left actions 
of bimodules are not injective.

\medskip\par\noindent
KEYWORDS: KMS states, graph \cst -algebras, \cst -correspondences

\medskip\par\noindent
AMS SUBJECT CLASSIFICATION: 46L08, 46L55

\end{abstract}

\section{Introduction}
KMS states on \cst -algebras are originated from the equilibrium states
in statistical physics.  
Olsen-Pederson \cite{OP} studied KMS states on Cuntz-algebra
$\cO_{n}$ (\cite{Cu}) of $n$ generators with respect to the gauge action, 
and they proved that
a $\beta$-KMS state exists if and only if $\beta = \log n$ and the 
$\beta$-KMS state is unique. Evans \cite{Ev} extended the result  
to certain quasi-free automorphisms. 
Enomoto-Fujii-Watatani \cite{EFW} studied KMS states on
Cuntz-Krieger algebras $\cO_{A}$ (\cite{CK}) 
associated with finite graphs with no sinks nor sources. 
If the $0$-$1$ matrix $A$ is irreducible and not a permutation, then
a $\beta$-KMS state exists if and only if $\beta = \log r(A)$ and the 
$\beta$-KMS state is unique, where $r(A)$ is the spectral radius of $A$. 
Exel-Laca \cite{EL} studied KMS states on Exel-Laca algebras and 
Toeplitz extensions. They introduced  KMS states of finite type and infinite 
type, which are useful in our study. They showed that there occur 
phase transitions. Exel \cite{Ex} considered KMS states on 
his  $C^*$-algebras by endomorphism with a transfer operator. 
Kumjian and Renault \cite{KR}  studied 
KMS states on $C^*$-algebras associated with expansive maps.

Pimsner \cite{Pi} introduced a general construction of \cst -algebras
through Hilbert \cst -bimodules or $C^*$-correspondances. 
Many \cst -algebras are known to be
expressed as Cuntz-Pimsner \cst -algebras.

The above results on KMS states are extended to 
KMS states on \cst -algebras associated with subshifts 
in Matsumoto-Watatani-Yoshida \cite{MYW} and  
Cuntz-Pimsner algebras associated with bimodules of finite basis 
in Pinzari-Watatani-Yonetani \cite{PWY}. Laca-Neshevyev \cite{LN} 
gave a 
theorem of construction of KMS
states for general Cuntz-Pimsner algebras. Using their theorem, we 
 classified KMS states on  \cst -algebras associated with the
complex dynamical systems on the Riemann sphere 
$\hcomp$ given by iteration 
of rational functions $R$
and \cst -algebras associated with self-similar sets in \cite{IKW} 
with Izumi. In particular we showed that there exists a phase transition at 
$\beta = \log \deg R$. If the inverse temperature 
$\beta > \log \deg R$,   then the set of extreme 
$\beta$-KMS states is parametrized by the set of branched points. 

On the other hand, Cuntz-Krieger algebras are 
generalized as graph \cst -algebras associated with general graphs 
having sinks and sources, which are  studied for example in 
Kumujian-Pask-Raeburn \cite{KPR}, 
Kumujian-Pask-Raeburn-Renault \cite{KPRR} 
and Fowler-Laca-Raeburn \cite{FLR}.   
They consider relations  between graphs and 
the associated graph \cst -algebras. 
See \cite{R} by I. Raeburn  to know a 
total aspect of graph \cst -algebras. 

In this paper we study KMS states on 
finite-graph \cst -algebras associated with graphs 
having sinks and sources. 
We show that if the inverse temperature $\beta$ is large, 
then the set of extreme 
$\beta$-KMS states is parametrized by the set of sinks of the graph.
We compare 
finite-graph \cst -algebras with \cst -algebras associated with
complex dynamical systems of rational functions.  
Our result suggests that the sinks of a graph correspond to the 
branched points of a rational funcition from the point of KMS states.

Relative Cuntz-Pimsner algebras are a generalization of Cuntz-Pimsner
algebras and are defined in \cite{MS} and  studied in \cite{FMR}.
As in Katsura \cite{Kat2}, the graph $C^*$-algebras associated with
graphs having sources and sinks can be constructed as relative
Cuntz-Pimsner algebras of bimodules such that left actions 
are not injective. Hence we shall generalize 
Laca-Neshevyev's theorem of KMS states to that 
of relative Cuntz-Pimsner algebras associated with general 
\cst -correspondences
with a countable basis. Our proof is more constructive than
that of Laca-Neshevyev.   We need to 
investigate the structure of cores of relative Cuntz-Pimsner algebras 
to study it.

The contents of the present paper is as follows.  In section 2, we present
the fundamental matters of \cst -correspondences, relative Cuntz-Pimsner algebras
and the structure of cores of relative Cuntz-Pimsner algebras.  In section
3, we present properties of countable basis, the degree of
\cst -correspondences.  We prove a theorem of construction of KMS states
on relative Cuntz-Pimsner algebras which generalize the theorem of
Laca-Neshevyev.  In section 4, we present a classification of KMS
states on finite-graph \cst -algebras, and show that sinks correspond 
to KMS states if the inverse temperature is sufficiently large.

\section{\cst -correspondences and the structure of Cores}
In this section, we present fundamental matters of \cst -correspondences,
the construction of associated \cst -algebras, 
and investigate the structure of the
cores of relative Cuntz-Pimsner algebras using some results of Katsura 
\cite{Kat2},
\cite{Kat3}.

\begin{defn}
Let $A$ be a \cst -algebra. A linear space $X$ 
is called a Hilbert $A$-module if the following conditions hold:
\begin{enumerate}
  \item There exist an $A$-valued hermitian, positive definite inner
        product $(\cdot | \cdot)_A$ and a right action of $A$ which is
        compatible with the $A$-inner product.
  \item $X$ is complete with respect to the norm $\|x\| =
        \|(x|x)_A\|^{1/2}$.
\end{enumerate}
If the linear span of $A$-inner product is dense in $A$,then $X$ is called
full.
\end{defn}

Let $A$ be a $C^*$-algebra and  $X$ a Hilbert $A$-module. 
 We denote by $\cL(X)$ the set of linear operators on $X$ which are
adjointable with respect to the $A$-valued inner product.
For $x$ and $y \in X$, put $\theta_{x,y}z = x(y|z)_A$ for $z \in X$.   We
denote by $\K(X)$ the norm closure of the linear span of $\{\,\theta_{x,y}\,
| x, y \in X \}$ in $\cL(X)$.  If there exists a *-homomorphism
$\phi$ from $A$ to $\cL(X)$, then we call the pair $(X,\phi)$
(or simply $X$) a \cst-correspondence over $A$.
We assume neither that $X$ is full, that $\phi$ is non-degenerate nor that
$\phi$ is isometric.  Let $J_X = \phi^{-1}(\K(X)) \cap (\ker 
\phi)^{\perp}$,
and $J$ be a closed two sided ideal of $A$ contained in $J_X$.

A representation $\pi$ of a \cst -correspondence $(X,\phi)$ on a Hilbert
space ${\mathcal H}$ consists of representations $\pi_A$ and $\pi_X$ of
$A$ and $X$ i.e. $\pi_A$ is a *-homomorphism from $A$ to $B({\mathcal 
H})$ and
$\pi_X$ is a linear map from $X$ to $B({\mathcal H})$ satisfying
\[
  \pi_X(x)^*\pi_X(y) = \pi_A((x|y)_A), \quad
  \pi_X(x)\pi_A(a)  = \pi_X(xa), \qquad
  \pi_A(a)\pi_X(x)  = \pi_X(\phi(a)x),
\]
for $x$, $y \in X$ and $a \in A$.
When $\pi_A$ is injective, $\pi_X$ is isometric.

For a representation $\pi=(\pi_A,\pi_X)$ of $(X,\phi)$, there corresponds
  a representation $\pi_K$ of $\K(X)$ satisfying
$\pi_K(\theta_{x,y}) = \pi_X(x)\pi_X(y)^*$  \cite{KPW1}.
The representation $\pi_X^{(n)}$ of $X^{\otimes n}$, $\pi_K^{(n)}$
of $\K(X^{\otimes n})$ are also defined naturally.  We use the
notation $\K(X^{\otimes 0})=A$ and $\pi_K^{0} = \pi_A$ for convenience.

\begin{defn}(Fowler-Muhly-Raeburn \cite{FMR}, 
Katsura \cite{Kat3}) \label{def:J-covariance}
Let $J$ be a closed two sided ideal of $A$ contained in $J_X$.
A representation $\pi=(\pi_A,\pi_X)$ of a \cst -correspondence $(X,\phi)$ 
is said to be $J$-covariant if
\begin{equation} \label{eq:J-covariance}
\pi_A(a) = \pi_K(\phi(a)) \quad \text{ for any } a \in J.
\end{equation}
\end{defn}

Let $\pi = (\pi_A,\pi_X)$ 
be the representation of $(X,\phi)$ which is universal
for all $J$-covariant representations.
The relative Cuntz-Pimsner algebra $\cO_X(J) = {\rm C}^*(\pi)$ is
 the $C^*$-algebra 
generated by $\pi_A(A)$ and $\pi_X(X)$ for the 
universal representation $\pi$. 
We note that $\pi_A$ of the universal representation $\pi$
is known to be injective (Katsura \cite{Kat2} Proposition 4.11).

\begin{lemma} \label{lem:J1} (T.Katsura \cite{Kat2} Proposition 3.3)
Let $\pi= (\pi_A,\pi_X)$ be a representation of $(X,\phi)$.
Assume that $\pi$ is $J$-covariant and $\pi_A$ is injective.
Take $a$ in $A$. 
If $\pi_A(a)$ is in  $\pi_K(\K(X))$, then $a$ is in $J_X$ 
and $\pi_A(a) = \pi_K(\phi(a))$.
\end{lemma}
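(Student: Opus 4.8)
The plan is to convert the operator-level hypothesis into an identity between adjointable operators on $X$ and then cancel $\pi_X$ using injectivity. Write $\pi_A(a) = \pi_K(k)$ with $k \in \K(X)$. I would first record the two intertwining relations
\[
\pi_A(a)\pi_X(x) = \pi_X(\phi(a)x), \qquad \pi_K(k)\pi_X(x) = \pi_X(kx) \qquad (x \in X).
\]
The first is one of the defining relations of the representation. The second I would check on a rank-one operator $k = \theta_{u,v}$, using $\pi_X(v)^{*}\pi_X(x) = \pi_A((v|x)_A)$ and then $\pi_X(u)\pi_A((v|x)_A) = \pi_X(u(v|x)_A) = \pi_X(\theta_{u,v}x)$, and extend to all $k \in \K(X)$ by linearity and continuity (both sides are bounded in $k$).

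Since $\pi_A(a) = \pi_K(k)$, comparing the two relations gives $\pi_X(\phi(a)x) = \pi_X(kx)$ for every $x \in X$. Because $\pi_A$ is injective, $\pi_X$ is isometric, hence injective, so $\phi(a)x = kx$ for all $x \in X$. As an adjointable operator is determined by its action on every element of $X$, this forces $\phi(a) = k$ in $\cL(X)$. In particular $\phi(a) = k \in \K(X)$, so $a \in \phi^{-1}(\K(X))$, and moreover $\pi_K(\phi(a)) = \pi_K(k) = \pi_A(a)$, which is the second assertion of the lemma.

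It remains to prove $a \in (\ker\phi)^{\perp}$. For this I would establish the companion relations
\[
\pi_A(b)\pi_K(k') = \pi_K(\phi(b)k'), \qquad \pi_K(k')\pi_A(b) = \pi_K(k'\phi(b)) \qquad (b \in A,\, k' \in \K(X)),
\]
checking them on $k' = \theta_{u,v}$ via $\theta_{\phi(b)u,v} = \phi(b)\theta_{u,v}$ and $\theta_{u,v}\phi(b) = \theta_{u,\phi(b^{*})v}$, and extending by continuity. For $b \in \ker\phi$ both right-hand sides vanish; taking $k' = k$ and using $\pi_A(a) = \pi_K(k)$ gives $\pi_A(ba) = \pi_A(b)\pi_K(k) = 0$ and $\pi_A(ab) = \pi_K(k)\pi_A(b) = 0$, whence $ba = ab = 0$ by injectivity of $\pi_A$. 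Thus $a$ annihilates the self-adjoint ideal $\ker\phi$, i.e.\ $a \in (\ker\phi)^{\perp}$, and together with $\phi(a) \in \K(X)$ this yields $a \in J_X$. The step needing the most care is the cancellation producing $\phi(a) = k$: it relies essentially on $\pi_X$ being \emph{isometric} (equivalently, faithful on the module), which is precisely what injectivity of $\pi_A$ supplies. Notably the argument uses only this injectivity and the representation relations; the $J$-covariance hypothesis, which holds automatically in the intended application to the universal representation, is not actually invoked. The passages from rank-one operators $\theta_{u,v}$ to arbitrary compacts are routine density-and-continuity arguments.
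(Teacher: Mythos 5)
Your proof is correct: the paper offers no argument of its own for this lemma, citing Katsura \cite{Kat2} Proposition 3.3, and your reconstruction---writing $\pi_A(a)=\pi_K(k)$, cancelling $\pi_X$ via its isometry (supplied by injectivity of $\pi_A$) to conclude $\phi(a)=k\in\K(X)$, then annihilating $\ker\phi$ on both sides through $\pi_A(b)\pi_K(k)=\pi_K(\phi(b)k)=0$ and $\pi_K(k)\pi_A(b)=\pi_K(k\phi(b))=0$---is precisely the standard argument behind Katsura's proposition. Your closing observation is also accurate: $J$-covariance is never invoked, only injectivity of $\pi_A$ and the representation relations, so the lemma is really a statement about injective representations, with covariance first entering in Lemma \ref{lem:J2}.
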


\begin{lemma}\label{lem:J2} Let $\pi= (\pi_A,\pi_X)$ be 
a representation of $(X,\phi)$. 
Assume that $\pi$ is $J$-covariant and $\pi_A$ is injective.  Then
for $a \in A$, $\pi_A(a)$ is in  $\pi_K(\K(X))$ if and only if 
$a$ is in $J$.
\end{lemma}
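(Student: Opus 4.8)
The plan is to treat the two directions separately; the backward one is immediate, and the forward one carries all the content. If $a \in J$, then since $J \subseteq J_X \subseteq \phi^{-1}(\K(X))$ we have $\phi(a) \in \K(X)$, so $\pi_K(\phi(a)) \in \pi_K(\K(X))$, while $J$-covariance gives $\pi_A(a) = \pi_K(\phi(a))$; hence $\pi_A(a) \in \pi_K(\K(X))$.

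For the forward direction, suppose $\pi_A(a) \in \pi_K(\K(X))$. Then \lemref{lem:J1} already yields $a \in J_X$ together with the sharper relation $\pi_A(a) = \pi_K(\phi(a))$, so the statement reduces to identifying the covariance ideal
$$ I := \{\, a \in J_X : \pi_A(a) = \pi_K(\phi(a)) \,\} $$
with $J$. Here $J$-covariance gives $J \subseteq I$ at once, so the point is the reverse inclusion $I \subseteq J$.

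To get $I \subseteq J$ I would play the universality of $\pi$ off against a concrete $J$-covariant model whose covariance ideal is exactly $J$. Let $\rho = (\rho_A,\rho_X)$ be the relative Fock representation attached to $(X,\phi)$ and $J$ (in the sense of Fowler-Muhly-Raeburn \cite{FMR}): it is $J$-covariant and $\rho_A$ is injective. As $\pi$ is universal for $J$-covariant representations, there is a surjective $*$-homomorphism $\sigma : \cO_X(J) \to C^*(\rho)$ with $\sigma\circ\pi_A = \rho_A$, $\sigma\circ\pi_X = \rho_X$, and hence $\sigma\circ\pi_K = \rho_K$. Applying $\sigma$ to the identity $\pi_A(a) = \pi_K(\phi(a))$ valid for $a \in I$ gives $\rho_A(a) = \rho_K(\phi(a))$, so $a$ lies in the covariance ideal of $\rho$. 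Thus $I$ is contained in that ideal, and everything comes down to showing it equals $J$.

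This last verification is the step I expect to be the main obstacle. By \lemref{lem:J1} applied to $\rho$ and injectivity of $\rho_A$, it amounts to the identity $\rho_A(A) \cap \rho_K(\K(X)) = \rho_A(J)$, i.e. that the Fock model creates no covariance relations beyond those imposed on $J$. This is precisely where the structure of the cores from Section 2 enters: one exhibits $\rho_K(\K(X))$ as an ideal of the bottom layer $\rho_A(A) + \rho_K(\K(X))$ and reads off from the action on the Fock module that the corresponding quotient is $A/J$, the degree-zero summand recording exactly the class modulo $J$. Granting this, $I \subseteq J$, hence $I = J$; combined with the reduction above, $\pi_A(a) \in \pi_K(\K(X))$ holds if and only if $a \in J$. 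A variant avoiding the Fock model is to argue inside $\cO_X(J)$ via the faithful gauge-invariant conditional expectation onto the core and the same description of its bottom layer; either route isolates the identical fact that no spurious covariance appears.
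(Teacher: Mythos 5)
Your backward direction and the reduction of the forward direction via \lemref{lem:J1} are correct and coincide with the paper's first step. After that, though, your argument has a genuine gap at exactly the point you yourself flag: the claim that the Fock-type model creates no covariance beyond $J$, i.e.\ $\rho_A(A)\cap\rho_K(\K(X))=\rho_A(J)$, is never proved --- you write ``Granting this.'' This is not a peripheral verification: given \lemref{lem:J1} and your comparison map $\sigma$, that identity is equivalent to the lemma itself, and it is precisely the fact the paper disposes of by citing Katsura \cite{Kat3} Corollary 11.4, which asserts $\{a\in A \mid \phi(a)\in\K(X),\ \pi_A(a)=\pi_K(\phi(a))\}=J$. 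So your proposal in effect replaces the paper's citation by an equivalent unproven statement. Moreover, the one-sentence mechanism you offer underestimates the difficulty: in the concrete model, where one passes to $\cL(F)/\K(FJ)$ for the Fock module $F$, elements of the ideal $\K(FJ)$ have nonzero components in \emph{every} degree, so whether $\rho_A(a)-\rho_K(k)$ vanishes in the quotient is not read off from the degree-zero corner; the relation couples all layers, and disentangling it requires approximate-unit and core arguments of the kind in \cite{FMR} and in Katsura \cite{Kat2} Section 5 --- that is, a genuine proof of the sort the paper is outsourcing to Katsura.

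There is also a mismatch with the hypotheses: your surjection $\sigma:\cO_X(J)\to C^*(\rho)$ with $\sigma\circ\pi_A=\rho_A$ exists only when $\pi$ is the \emph{universal} $J$-covariant representation, whereas the lemma quantifies over all $J$-covariant representations with $\pi_A$ injective; for a general such $\pi$ there is no homomorphism from $C^*(\pi)$ to the Fock model, and the transport argument gives nothing. To be fair, some restriction of this kind is unavoidable: read literally the lemma fails, since for $J\subsetneq J_X$ any $J_X$-covariant representation with injective $\pi_A$ (e.g.\ the universal one for $J_X$) is in particular $J$-covariant, yet satisfies $\pi_A(a)\in\pi_K(\K(X))$ for every $a\in J_X$. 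The paper's own proof, resting on Katsura's corollary, is likewise implicitly about the universal representation, which is the only case in which the lemma is used (in the proposition identifying $B_n\cap B_{n+1}$ with $B_n'$). So your restriction is the right reading of the statement, but it should be made explicit, and the central Fock-model identity must actually be established rather than granted --- as it stands, the heart of the forward direction is missing.
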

\begin{proof}
For $a \in A$, assume that  $\pi_A(a) \in \pi_K(\K(X))$. 
By Lemma \ref{lem:J1}, we have $a \in J_X$ and $\pi_A(a) = 
\pi_K(\phi(a))$.

By Katsura \cite{Kat3} Corollary 11.4, if $(\pi_A,\pi_X)$ is a 
representation
of  $(X,\phi)$ satisfying the equation (\ref{eq:J-covariance}), we have
\[
  \{a \in A \,|\, \phi(a) \in \K(X),\, \pi_A(a)= \pi_K(\phi(a))\} = J.
\]
This shows the conclusion.
\end{proof}

We define subalgebras $B_n$ $n \ge 1$ and $B_0$ by
\[
  B_n = \pi_K^{(n)}(\K(X^{\otimes n})), \quad B_0 = \pi_A(A).
\]
These are \cst -subalgebras of $\cO_X(J)$.
We put
\[
  \F^{(n)} = B_0 + B_1 + \cdots + B_n.
\]
For integers $n$, $i$ we introduce the notation $(n,i)$ by
\[
  (n,i) =
  \begin{cases}
    n-i \qquad n \ge 1,\,i \ge 1  \\
    n-1 \qquad n \ge 1,\,i=0 \\
    0  \qquad n=0,\, i=0.
  \end{cases}
\]
Let $k \in \K(X^{\otimes i})$ $i\ge 1$.
For $\xi_1 \in X^{\otimes i}$, $\xi_2 \in X^{\otimes n-i}$, we define
$k \otimes id_{(n,i)}$ by
\[
  (k \otimes id_{(n,i)})(\xi_1 \otimes \xi_2)
  = k \xi_1 \otimes \xi_2.
\]
Then $k \otimes id_{(n,i)}$ is an element of $\cL(X^{\otimes n})$.
The notation $a \otimes id_{(n,0)}$ means
$\phi(a)\otimes id_{n-1}$.  When $n=0$, $a \otimes id_{(0,0)}$ is
a left multiplication representation of $A$ on a Hilbert $A$-module $A$.

\begin{lemma} \label{lem:core}
For each $m$,  $B_m$ is an ideal in $\F^{(m)}$, and $\F^{(m)}$ is a
\cst -subalgebra.
\end{lemma}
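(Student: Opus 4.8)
The plan is to prove both assertions simultaneously by induction on $m$, the engine being an explicit computation of the products $B_iB_m$ and $B_mB_i$ for $0\le i\le m$. First I would record the multiplication rules among the $B_n$. A typical spanning element of $B_n$ ($n\ge 1$) has the form $\pi_X^{(n)}(\xi)\pi_X^{(n)}(\eta)^*=\pi_K^{(n)}(\theta_{\xi,\eta})$, and for $B_0$ it is $\pi_A(a)$. Using the covariance relations together with the factorization $\pi_X^{(m)}(\xi'\otimes\xi'')=\pi_X^{(i)}(\xi')\pi_X^{(m-i)}(\xi'')$, I would compute, for $i\le m$ and simple tensors $\xi_2=\xi_2'\otimes\xi_2''$ with $\xi_2'\in X^{\otimes i}$,
\[
\pi_X^{(i)}(\xi_1)\pi_X^{(i)}(\eta_1)^*\,\pi_X^{(m)}(\xi_2'\otimes\xi_2'')\pi_X^{(m)}(\eta_2)^* = \pi_X^{(m)}\big(\xi_1\otimes(\phi_{m-i}((\eta_1|\xi_2')_A)\xi_2'')\big)\pi_X^{(m)}(\eta_2)^*,
\]
where $\phi_{m-i}$ denotes the left action of $A$ on $X^{\otimes(m-i)}$; here the middle factor $\pi_X^{(i)}(\eta_1)^*\pi_X^{(i)}(\xi_2')$ collapses to $\pi_A((\eta_1|\xi_2')_A)$ by the first covariance relation. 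The right-hand side lies in $B_m$. Since simple tensors are dense in the internal tensor product $X^{\otimes m}\cong X^{\otimes i}\otimes_A X^{\otimes(m-i)}$ and multiplication is continuous, this gives $B_iB_m\subseteq B_m$, and taking adjoints (each $B_n$ is self-adjoint) gives $B_mB_i\subseteq B_m$. The case $i=0$ is the relation $\pi_A(a)\pi_X^{(m)}(\xi_2)=\pi_X^{(m)}(\phi_m(a)\xi_2)$, and $i=m$ is immediate since $B_m$ is already a $C^*$-algebra.

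Granting the multiplication rules, the ideal property follows formally: once $\F^{(m)}$ is known to be an algebra, $\F^{(m)}B_m=\sum_{i\le m}B_iB_m\subseteq B_m$ and likewise $B_m\F^{(m)}\subseteq B_m$, so $B_m$ is a two-sided ideal. The substantive point is closedness of the algebraic sum, and for this I would argue by induction. For $m=0$, $\F^{(0)}=B_0=\pi_A(A)$ is a $C^*$-subalgebra because $\pi_A$ is injective. Assuming $\F^{(m-1)}$ is a $C^*$-subalgebra, set $C_m:=\overline{\F^{(m)}}$, a $C^*$-subalgebra of $\cO_X(J)$. By the multiplication rules and continuity of multiplication, $B_m$ is a closed two-sided ideal of $C_m$. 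Now I would invoke the standard fact that the sum of a $C^*$-subalgebra and a closed ideal is again a $C^*$-subalgebra: with the quotient map $q\colon C_m\to C_m/B_m$, the image $q(\F^{(m-1)})$ is a $C^*$-subalgebra (a $*$-homomorphic image of a $C^*$-algebra has closed range), so $\F^{(m-1)}+B_m=q^{-1}(q(\F^{(m-1)}))$ is closed and $*$-closed, hence a $C^*$-subalgebra of $C_m$.

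Finally I would identify $\F^{(m-1)}+B_m$ with $\F^{(m)}$: as algebraic sums they coincide, so $\F^{(m)}$ is a closed $C^*$-subalgebra, and being dense in $C_m$ it equals $C_m$. Thus $B_m$ is an ideal of $\F^{(m)}=C_m$, which completes the induction. The main obstacle is precisely this closedness: the $B_n$ are mutually non-commuting $C^*$-subalgebras whose algebraic sum is not obviously closed, and it is the combination of the ideal structure supplied by the product computation with the quotient argument that forces closure. The product computation itself, while routine, must be handled carefully at the level of simple tensors and then extended by density, since general elements of $X^{\otimes m}$ are not elementary tensors.
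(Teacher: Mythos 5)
Your proof is correct and takes essentially the same route as the paper: the key multiplication rule $B_iB_m\subseteq B_m$ is exactly Katsura's Lemma 5.4 (\cite{Kat2}), which the paper simply cites, while you reprove it by hand on rank-one operators and simple tensors and extend by density, and your $i=0$ case matches the paper's separate treatment of $m=0$ via the left action. The closedness of $\F^{(m)}$ --- obtained through the standard quotient-map argument showing that a $C^*$-subalgebra plus a closed two-sided ideal is closed --- is the detail the paper leaves implicit, and you supply it correctly by induction.
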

\begin{proof}
We assume $1 \le m \le n$, $k \in \K(X^{\otimes
  m})$, $k' \in \K(X^{\otimes n})$.  Since $k \otimes id_{(n,m)} \in
  \cL(X^{\otimes n})$, we have
$(k \otimes id_{(n,m)})k' \in \K(X^{\otimes n})$.  By Katsura
  \cite{Kat2} Lemma 5.4, we have
\[
  \pi_K^{(m)}(k) \pi_K^{(n)}(k') = \pi_K^{(n)}((k \otimes id_{(n,m)})k').
\]
This shows that $B_n$ is an ideal in $\F^{(n)}$.  We can check the case 
$m=0$
separately.
\end{proof}

We need to investigate $\F^{(n-1)}\cap B_n$ for a proof of the
theorem of constructing KMS states.

Note that $(X^{\otimes n}J)_A$ is a right $A$-submodule of $X_A$.
By considering the embedding of "rank-one"  operators, 
we may have an inclusion 
$\K(X^{\otimes n}J) \subset \K(X^{\otimes n})$, and we have
$\theta_{\xi \cdot j, \eta \cdot j'} \in K(X^{\otimes n}J)$ for
$j$, $j' \in J$.

\begin{lemma}
An element $k \in \K(X^{\otimes n})$ is in  $\K(X^{\otimes n}J)$
if and only if
\[
  (\xi | k \eta)_A \in J \qquad \forall \xi, \eta \in X^{\otimes n}.
\]
\end{lemma}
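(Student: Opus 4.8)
The plan is to abbreviate $Z = X^{\otimes n}$ and $Y = X^{\otimes n}J$, and to begin by recording the basic description of the submodule $Y$ that both directions rest on: $Y = \{\zeta \in Z : (\zeta|\zeta)_A \in J\}$, and moreover $(\xi|\zeta)_A \in J$ whenever $\zeta \in Y$ and $\xi \in Z$. Both facts come from an approximate unit $(e_\lambda)$ of $J$: if $\zeta = \xi j$ with $j \in J$ then $(\zeta|\zeta)_A = j^*(\xi|\xi)_A j \in J$ and $(\xi'|\zeta)_A = (\xi'|\xi)_A j \in J$, and these persist under limits; conversely, if $(\zeta|\zeta)_A \in J$ then $\|\zeta - \zeta e_\lambda\|^2 = \|(\zeta - \zeta e_\lambda \mid \zeta - \zeta e_\lambda)_A\| \to 0$, so $\zeta = \lim_\lambda \zeta e_\lambda \in Y$.

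The forward implication is then immediate. For $\zeta,\zeta' \in Y$ and $\xi,\eta \in Z$ one computes $(\xi \mid \theta_{\zeta,\zeta'}\eta)_A = (\xi|\zeta)_A (\zeta'|\eta)_A \in A\cdot J \subseteq J$, since $(\zeta'|\eta)_A \in J$. As $k \mapsto (\xi|k\eta)_A$ is norm-continuous and $J$ is closed, the membership $(\xi|k\eta)_A \in J$ passes to every $k$ in the closed span $\K(Y)$.

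For the converse, the substantial direction, I would first convert the inner-product hypothesis into a mapping condition. Taking $\xi = k\eta$ gives $(k\eta|k\eta)_A \in J$, hence $k\eta \in Y$; applying the hypothesis to $k^*$ (using $J = J^*$ and $(k^*\xi \mid k^*\xi)_A = (\xi \mid kk^*\xi)_A$) gives likewise $k^*(Z) \subseteq Y$. So it suffices to show: if $k \in \K(Z)$ satisfies $k(Z) \subseteq Y$ and $k^*(Z) \subseteq Y$, then $k \in \K(Y)$. Since the operators with this property form a $C^*$-subalgebra of $\K(Z)$, spanned by its positive elements, and $\K(Y)$ is a closed linear subspace of $\K(Z)$, it is enough to treat a positive $k = h \ge 0$ with $h(Z) \subseteq Y$.

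The key step, and the one I expect to be the main obstacle, is to manufacture finite-rank approximants of $h$ whose \emph{both} legs lie in $Y$; a naive finite-rank approximant has legs merely in $Z$, and since $Y$ need not be complemented in $Z$ one cannot simply project onto it. I would circumvent this by functional calculus. Fix $\delta > 0$, put $h_\delta = (h-\delta)_+$ (so $\|h - h_\delta\| \le \delta$) and let $w = g(h)^2$ where $g = \sqrt{f}$, $f(t) = (t-\delta)_+/t$ and $f(0)=0$, so that $h_\delta = w^{1/2}\, h\, w^{1/2}$. Because $w^{1/2} = g(h)$ with $g(0)=0$, it is a norm-limit of polynomials in $h$ with vanishing constant term, each of which maps $Z$ into $h(Z) \subseteq Y$; as $Y$ is closed, $w^{1/2}(Z) \subseteq Y$. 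Now approximate $h$ in norm by a finite-rank $F = \sum_i \theta_{a_i,b_i}$; conjugating and using that $w^{1/2}$ is self-adjoint gives $w^{1/2} F w^{1/2} = \sum_i \theta_{w^{1/2}a_i,\, w^{1/2}b_i}$, an operator with both legs in $Y$, hence an element of $\K(Y)$, while $\|h_\delta - w^{1/2}Fw^{1/2}\| = \|w^{1/2}(h-F)w^{1/2}\| \le \|h-F\|$. Letting $F \to h$ yields $h_\delta \in \K(Y)$, and letting $\delta \to 0$ yields $h \in \K(Y)$, which completes the argument.
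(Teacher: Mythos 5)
Your proof is correct, but it takes a genuinely different route from the paper's. The paper reduces everything to Katsura's quotient-module machinery: writing $Z = X^{\otimes n}$, it uses the quotient $Z_J = Z/ZJ$ and cites Katsura's result that the induced map $k \mapsto [k]_J$ from $\K(Z)$ onto $\K(Z_J)$ has kernel exactly $\K(ZJ)$; the lemma then follows from the one-line identity $[(\xi\,|\,k\eta)_A]_J = ([\xi]_J\,|\,[k]_J[\eta]_J)_{A/J}$, since vanishing of all these pairings forces $[k]_J = 0$. You instead prove the hard kernel-identification from scratch: you first translate the inner-product hypothesis into the mapping condition $k(Z) \subseteq ZJ$ and $k^*(Z) \subseteq ZJ$ (via $\xi = k\eta$ and the characterization $ZJ = \{\zeta : (\zeta|\zeta)_A \in J\}$, correctly established with an approximate unit of $J$), and then show any such $k$ lies in $\K(ZJ)$ by the functional-calculus trick $h_\delta = (h-\delta)_+ = w^{1/2} h\, w^{1/2}$ with $w^{1/2} = g(h)$, $g(0)=0$, which lets you conjugate a finite-rank approximant so that both legs land in $ZJ$; this neatly sidesteps the genuine obstacle that $ZJ$ need not be complemented in $Z$. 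The estimate $\|w^{1/2}(h-F)w^{1/2}\| \le \|h-F\|$ uses $\|g\|_\infty \le 1$, which holds since $0 \le (t-\delta)_+/t \le 1$ — worth stating explicitly. What each approach buys: the paper's argument is short but outsources the substantial content to Katsura \cite{Kat3} Lemma 1.6, whereas yours is self-contained, more elementary, and in fact establishes the stronger general fact $\K(ZJ) = \{k \in \K(Z) : kZ \cup k^*Z \subseteq ZJ\}$, at the cost of a longer argument.
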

\begin{proof}
We refer Fowler-Muhly-Raeburn \cite{FMR} Lemma 1.6 and Katsura \cite{Kat3}
for quotient modules $X_J$.  
The notation $[a]_J \in A/J$ for an element $a$ of
a \cst -algebra $A$ means the quotient image of $a \in A$ by $J$.

Since $T \in \cL(X)$ leaves $XJ$ invariant, we can consider 
an operator $[T]_J \in  
 X/XJ=X_J$. 
The map $k \in K(X^{\otimes n}) \to [k]_J$  is an onto map from 
$K(X^{\otimes n})$
to $K(X^{\otimes n}{}_J)$, and its kernel is $k$'s such that
$k \in \K(X^{\otimes n}J)$ (\cite{Kat3} Lemma 1.6).  Then $k \in 
\K(X^{\otimes n})$
is contained in $\K(X^{\otimes n}J)$ if and only if $[k]_J=0$.
Moreover we have
\begin{align*}
  ( \xi | k\eta)_A \in J   \quad &\text{if and only if} \quad
    [(\xi |k \eta)_A]_{J} = 0 \\
   \quad &\text{if and only if} \quad  ([\xi]_J | [k]_J[\eta]_J)_{A/J} = 
0.
\end{align*}
If it holds for each $\xi$, $\eta$, then we have $[k]_J =0$, and
this means $k \in \K(X^{\otimes n}J)$.
\end{proof}

We put $B_n' = \pi_K^{(n)}(\K(X^{\otimes n}J))$, $(n \ge 1)$ and
$B_0' = \pi_A(J)$.
For the case $J = J_X$, 
the following Lemma is presented in Katsura \cite{Kat2}.  It also holds
for the case $J \subset J_X$. 

The following lemmas are T.Katsura \cite{Kat2} Lemma 5.10 and
T. Katsura \cite{Kat2} Proposition 5.11 for the case $J=J_X$.
The proof for general cases is the same as the case $J=J_X$.

\begin{lemma}\label{lem:approximate_unit}
Let $k \in \K(X^{\otimes n+1})$. Then  for an approximate unit
$\{u_{\lambda }\}_{\lambda \in  \Lambda}$ in $\K(X^{\otimes n})$,
we have that $k = \lim_{\lambda \in \Lambda}(u_{\lambda} \otimes id_1)k$.
\end{lemma}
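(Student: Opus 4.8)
The plan is to use a standard three-step reduction: pass from a general compact operator $k$ to rank-one operators $\theta_{\zeta,\omega}$, then to simple tensors in the left variable, and finally reduce everything to the nondegeneracy of the action of $\K(X^{\otimes n})$ on $X^{\otimes n}$. First I would record that for the internal tensor product $X^{\otimes n+1} = X^{\otimes n}\otimes_\phi X$ the assignment $T \mapsto T\otimes id_1$ is a $*$-homomorphism $\cL(X^{\otimes n}) \to \cL(X^{\otimes n+1})$, hence contractive; in particular $\|u_\lambda \otimes id_1\| \le \|u_\lambda\|$, so the net $\{u_\lambda \otimes id_1\}$ is uniformly bounded since a bounded approximate unit is norm-bounded. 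Because $\K(X^{\otimes n+1})$ is the closed linear span of the $\theta_{\zeta,\omega}$ and the operators $u_\lambda \otimes id_1$ are uniformly bounded, a routine $\varepsilon/3$-argument reduces the claim to proving $(u_\lambda \otimes id_1)\theta_{\zeta,\omega} \to \theta_{\zeta,\omega}$ for each fixed rank-one operator.

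Next, since $\|\theta_{\zeta,\omega} - \theta_{\zeta',\omega}\| \le \|\zeta - \zeta'\|\,\|\omega\|$ and the simple tensors $\xi \otimes \eta$ with $\xi \in X^{\otimes n}$, $\eta \in X$ are dense in $X^{\otimes n+1}$, I may assume $\zeta = \xi \otimes \eta$. Using $(u_\lambda \otimes id_1)(\xi \otimes \eta) = (u_\lambda\xi)\otimes\eta$, a one-line computation gives $(u_\lambda \otimes id_1)\theta_{\xi\otimes\eta,\,\omega} = \theta_{(u_\lambda\xi)\otimes\eta,\,\omega}$, and therefore
\[
  \|(u_\lambda \otimes id_1)\theta_{\xi\otimes\eta,\omega} - \theta_{\xi\otimes\eta,\omega}\|
  = \|\theta_{(u_\lambda\xi - \xi)\otimes\eta,\,\omega}\|
  \le \|u_\lambda\xi - \xi\|\,\|\eta\|\,\|\omega\|,
\]
where I use $\|\theta_{a,b}\| \le \|a\|\|b\|$ together with $\|v\otimes\eta\| \le \|v\|\|\eta\|$. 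Thus everything comes down to showing $u_\lambda\xi \to \xi$ in $X^{\otimes n}$ for each $\xi$.

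The main obstacle — indeed the only genuinely non-formal point — is this last convergence, which expresses the nondegeneracy of the action of the compacts on the module. I would prove it directly from the identity $\|v\|^2 = \|\theta_{v,v}\|$ valid for every $v$ in a Hilbert module. Setting $T = \theta_{\xi,\xi} \in \K(X^{\otimes n})$ and using $\theta_{Sa,\,Tb} = S\,\theta_{a,b}\,T^*$, I expand
\[
  \theta_{u_\lambda\xi - \xi,\ u_\lambda\xi - \xi}
  = u_\lambda T u_\lambda^* - u_\lambda T - T u_\lambda^* + T,
\]
and since $\{u_\lambda\}$ is a bounded approximate unit for $\K(X^{\otimes n})$ and $T \in \K(X^{\otimes n})$, the terms converge in norm so that the sum tends to $T - T - T + T = 0$. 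Consequently $\|u_\lambda\xi - \xi\|^2 = \|\theta_{u_\lambda\xi - \xi,\,u_\lambda\xi - \xi}\| \to 0$, which is exactly what was needed. This convergence is the substance of the argument (equivalently, Katsura \cite{Kat2} Lemma 5.10), while the reductions above are formal; the same reasoning applies verbatim for an arbitrary ideal $J \subset J_X$.
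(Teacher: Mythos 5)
Your proof is correct. The paper gives no argument of its own for this lemma --- it simply cites Katsura \cite{Kat2} Lemma 5.10 and remarks that the proof for general $J \subset J_X$ is identical --- and your reduction to rank-one operators on simple tensors together with the nondegeneracy computation $\theta_{u_\lambda\xi-\xi,\,u_\lambda\xi-\xi} = u_\lambda T u_\lambda^* - u_\lambda T - T u_\lambda^* + T \to 0$ (with $T=\theta_{\xi,\xi}$, using $\|v\|^2 = \|\theta_{v,v}\|$ and the boundedness of the approximate unit) is precisely the standard argument behind that citation, so in substance the approaches coincide.
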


\begin{lemma}\label{lem:subset}
We have that $\F^{(n)} \cap B_{n+1} \subset B_n$.
\end{lemma}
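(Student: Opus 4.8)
The plan is to take an arbitrary element $x \in \F^{(n)} \cap B_{n+1}$ and show directly that $x \in B_n$, exploiting the ideal structure from \lemref{lem:core} together with the approximate-unit factorization of \lemref{lem:approximate_unit}. The case $n=0$ is immediate, since $\F^{(0)} = B_0$ gives $\F^{(0)} \cap B_1 = B_0 \cap B_1 \subset B_0$, so I assume $n \ge 1$.

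First I would use $x \in B_{n+1}$ to write $x = \pi_K^{(n+1)}(k)$ for some $k \in \K(X^{\otimes n+1})$. Fixing an approximate unit $\{u_\lambda\}_{\lambda \in \Lambda}$ in $\K(X^{\otimes n})$, \lemref{lem:approximate_unit} gives $k = \lim_\lambda (u_\lambda \otimes id_1)k$ in $\K(X^{\otimes n+1})$. Since $\pi_K^{(n+1)}$ is a $*$-homomorphism, hence continuous, this yields $x = \lim_\lambda \pi_K^{(n+1)}\!\big((u_\lambda \otimes id_1)k\big)$.

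The key step is to recognize each approximating term as a product that visibly lands in $B_n$. Applying the multiplication formula of Katsura used in the proof of \lemref{lem:core}, with the roles $m=n$ and the second tensor power equal to $n+1$, so that $(n+1,n) = 1$ and $u_\lambda \otimes id_{(n+1,n)} = u_\lambda \otimes id_1$, I obtain
\[
  \pi_K^{(n+1)}\!\big((u_\lambda \otimes id_1)k\big)
  = \pi_K^{(n)}(u_\lambda)\,\pi_K^{(n+1)}(k)
  = \pi_K^{(n)}(u_\lambda)\, x .
\]
Now $\pi_K^{(n)}(u_\lambda) \in B_n$ while $x \in \F^{(n)}$ by hypothesis, so \lemref{lem:core} (that $B_n$ is an ideal in $\F^{(n)}$) forces $\pi_K^{(n)}(u_\lambda)\, x \in B_n$. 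Passing to the limit and using that $B_n$, being the image of a \cst -algebra under a $*$-homomorphism, is closed, I conclude $x = \lim_\lambda \pi_K^{(n)}(u_\lambda)\, x \in B_n$.

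I expect the only point requiring care to be the index bookkeeping in the multiplication formula: one must check that the factor peeled off on the left is exactly $\pi_K^{(n)}(u_\lambda) \in B_n$, and not an element of some other $B_j$. This hinges solely on reading the convention $(n+1,n)=1$ correctly. Once the indices are matched, the argument is a routine combination of the approximate-unit limit with the fact that $B_n$ is a closed ideal in $\F^{(n)}$.
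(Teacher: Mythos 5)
Your proof is correct and coincides with the argument the paper relies on: the paper gives no proof of its own, deferring to Katsura \cite{Kat2} Proposition 5.11, and your argument --- writing $x=\lim_\lambda \pi_K^{(n)}(u_\lambda)x$ via \lemref{lem:approximate_unit} and Katsura's multiplication formula with $(n+1,n)=1$, then invoking that $B_n$ is a closed ideal in $\F^{(n)}$ (\lemref{lem:core}) --- is precisely that proof, with the $n=0$ case correctly disposed of separately.
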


\begin{prop} We have  that $B_n \cap B_{n+1} = B_n'$ and 
$\F^{(n)} \cap B_{n+1} = B_n'$.
\end{prop}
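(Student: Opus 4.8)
The plan is to prove the single identity $B_n \cap B_{n+1} = B_n'$ and then read off the statement about $\F^{(n)}$ at no extra cost. Since $B_n \subseteq \F^{(n)}$ we automatically have $B_n \cap B_{n+1} \subseteq \F^{(n)} \cap B_{n+1}$, while \lemref{lem:subset} gives $\F^{(n)} \cap B_{n+1} \subseteq B_n$ and therefore $\F^{(n)} \cap B_{n+1} \subseteq B_n \cap B_{n+1}$. Thus the two intersections coincide, and everything reduces to identifying $B_n \cap B_{n+1}$ with $B_n'$.

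For the inclusion $B_n' \subseteq B_n \cap B_{n+1}$, I would argue on generators. As $B_n' \subseteq B_n$ is clear, the content is $B_n' \subseteq B_{n+1}$. Because $\K(X^{\otimes n}J)$ is the closed linear span of the $\theta_{\xi j,\eta j'}$ with $\xi,\eta \in X^{\otimes n}$ and $j,j' \in J$, and $\pi_K^{(n)}$ is continuous, it suffices to treat $\pi_K^{(n)}(\theta_{\xi j,\eta j'}) = \pi_X^{(n)}(\xi)\pi_A(j)\pi_A(j')^*\pi_X^{(n)}(\eta)^* = \pi_X^{(n)}(\xi)\pi_A(jj'^*)\pi_X^{(n)}(\eta)^*$. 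Here $jj'^* \in J$, so $J$-covariance gives $\pi_A(jj'^*) = \pi_K(\phi(jj'^*))$ with $\phi(jj'^*) \in \K(X)$. Approximating $\phi(jj'^*)$ in norm by finite sums $\sum_i \theta_{a_i,b_i}$ and using $\pi_X^{(n)}(\xi)\pi_X(a_i) = \pi_X^{(n+1)}(\xi \otimes a_i)$, each term becomes $\pi_K^{(n+1)}(\theta_{\xi \otimes a_i,\eta \otimes b_i}) \in B_{n+1}$; passing to the limit (with $B_{n+1}$ closed) places $\pi_K^{(n)}(\theta_{\xi j,\eta j'})$ in $B_{n+1}$.

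The reverse inclusion $B_n \cap B_{n+1} \subseteq B_n'$ is the step I expect to be the main obstacle, and I would attack it by compression. Let $x = \pi_K^{(n)}(k)$ with $k \in \K(X^{\otimes n})$, and suppose also $x \in B_{n+1}$; fix $\xi,\eta \in X^{\otimes n}$. From the degree-$n$ description one computes $\pi_X^{(n)}(\xi)^* x\, \pi_X^{(n)}(\eta) = \pi_A((\xi \mid k\eta)_A)$. On the other hand, writing $x$ as a norm-limit of sums of $\pi_X^{(n+1)}(\zeta \otimes s)\pi_X^{(n+1)}(\omega \otimes t)^*$ with $\zeta,\omega \in X^{\otimes n}$ and $s,t \in X$, and using $\pi_X^{(n)}(\xi)^*\pi_X^{(n)}(\zeta) = \pi_A((\xi \mid \zeta)_A)$, each compressed summand equals $\pi_A((\xi\mid\zeta)_A)\,\pi_K(\theta_{s,t})\,\pi_A((\omega\mid\eta)_A)$, which lies in $B_0B_1B_0 \subseteq B_1$ since $B_1$ is an ideal in $\F^{(1)}$ by \lemref{lem:core}. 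As $B_1$ is closed, the whole compression lies in $B_1 = \pi_K(\K(X))$. Hence $\pi_A((\xi \mid k\eta)_A) \in \pi_K(\K(X))$, so \lemref{lem:J2} forces $(\xi \mid k\eta)_A \in J$. Since $\xi,\eta \in X^{\otimes n}$ are arbitrary, the preceding lemma characterizing $\K(X^{\otimes n}J)$ yields $k \in \K(X^{\otimes n}J)$, i.e. $x \in B_n'$.

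The crux is the compression computation: the key is to exploit the factorization $X^{\otimes n+1} = X^{\otimes n}\otimes X$ so that squeezing a degree-$(n+1)$ element between degree-$n$ vectors strips exactly one tensor leg and deposits the result in $B_1$, after which \lemref{lem:J2} converts the membership $\pi_A((\xi\mid k\eta)_A) \in \pi_K(\K(X))$ into the ideal condition $(\xi\mid k\eta)_A \in J$. The degenerate case $n=0$ is not covered by this argument, since there is no degree-$n$ vector to compress against; but there $B_0 \cap B_1 = \pi_A(J) = B_0'$ is immediate from \lemref{lem:J2} together with the definition of $B_0'$.
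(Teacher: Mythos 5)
Your proposal is correct and takes essentially the same route as the paper: $J$-covariance applied to the generators $\theta_{\xi j,\eta j'}$ gives $B_n' \subseteq B_n \cap B_{n+1}$, the compression identity $\pi_X^{(n)}(\xi)^* x\, \pi_X^{(n)}(\eta) = \pi_A((\xi \mid k\eta)_A) \in B_1$ together with \lemref{lem:J2} and the characterization of $\K(X^{\otimes n}J)$ gives the reverse inclusion, \lemref{lem:subset} reduces the $\F^{(n)}$ statement to $B_n \cap B_{n+1}$, and $n=0$ is handled separately via \lemref{lem:J2}. The only difference is presentational: you make explicit the rank-one approximations (for why $\pi_X^{(n)}(\xi)B_1\pi_X^{(n)}(\eta)^* \subseteq B_{n+1}$ and why the compression of an element of $B_{n+1}$ lands in $B_1$) that the paper leaves implicit.
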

\begin{proof} First we show that $B_n \cap B_{n+1}=B_{n}'$.
This is  Katsura \cite{Kat2} Proposition 5.9 for $J=J_X$.
Let $n=0$.  Then we have
\[
  \pi_A(A) \cap B_1 = \pi_A(A) \cap \pi_K(\K(X)),
\]
By Lemma \ref{lem:J2}, we have  $\pi_A(A) \cap B_1 = \pi_A(J)$.
Moreover the proposition holds for $n=0$ because $B_0' = \pi_A(J)$.
We may assume $n \ge 1$.  Let $a$, $b \in J$, $\xi$, $\eta \in X^{\otimes 
n}$.
Then we have $\pi_A(a) \in B_1 = \pi_K(\K(X))$, and
\begin{align*}
  \pi_K^{(n)}(\theta_{\xi a,\eta b}) & = \pi_X^{(n)}(\xi a)
  \pi_X^{(n)}(\eta b)^* \\
   & = \pi_X^{(n)}(\xi) \pi_A(a)\pi_A(b)^* \pi_X^{(n)}(\eta)^*.
\end{align*}
Since $\pi_A(a)\pi_A(b)^* \in B_1$, the left hand side is contained in
$B_{n+1}$.  Then we have
\[
  B_n' \subset B_n \cap B_{n+1}.
\]

Let $x \in B_n \cap B_{n+1}$.  There exists $k \in \K(X^{\otimes n})$
such that $\pi_K^{(n)}(k) =x$.  For $\xi$, $\eta \in X^{\otimes n}$,
we have
\begin{align*}
  \pi_A((\xi|k\eta)_A)
  & = \pi_X^{(n)}(\xi)^* \pi_K^{(n)}(k) \pi_X^{(n)}(\eta) \\
  & = \pi_X^{(n)}(\xi)^* x \pi_X^{(n)}(\eta).
\end{align*}
By $x \in B_{n+1}$, the last expression is contained in $B_1$.
Since for  $\xi$,  $\eta \in X^{\otimes n}$,
$\pi_A((\xi|k\eta)_A)\in B_1$, we have $(\xi|k \eta)_A \in J$
for  $\xi$,  $\eta \in X^{\otimes n}$ by Lemma \ref{lem:J2}.
  Then we have
$k \in K(X^{\otimes n}J)$, and we have $x=\pi_K^{(n)}(k) \in B_n'$.

Lastly,  we shall show that $\F^{(n)} \cap B_{n+1} = B_n'$.
By Lemma \ref{lem:subset}, $\F^{(n)}\cap B_{n+1} \subset B_n$. We 
have
\begin{align*}
  \F^{(n)} \cap B_{n+1} & = (\F^{(n)} \cap B_{n+1}) \cap B_n 
                         = (\F^{(n)} \cap B_n) \cap B_{n+1} \\
                        & = B_n \cap B_{n+1} 
                         = B_n'.
\end{align*}
This completes the  proof. 
\end{proof}

\section{KMS states on relative Cuntz-Pimsner algebras}
In this section, we generalize a theorem of the construction of KMS states
of Cuntz-Pimsner algebras in Laca-Neshevyev \cite{LN} to relative 
Cuntz-Pimsner algebras.

Let $A$ be a $\sigma$-unital \cst -algebra and $X$ be a countably
generated Hilbert $A$-module.

\begin{defn}
A sequence $\{u_i\}_{i=1}^{\infty}$ of a Hilbert (right) $C^*$-module 
$X$ over $A$ is called
a countable basis (or normalized tight frame) of $X$  if
\begin{equation} \label{eq:basis1}
  x = \sum_{i=1}^{\infty} u_i(u_i|x)_A
\end{equation}
for each $x \in X$, where the right hand side converges in norm. 
\end{defn}

As in Remark after \cite{KPW2} Proposition 1.2 ,
\begin{equation}\label{remark}
\text{For }  a, b \in \K(X), x \in X 
\text{ with } 0 \le a \le b \le I, \quad
\|x-bx\|^2 \le \|x\|\|x-ax\|. 
\end{equation}
This inequality implies that the right hand side of (\ref{eq:basis1}) 
converges unconditionally in the following sense: 
For every $\varepsilon > 0$, there exists a finite subset $F_0$
of ${\mathbb N}$ such that for every finite subset $F$ of ${\mathbb N}$ with
$F_0 \subset F$ we have
\[
   \|x - \sum_{i\in F}u_i(u_i|x)_A \| < \varepsilon.
\]
Since $\sum_{i \in F}\theta_{u_i,u_i}  \le I$ for each finite subset
$F \in {\mathbb N}$, it is sufficient to prove (\ref{eq:basis1}) for each $x$
in some norm dense subset of $X$.
We often write it as $x = \sum_{i\in {\mathbb N}}u_i(u_i|x)_A $ to 
express unconditinally convergence. More generally, 
for any countable set $\Omega$,  the notation 
\begin{equation} \label{eq:basis}
   x = \sum_{i\in \Omega}u_i(u_i|x)_A.
\end{equation}
makes sense as  unconditional convergence. 

We can show the following Lemma: 

\begin{lemma} Let $A$ be a \cst -algebra, $Y$ a \cst -correspondence
over $A$ and $X$ a Hilbert $A$-module.  Let $\{u_i\}_{i \in \Omega_1}$
be a countable basis of $X$ and $\{v_j\}_{j \in \Omega_2}$ a countable
basis of $Y$.  Then $\{u_i \otimes v_j\}_{(i,j)
\in \Omega_1 \times \Omega_2}$ is a countable basis of the inner tensor
product module $X \otimes_A Y$ of $X$ and $Y$.
\end{lemma}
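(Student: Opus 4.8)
The plan is to reduce the statement, by the density and unconditional–convergence reasoning already recorded after inequality (\ref{remark}), to two tasks: showing that the finite partial–sum operators built from $\{u_i\otimes v_j\}$ are dominated by the identity, and checking the reconstruction identity (\ref{eq:basis}) on the dense set of elementary tensors $x\otimes y$. The algebraic span of the elementary tensors is norm dense in $X\otimes_A Y$, so once the partial–sum bound is in place the argument of the Remark shows that it suffices to treat $z=x\otimes y$.

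The main tool I would introduce is, for each $i\in\Omega_1$, the creation operator $T_{u_i}\colon Y\to X\otimes_A Y$, $T_{u_i}(y)=u_i\otimes y$. Testing against the inner product of $X\otimes_A Y$, namely $(u_i\otimes w\,|\,x\otimes y)_A=(w\,|\,(u_i|x)_A\,y)_A$, shows that $T_{u_i}$ is adjointable with $T_{u_i}^{*}(x\otimes y)=(u_i|x)_A\,y$ (this is well defined on $X\otimes_A Y$ because it respects the balancing $xa\otimes y=x\otimes\phi(a)y$). Evaluating on an elementary tensor then yields the two factorizations
\[
\theta_{u_i\otimes v_j,\,u_i\otimes v_j}=T_{u_i}\,\theta_{v_j,v_j}\,T_{u_i}^{*},
\qquad
T_{u_i}T_{u_i}^{*}=\theta_{u_i,u_i}\otimes id_Y ,
\]
where $\theta_{u_i,u_i}\otimes id_Y\in\cL(X\otimes_A Y)$ is obtained from $\theta_{u_i,u_i}\in\K(X)$ by tensoring with the identity on $Y$.

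The step I expect to be the main obstacle is the partial–sum bound, because the index set is the product $\Omega_1\times\Omega_2$ and the two summations do not separate directly. Here the factorizations resolve the difficulty. For a finite $F_2\subset\Omega_2$ the frame property of $Y$ gives $0\le\sum_{j\in F_2}\theta_{v_j,v_j}\le I_Y$, whence
\[
\sum_{j\in F_2}\theta_{u_i\otimes v_j,\,u_i\otimes v_j}
=T_{u_i}\Bigl(\sum_{j\in F_2}\theta_{v_j,v_j}\Bigr)T_{u_i}^{*}
\le T_{u_i}T_{u_i}^{*}
=\theta_{u_i,u_i}\otimes id_Y .
\]
Summing over a finite $F_1\subset\Omega_1$ and using that $S\mapsto S\otimes id_Y$ is a positive map $\cL(X)\to\cL(X\otimes_A Y)$ together with $\sum_{i\in F_1}\theta_{u_i,u_i}\le I_X$, I obtain $\sum_{(i,j)\in F_1\times F_2}\theta_{u_i\otimes v_j,\,u_i\otimes v_j}\le I$. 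Since any finite $F\subset\Omega_1\times\Omega_2$ is contained in such a product and all the $\theta$'s are positive, the required inequality $\sum_{(i,j)\in F}\theta_{u_i\otimes v_j,\,u_i\otimes v_j}\le I$ follows.

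Finally I would verify (\ref{eq:basis}) on $z=x\otimes y$ by iterated summation. Summing first over $j$ and applying the basis property of $\{v_j\}$ to $(u_i|x)_A\,y\in Y$, the inner sum collapses to $u_i\otimes(u_i|x)_A\,y=u_i(u_i|x)_A\otimes y$; summing then over $i$ and using the basis property of $\{u_i\}$ gives $\bigl(\sum_i u_i(u_i|x)_A\bigr)\otimes y=x\otimes y$. By the reduction above this shows that $\{u_i\otimes v_j\}_{(i,j)\in\Omega_1\times\Omega_2}$ is a countable basis of $X\otimes_A Y$. I note that neither injectivity nor non-degeneracy of the left action $\phi$ of $A$ on $Y$ enters the argument, so it applies to the correspondences arising from graphs with sinks and sources.
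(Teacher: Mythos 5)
Your proof is correct, and its skeleton matches the paper's: reduce, via the unconditional-convergence remark (\ref{remark}), to verifying the reconstruction identity (\ref{eq:basis1}) on elementary tensors; prove that identity by a two-stage approximation (first a finite $F\subset\Omega_1$ for $x$, then finite subsets of $\Omega_2$ handling each $(u_i|x)_A y$ for the finitely many $i\in F$); then upgrade from the product set to arbitrary finite supersets and pass to the dense span of finite sums of elementary tensors. Your iterated-summation computation is essentially a compressed version of the paper's explicit bookkeeping with $\delta/(2\|y\|)$ and $\delta/(2s\|u_i\|)$. Where you genuinely add something is the partial-sum bound: applying (\ref{remark}) to the family $\{u_i\otimes v_j\}$ requires $\sum_{(i,j)\in H}\theta_{u_i\otimes v_j,\,u_i\otimes v_j}\le I$ in $\cL(X\otimes_A Y)$ for every finite $H$, and the paper invokes the remark at the end of its proof without ever checking this for the tensor family. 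Your factorizations $\theta_{u_i\otimes v_j,\,u_i\otimes v_j}=T_{u_i}\theta_{v_j,v_j}T_{u_i}^{*}$ and $T_{u_i}T_{u_i}^{*}=\theta_{u_i,u_i}\otimes id$, combined with positivity of $S\mapsto S\otimes id$ and the observation that any finite $H$ sits inside a product $F_1\times F_2$, supply exactly this tacitly assumed verification, so your creation-operator route both confirms the paper's argument and makes one of its implicit steps rigorous. Your closing remark that no injectivity or non-degeneracy of the left action $\phi$ is used is also apt, since the paper needs the lemma precisely for correspondences coming from graphs with sinks and sources, where $\phi$ fails to be injective.
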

\begin{proof} Let $\varepsilon >0$.  We fix a nonzero $x \otimes y \in X
\otimes_A Y$.  Let 
$\delta = \varepsilon^2/\|x \otimes y\|$.
be a positive number. 
We take a finite subset $F$ of $\Omega_1$ such that
\[
  \| \sum_{i \in F} u_i(u_i|x)_A -x \| < \frac{\delta}{2\|y\|}.
\]
Put $s$ be the cardinality of $F$.
For each $i$ $(i=1,\dots,s)$ we take a finite subset $G_i \subset \Omega_2$
such that if $G'$ is a finite subset containing $G_i$ then it holds
that
\[
  \|\sum_{j \in G'} v_j(v_j|(u_i|x)_Ay)_A - (u_i|x)_Ay \|
     < \frac{\delta}{2s\|u_i\|}.
\]
Let $G$ be a finite subset containing $\bigcup_{i=1}^{s}G_i$.
Then we have
\begin{align*}
  & \|x \otimes y - \sum_{(i,j)\in F \times G}
   u_i\otimes v_j(u_i \otimes v_j|x \otimes y)_A \| \\
  = & \|x\otimes y - \sum_{i \in F}\sum_{j \in G} u_i
    \otimes v_j(v_j|(u_i|x)_Ay)  \|\\
  \le & \| x \otimes y - \sum_{i \in F} u_i(u_i|x)_A\otimes y \|
   + \| \sum_{i \in F} u_i\otimes (u_i|x)_Ay
      - \sum_{i \in F} \sum_{j \in G} u_i \otimes v_j(v_j|(u_i|x)_Ay)_A \|\\
   \le & \|x-\sum_{i \in F_0}u_i(u_i|x)_A\|\|y\|
      + \sum_{i \in F}\|u_i\|
   \|\sum_{j \in G}  v_j(v_j|(u_i|x)_Ay)_A - (u_i|x)_Ay  \| \\
   <  & \delta.
\end{align*}
Using (\ref{remark}), for each finite
subset $H$ of $\Omega_1 \times \Omega_2$ such
that $H \supset F\times G$ we have  that
\[
\|x \otimes y - \sum_{(i,j)\in H}
   u_i\otimes v_j(u_i \otimes v_j|x \otimes y)_A \| < \varepsilon.
\]
Hence $x \otimes y = 
\sum_{(i,j)\in \Omega_1 \times \Omega_2}
   u_i\otimes v_j(u_i \otimes v_j|x \otimes y)_A$.  
If $z=\sum_{p \text{ finite}} x_p \otimes y_p$, then 
\[
  \sum_{(i,j) \in \Omega_1 \times \Omega_2} u_i \otimes v_j
     (u_j \otimes v_j | z)_A = z.
\]
Since the subset of elements of the form
$\sum_{p \text{ finite}} x_p \otimes y_p\,$ is
dense in $X \otimes_A Y$, $\{u_i \otimes v_j)\}_{\Omega_1 \times \Omega_2}$
constitute a basis of $X\otimes_A Y$.
\end{proof}

We fix a \cst -correspondence $X$ over a \cst -algebra $A$, and a countable
basis $\{u_i\}_{i=1}^{\infty}$ of $X$.
Let $J$ be a closed two sided ideal of $A$ which is contained in $J_X$. 
  $\O_X(J)$ denotes
the relative Cuntz-Pimsner algebra constructed from $X$ and $J$ in 
section 2.

\begin{lemma}\label{lem:deg} Let $\tau$ be a tracial state on $A$.
Then the possibly infinite positive number $\sup_n 
\sum_{i=1}^{n}\tau((u_i|u_i)_A)$
does not depend on the choice of a countable basis 
$\{u_i\}_{i=1}^{\infty}$.
\end{lemma}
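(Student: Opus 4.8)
The plan is to show that both suprema equal a single doubly-indexed sum that is manifestly symmetric in the two bases. Fix two countable bases $\{u_i\}_{i=1}^{\infty}$ and $\{v_j\}_{j=1}^{\infty}$ of $X$. Since $\tau((u_i|u_i)_A)\ge 0$ for every $i$, the partial sums are nondecreasing, so $\sup_n \sum_{i=1}^{n}\tau((u_i|u_i)_A) = \sum_{i=1}^{\infty}\tau((u_i|u_i)_A)$; it therefore suffices to compare the two possibly infinite series $\sum_i \tau((u_i|u_i)_A)$ and $\sum_j \tau((v_j|v_j)_A)$.

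First I would expand each diagonal term through the other basis. Applying the reconstruction formula $u_i = \sum_j v_j(v_j|u_i)_A$, which converges in norm in $X$, together with continuity and $A$-linearity of the inner product in the second variable, gives
\[
(u_i|u_i)_A = \Big( u_i \,\Big|\, \sum_j v_j(v_j|u_i)_A \Big)_A = \sum_j (u_i|v_j)_A(v_j|u_i)_A,
\]
the series converging in norm in $A$. Applying the bounded functional $\tau$ term by term yields $\tau((u_i|u_i)_A) = \sum_j \tau\big((u_i|v_j)_A(v_j|u_i)_A\big)$. Each summand is nonnegative, since $(u_i|v_j)_A = (v_j|u_i)_A^{\,*}$ forces $(u_i|v_j)_A(v_j|u_i)_A = (v_j|u_i)_A^{\,*}(v_j|u_i)_A \ge 0$ and $\tau$ is positive.

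Next I would invoke the trace property, $\tau\big((u_i|v_j)_A(v_j|u_i)_A\big) = \tau\big((v_j|u_i)_A(u_i|v_j)_A\big)$ for every pair $(i,j)$. Because all terms $\tau\big((v_j|u_i)_A(u_i|v_j)_A\big)$ are nonnegative, the double series $\sum_{i,j}$ is unambiguously defined in $[0,\infty]$ and its value is independent of the order of summation (Tonelli for nonnegative series). Summing the previous identity over $i$ gives
\[
\sum_i \tau((u_i|u_i)_A) = \sum_{i,j} \tau\big((v_j|u_i)_A(u_i|v_j)_A\big).
\]
By the symmetric computation—expanding $(v_j|v_j)_A = \sum_i (v_j|u_i)_A(u_i|v_j)_A$ and summing over $j$—the same double sum equals $\sum_j \tau((v_j|v_j)_A)$. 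Hence the two series coincide, proving independence of the basis.

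The main obstacle is one of justification rather than of idea: one must check that the norm-convergent reconstruction series survives the application of the inner product and then of $\tau$, and that the interchange of the two summations is legitimate. Both points are settled by the nonnegativity of every term, which guarantees norm convergence of the inner-product expansion and licenses the Tonelli-type reordering; the trace property then supplies the single symmetry that makes the diagonal-in-$u$ and diagonal-in-$v$ sums collapse to the same quantity.
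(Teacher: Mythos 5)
Your proof is correct and follows essentially the same route as the paper: expand $u_i$ through the basis $\{v_j\}$, apply the trace property to $\tau\bigl((v_j|u_i)_A^*(v_j|u_i)_A\bigr)$, and interchange the two summations using nonnegativity (the paper phrases this as swapping $\sup_n$ and $\sup_m$ over increasing partial sums, which is exactly your Tonelli argument). Your justification of the termwise application of $\tau$ and of the reordering is, if anything, slightly more explicit than the paper's.
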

\begin{proof} Let $\{v_i\}_{j=1}^{\infty}$ be another countable basis of 
$X$.
Since $u_i = \lim_{m \to \infty} \sum_{j=1}^{m} v_j(v_j|u_i)_A$, 
we have
\begin{align*}
 & \sup_n \sum_{i=1}^{n}\tau((u_i|u_i)_A)
     = \sup_n \lim_{m \to \infty} \sum_{i=1}^{n}\sum_{j=1}^m
         \tau ((v_j(v_j|u_i)_A|u_i)_A) \\
= & \sup_n \sup_m \sum_{i=1}^{n}\sum_{j=1}^m
       \tau((v_j|u_i)_A^* (v_j|u_i)_A)
 = \sup_m \sup_n \sum_{i=1}^{n}\sum_{j=1}^m
          \tau((v_j|u_i)_A (v_j|u_i)_A^* ) \\
= & \sup_m \lim_{n \to \infty} \sum_{i=1}^{n}\sum_{j=1}^m
       \tau ( (v_j|u_i)_A (u_i|v_j)_A )
 = \sup_m \lim_{n \to \infty}\sum_{j=1}^m \sum_{i=1}^{n}
       \tau((v_j|u_i(u_i|v_j)_A)_A) \\
= & \sup_m \sum_{j=1}^m   \tau((v_j|v_j)_A).
\end{align*}
\end{proof}
Therefore  we may put $d_{\tau} =  \sup_n \sum_{i=1}^{n}\tau((u_i|u_i)_A)
\le \infty$.

We denote by $\T(A)$ the set of tracial states on $A$.

\begin{defn} The degree $d(X)$ of a $C^*$-correspondence $X$ 
is defined by $d(X) := \sup \{d_{\tau} | \tau \in \T(A)\}$. 
We say that $X$ is of finite-degree type 
if $d(X)< \infty$. 
\end{defn}
\begin{lemma}
If $A$ is commutative, then $d(X)=\sup_n\|\sum_{i=1}^n(u_i|u_i)_A\|$ for
any countable basis $\{u_i\}_{i=1}^{\infty}$.
\end{lemma}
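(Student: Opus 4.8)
The plan is to reduce the statement to the elementary fact that a positive element of a commutative \cst-algebra attains its norm as a supremum over the state space, together with the observation that on a commutative algebra every state is automatically tracial. Write $P_n = \sum_{i=1}^{n}(u_i|u_i)_A$. Each $P_n$ is a positive element of $A$, being a finite sum of the positive elements $(u_i|u_i)_A$, and the sequence $(P_n)$ is increasing; hence for any tracial state $\tau$ the definition gives directly $d_{\tau} = \sup_n \tau(P_n)$.

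First I would rewrite the degree by exchanging the two suprema, which is always legitimate:
\[
  d(X) = \sup_{\tau \in \T(A)} \sup_n \tau(P_n)
       = \sup_n \sup_{\tau \in \T(A)} \tau(P_n).
\]
It therefore suffices to prove, for each fixed $n$, that $\sup_{\tau \in \T(A)} \tau(P_n) = \|P_n\|$. Next I would invoke commutativity: since $ab = ba$ for all $a, b \in A$, the trace identity $\tau(ab) = \tau(ba)$ holds automatically, so $\T(A)$ coincides with the full state space $S(A)$ of $A$. It then remains to apply the standard norm formula for positive elements, namely that $\|P\| = \sup_{\omega \in S(A)} \omega(P)$ for any positive $P$ in a \cst-algebra. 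Taking $P = P_n$ and combining with the display yields $d(X) = \sup_n \|\sum_{i=1}^n (u_i|u_i)_A\|$.

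There is no serious obstacle here; the two points that deserve a line of care are the identification $\T(A) = S(A)$, which rests squarely on commutativity, and the norm formula $\|P\| = \sup_{\omega} \omega(P)$ in the possibly non-unital case. For the latter, the inequality $\omega(P) \le \|P\|$ follows from $\|\omega\| = 1$, while the reverse inequality follows by choosing states that nearly attain the norm; writing $A \cong C_0(\Omega)$, these may be taken to be point evaluations $\delta_x$, for which $\delta_x(P) = P(x)$ approximates $\|P\| = \sup_{x} P(x)$. Finally, the independence of the right-hand side from the chosen basis requires no separate argument: the equality above is derived for an arbitrary countable basis, and $d(X)$ is already known to be basis-independent by \lemref{lem:deg}.
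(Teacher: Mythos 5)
Your proof is correct and takes essentially the same route as the paper: both directions come down to the facts that $\tau(P_n)\le\|P_n\|$ and that a positive element of a commutative $C^*$-algebra (nearly) attains its norm on states, which are automatically tracial; your exchange of suprema is just a cleaner packaging of the paper's $\varepsilon$-argument. The only genuine streamlining is your closing remark that basis-independence of the right-hand side is automatic from \lemref{lem:deg} once the equality is established for an arbitrary basis, whereas the paper reproves it separately by repeating the computation of that lemma.
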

\begin{proof}
We assume that $A$ is commutative.
By the method similar as in the proof of Lemma \ref{lem:deg}, we can
show that $\sup_n \|\sum_{i=1}^{n} (u_i|u_i)_A\|$ does not depend on
the choice of a countable basis $\{u_i\}_{i=1}^{\infty}$.

Since
\[
  \sum_{i=1}^{n}\tau((u_i|u_i)_A)
  = \tau(\sum_{i=1}^{n}(u_i|u_i)_A)
  \le \|\sum_{i=1}^{n}(u_i|u_i)_A)\|,
\]
we have $d(X) \le \sup_n\|\sum_{i=1}^{n}(u_i|u_i)_A)\|$.

We fix a countable basis $\{u_i\}_{i=1}^{\infty}$.
For every $\varepsilon >0$, there exists an $n_0$ such
that for each $n \ge n_0$,
\[
  \|\sum_{i=1}^n (u_i|u_i)_A\| > \sup_n  \|\sum_{i=1}^n (u_i|u_i)_A\|
    -\varepsilon.
\]
There exists a tracial state $\tau$ such that
\[
\tau(\sum_{i=1}^{n}(u_i|u_i)_A) >  \|\sum_{i=1}^{n}(u_i|u_i)_A\|
                                     -\varepsilon.
\]
Thus we have $d(X) \ge \sup_n\|\sum_{i=1}^{n}(u_i|u_i)_A\|$.
\end{proof}

Let $R$ be a rational function of degree $N$ and $A$ a commutative \cst 
-algebra
${\rm C}({\hcomp})$.  Consider a \cst -correspondence $X$ over $A$ associated
with the complex dynamical system given by $R$ on $\hcomp$.
As described in \cite{K}, we can choose a concrete 
countable basis such that  we can compute explicitly as
\[
\sum_{i=1}^{n} (u_i|u_i)_A(y)= \verb!#!\{R^{-1}(y)\}.
\]
This equation is also shown in \cite{KW2} for any basis. 
Thus  we have
\[
\sup_n \|\sum_{i=1}^{\infty} (u_i|u_i)_A\| = N.
\]
Therefore the degree of $X$ coincides with the degree of $R$.  
Similar formulas hold for the case of self-similar maps. 

Let $Y$ be a \cst -correspondence over a \cst -algebra $B$  
of finite-degree type with $d(Y)=N$. 
Let $\beta > \log N$.  For a tracial state $\tau$ on $B$,
we can define a bounded linear functional $\hat{\tau_1}$ on $\cL(Y)$ by
\[
  \hat{\tau}_1(k) = e^{-\beta} \sum_{i=1}^{\infty}\tau((u_i|Tu_i)_A),
\]
for $T \in \cL(Y)$.

We need an elementary fact as follows:  
\begin{lemma}
$\hat{\tau_1}$ is a trace and does not depend on the choice of
a basis $\{u_i\}_{i=1}^{\infty}$.
\end{lemma}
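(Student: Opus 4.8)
The plan is to reduce both claims to a single positivity device: I will arrange every series that occurs as a double series of \emph{nonnegative} real numbers of the form $\tau(b^*b)$, so that the order of summation is irrelevant by Tonelli's theorem, and then match the two competing series term by term through the identity $\tau(b^*b)=\tau(bb^*)$ that holds because $\tau$ is tracial on $B$. Before that I would record that $\hat\tau_1$ is a well-defined bounded linear functional: for $T\ge 0$ each summand $\tau((u_i|Tu_i)_B)$ is nonnegative and, since $T\le\|T\|I$, one has $\sum_i\tau((u_i|Tu_i)_B)\le\|T\|\,d_\tau\le\|T\|\,N<\infty$; decomposing a general $T\in\cL(Y)$ into a linear combination of positive operators then shows that the defining series converges and that $|\hat\tau_1(T)|\le 4e^{-\beta}N\|T\|$.

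For the trace property I would first invoke the standard fact that a bounded linear functional $\phi$ on a \cst-algebra with $\phi(T^*T)=\phi(TT^*)$ for all $T$ is automatically tracial: the sesquilinear form $(S,T)\mapsto\phi(S^*T)-\phi(TS^*)$ vanishes on the diagonal, hence vanishes identically by polarization, which yields $\phi(ST)=\phi(TS)$. Thus it suffices to prove $\hat\tau_1(T^*T)=\hat\tau_1(TT^*)$. Expanding $Tu_i=\sum_j u_j(u_j|Tu_i)_B$ and using continuity of the inner product and of $\tau$,
\[
  \tau((u_i|T^*Tu_i)_B)=\tau((Tu_i|Tu_i)_B)=\sum_j\tau\big((u_j|Tu_i)_B^{\,*}(u_j|Tu_i)_B\big).
\]
Summing over $i$ gives $e^{\beta}\hat\tau_1(T^*T)=\sum_{i,j}\tau(b_{ij}^*b_{ij})$ with $b_{ij}=(u_j|Tu_i)_B$, a double series of nonnegative terms. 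Running the same computation for $T^*$ gives $e^{\beta}\hat\tau_1(TT^*)=\sum_{i,j}\tau(c_{ij}^*c_{ij})$ with $c_{ij}=(u_j|T^*u_i)_B=b_{ji}^*$, so $c_{ij}^*c_{ij}=b_{ji}b_{ji}^*$. Since $\tau(b_{ji}b_{ji}^*)=\tau(b_{ji}^*b_{ji})$ and both series consist of nonnegative terms, I may reindex freely and conclude that the two sums agree.

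Basis independence follows the identical pattern. Given a second basis $\{v_j\}$ and $T\ge 0$, I write $T=R^*R$ with $R=T^{1/2}=R^*$. Expanding $(u_i|Tu_i)_B=(Ru_i|Ru_i)_B$ in $\{v_j\}$ produces $\sum_i\tau((u_i|Tu_i)_B)=\sum_{i,j}\tau(d_{ij}^*d_{ij})$ with $d_{ij}=(v_j|Ru_i)_B$, while expanding $(v_j|Tv_j)_B=(Rv_j|Rv_j)_B$ in $\{u_i\}$ produces $\sum_j\tau((v_j|Tv_j)_B)=\sum_{i,j}\tau(e_{ji}^*e_{ji})$ with $e_{ji}=(u_i|Rv_j)_B$. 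Using $R=R^*$ one has $e_{ji}=(u_i|Rv_j)_B=(Ru_i|v_j)_B=(v_j|Ru_i)_B^{\,*}=d_{ij}^*$, whence $\tau(e_{ji}^*e_{ji})=\tau(d_{ij}d_{ij}^*)=\tau(d_{ij}^*d_{ij})$; nonnegativity again legitimizes the reindexing, so the two sums coincide for positive $T$, and linearity extends the equality to all of $\cL(Y)$.

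The hard part is not any single algebraic identity but the convergence bookkeeping, since $\cL(Y)$ contains non-compact operators and the naive double series $\sum_{i,j}\tau((u_i|Su_j)_B(u_j|Tu_i)_B)$ attached to the direct statement $\hat\tau_1(ST)=\hat\tau_1(TS)$ has no evident absolute convergence. Passing instead through $\hat\tau_1(T^*T)=\hat\tau_1(TT^*)$, and through the factorization $T=R^*R$ for the change of basis, is precisely what turns every double series into one with nonnegative terms, where Tonelli's theorem removes all ordering issues; this reduction is the only nonroutine point, the remaining ingredients being the frame expansion $x=\sum_i u_i(u_i|x)_B$, the $B$-linearity and adjointability of operators in $\cL(Y)$, and the trace identity for $\tau$.
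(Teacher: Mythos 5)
Your proof is correct, and its two halves relate differently to the paper's. For basis independence you argue exactly as the paper does: factor a positive $T$ as $T^{1/2}\cdot T^{1/2}$, expand $T^{1/2}u_i$ in the basis $\{v_j\}$, apply $\tau(b^*b)=\tau(bb^*)$ termwise, and justify the interchange of the two summations by nonnegativity (the paper writes this as an interchange of iterated suprema, which is the same Tonelli device as yours), then extend by linearity. For the trace property, however, you take a genuinely different route. The paper deduces traciality \emph{from} basis independence: for a unitary $U\in\cL(Y)$ the system $\{Uu_i\}$ is again a countable basis, so $\hat{\tau}_1(U^*TU)=\hat{\tau}_1(T)$, and since the unitaries span the unital \cst-algebra $\cL(Y)$ this yields $\hat{\tau}_1(ST)=\hat{\tau}_1(TS)$. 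You instead prove $\hat{\tau}_1(T^*T)=\hat{\tau}_1(TT^*)$ directly, expanding $Tu_i$ in the frame to get a nonnegative double series $\sum_{i,j}\tau(b_{ij}^*b_{ij})$ with $b_{ij}=(u_j|Tu_i)$, matching it against the series for $TT^*$ via $c_{ij}=b_{ji}^*$ and termwise traciality, and then invoke polarization of the sesquilinear form $(S,T)\mapsto\phi(S^*T)-\phi(TS^*)$. Both arguments are sound; the paper's unitary trick is shorter once basis independence is in hand (traciality comes almost for free), while yours is self-contained, avoids the fact that unitaries span a unital \cst-algebra, and makes the trace identity visible at the level of the matrix coefficients $b_{ij}$. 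You also supply the well-definedness and boundedness estimate $|\hat{\tau}_1(T)|\le 4e^{-\beta}N\|T\|$ via $0\le T\le\|T\|I$ and $d_\tau\le N=d(Y)$, a point the paper leaves implicit; this is where the finite-degree hypothesis actually enters, and your bookkeeping of it is a genuine improvement in completeness.
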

\begin{proof}
Let $\{v_j\}_{j=1}^{\infty}$ be another basis of $X$, and
$T$ be a positive element in $\cL(Y)$.
  As in the proof of Lemma \ref{lem:deg}, we have
\begin{align*}
  \sup_n \sum_{i=1}^{n}e^{-\beta}\tau((T^{1/2}u_i|T^{1/2}u_i)_A)
    & = \sup_n \lim_{m \to \infty} \sum_{i=1}^{n}\sum_{j=1}^m
         e^{-\beta}\tau ((v_j(v_j|T^{1/2}u_i)_A|T^{1/2}u_i)_A) \\
& = \sup_m \lim_{n \to \infty}\sum_{j=1}^m \sum_{i=1}^{n}
       e^{-\beta}\tau((v_j|T^{1/2}u_i(T^{1/2} u_i|v_j)_A)_A) \\
& = \sup_m \lim_{n \to \infty}\sum_{j=1}^m \sum_{i=1}^{n}
       e^{-\beta}\tau((T^{1/2}v_j|u_i( u_i|T^{1/2}v_j)_A)_A) \\
& = \sup_m \sum_{j=1}^m e^{-\beta}  \tau((T^{1/2}v_j|T^{1/2}v_j)_A).
\end{align*}
This shows that the definition of $\hat{\tau_1}$ does not depend on the
choice of basis.
Let $U$ be a unitary in $\cL(Y)$.  Then $\{Uu_i\}_{i=1}^{\infty}$ is
also a basis of $Y$.  For $T \in \cL(Y)$, we have 
\[
  \sum_{i=1}^{\infty}e^{-\beta}\tau(Uu_i|TUu_i)_A
  = \sum_{i=1}^{\infty}e^{-\beta}\tau(u_i|Tu_i)_A.
\]
Then  $\hat{\tau_1}(U^*TU) = \hat{\tau_1}(T)$,
and it follows that $\hat{\tau_1}$ is a
trace.
\end{proof}

Let $I$ be an closed two sided ideal of a \cst -algebra $B$, and 
$\varphi$ be
a state on $I$. Consider the GNS representation 
$(\pi_{\varphi},H_{\varphi}, \xi_{\varphi})$. Let 
$\pi : A \rightarrow B(H_{\varphi})$ be the extension of 
$\pi_{\varphi}$ to $A$.   
Recall that the canonical extension  
$\overline{\varphi}$ of $\varphi$ 
to $B$
is defined as $\overline{\varphi}(a) 
= (\pi(a)\xi_{\varphi},\xi_{\varphi})$. 
Then 
$\overline{\varphi}(a) = \lim_{i}\varphi(ae_i)$, for any
approximate unit $\{e_i\}_i$ in $I$ as in \cite{Bl} Prop. 6.4.16. 

Let $\tau_1$ be the restriction of $\hat{\tau}_1$ on $\K(Y)$.  We note that
the \cst -algebra $\K(Y)$ is a closed two sided ideal of $\cL(Y)$.

\begin{lemma} \label{lem:natural}
The canonical extension  $\overline{\tau}_1$ of $\tau_1$
to $\cL(Y)$ is given by $\hat{\tau}_1$.
\end{lemma}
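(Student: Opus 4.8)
The plan is to reduce the statement to the elementary characterization of the canonical extension recalled just above: for any approximate unit $\{e_\lambda\}$ of the ideal $\K(Y)\subset\cL(Y)$ one has $\overline{\tau}_1(T)=\lim_\lambda \tau_1(Te_\lambda)$ for every $T\in\cL(Y)$. Since $\hat{\tau}_1$ is positive (each term $\tau((u_i|Tu_i)_A)$ is nonnegative when $T\ge 0$) and restricts to $\tau_1$ on $\K(Y)$ by definition, $\tau_1$ is a positive functional on $\K(Y)$, and the recalled construction and limit formula apply verbatim to positive functionals (here $\|\tau_1\|=e^{-\beta}d_\tau<1$, so $\tau_1$ is not a state, but this is immaterial). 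It therefore suffices to exhibit one convenient approximate unit of $\K(Y)$ and to check that the associated limit recovers $\hat{\tau}_1(T)$.

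For the approximate unit I would use the increasing sequence $P_n=\sum_{i=1}^n\theta_{u_i,u_i}$ attached to the fixed countable basis. Each $P_n$ satisfies $0\le P_n\le I$ because $\sum_{i\in F}\theta_{u_i,u_i}\le I$ for every finite $F$, and the basis identity $x=\sum_i u_i(u_i|x)_A$ gives $P_nx\to x$ for all $x\in Y$; consequently $\|P_n\theta_{x,y}-\theta_{x,y}\|\le\|P_nx-x\|\,\|y\|\to 0$, and by density of finite sums of the $\theta_{x,y}$ together with $\|P_n\|\le 1$, $\{P_n\}$ is an approximate unit of $\K(Y)$. As $\K(Y)$ is an ideal of $\cL(Y)$ we have $TP_n\in\K(Y)$, so $\tau_1(TP_n)=\hat{\tau}_1(TP_n)$.

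The core computation is to evaluate $\hat{\tau}_1(TP_n)$. Using $T\theta_{u_i,u_i}=\theta_{Tu_i,u_i}$ and linearity, $TP_n=\sum_{i=1}^n\theta_{Tu_i,u_i}$, so everything reduces to the single identity
\[
  \hat{\tau}_1(\theta_{x,y})=e^{-\beta}\tau((y|x)_A).
\]
Granting it and substituting $x=Tu_i$, $y=u_i$ yields $\hat{\tau}_1(TP_n)=e^{-\beta}\sum_{i=1}^n\tau((u_i|Tu_i)_A)$, whose limit as $n\to\infty$ is $\hat{\tau}_1(T)$ by the very definition of $\hat{\tau}_1$ as a series. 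Chaining the three equalities gives $\overline{\tau}_1(T)=\lim_n\tau_1(TP_n)=\lim_n\hat{\tau}_1(TP_n)=\hat{\tau}_1(T)$, as desired.

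The one genuinely analytic point, and the step I expect to require the most care, is the displayed identity, because it hides an interchange of the infinite sum over the basis with the trace $\tau$. Unfolding the definition gives $\hat{\tau}_1(\theta_{x,y})=e^{-\beta}\sum_j\tau((u_j|x)_A(y|u_j)_A)$. I would first handle $x=y$: writing $c_j=(u_j|x)_A$, each summand equals $\tau(c_jc_j^*)=\tau(c_j^*c_j)\ge 0$ by the trace property, the partial sums of $\sum_j c_j^*c_j=\sum_j(x|u_j)_A(u_j|x)_A$ converge in norm to $(x|x)_A$ (by continuity of the inner product applied to $\sum_j u_j(u_j|x)_A=x$), and continuity of $\tau$ then passes the limit through to give $\hat{\tau}_1(\theta_{x,x})=e^{-\beta}\tau((x|x)_A)$. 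The general case follows by polarization, both $(x,y)\mapsto\hat{\tau}_1(\theta_{x,y})$ and $(x,y)\mapsto e^{-\beta}\tau((y|x)_A)$ being linear in $x$ and conjugate-linear in $y$. Finally, since $\overline{\tau}_1$ and $\hat{\tau}_1$ are linear and the computation above shows they agree on positive $T$, they agree on all of $\cL(Y)$.
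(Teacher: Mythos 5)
Your proof is correct and follows essentially the same route as the paper's: both evaluate $\overline{\tau}_1(T)=\lim_n\tau_1(TP_n)$ along the approximate unit $P_n=\sum_{i=1}^n\theta_{u_i,u_i}$ coming from the basis, using $T\theta_{u_i,u_i}=\theta_{Tu_i,u_i}$ and the identity $\hat{\tau}_1(\theta_{x,y})=e^{-\beta}\tau((y|x)_A)$. The only difference is that you supply detailed verifications (the polarization argument with the trace property of $\tau$, and the rank-one density argument for the approximate unit) of the two facts the paper simply asserts.
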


\begin{proof}
We note that $\hat{\tau}_1(\theta_{x,y})= e^{-\beta}\tau((y|x)_A)$. 
If $\{u_i\}_{i=1}^{\infty}$ is a basis of $Y$, then 
$\{\,\theta_{u_i,u_i}\}_{i=1}^{\infty}$ is an approximate unit in
$\K(Y)$. Therefore  
for $T \in \cL(Y)$, we have
\begin{align*}
  \overline{\tau}_1(T) & = \lim_{m \to \infty}\sum_{j}^{m}
  \tau_1(T\theta_{u_j,u_j}) 
                        = \lim_{m \to \infty}\sum_{j}^m \tau_1
       (\theta_{Tu_j,u_j}) \\
                      & = \lim_{m \to \infty} \sum_{j=1}^m e^{-\beta}
       \tau((u_j|Tu_j)_A) 
                       = \hat{\tau}_1(T)
\end{align*}

\end{proof}

Let $A$ be a \cst -algebra and $X$ be a \cst -correspondence over $A$
with a countable basis $\{u_i\}_{i=1}^{\infty}$.
Since we use tensor products of correspondences and their bases 
frequently, we
use the notations of multi index.  Namely, for ${\bf p}=
(i_1,i_2,\dots,i_n) \in {\mathbb N}^n$, we write ${\bf u}_{\bf p} = u_{i_1}
\otimes u_{i_2} \otimes \cdots \otimes u_{i_n}$.

We assume that $X$ is of finite-degree type.
We can also define a bounded tracial linear functional $\hat{\tau}^{(n)}$
on $\cL(X^{\otimes n})$ and its restriction $\tau^{(n)}$ to 
$\K(X^{\otimes n})$
using the Hilbert $A$-module $X^{\otimes n}$ and its basis
$\{{\bf u}_{\bf p} \}_{{\bf p} \in {\mathbb N}^n}$ as
\[
  \hat{\tau}^{(n)}(T) =e^{-n \beta} \sum_{{\bf p} \in {\mathbb N}^n}
       \tau(({\bf u}_{\bf p} | T{\bf u}_{\bf p} )_A) \qquad \text{for } T
       \in \cL(X^{\otimes n}).
\]


\begin{defn} Let $J$ be a closed two-sided ideal of $A$ such that
$J \subset J_X$, and $\beta$ be a positive real number.  A tracial state
$\tau$ on $A$ satisfies $\beta$-condition if it satisfies the following
two conditions:
\begin{enumerate}
  \item[($\beta$1)] $\sum_{i=1}^{\infty}\tau((u_i|\phi(a)u_i)_A) =
                   e^{\beta}\tau(a)$ \quad $\forall a \in J$,
  \item[($\beta$2)] $\sum_{i=1}^{\infty}\tau((u_i|\phi(a)u_i)_A) \le
                   e^{\beta}\tau(a)$ \quad  $\forall a \in A^+$.
\end{enumerate}
\end{defn}

Since $B_{n}$ is isomorphic to $\K(X^{\otimes n})$ by $\pi_K^{(n)}$
for each $n$, we can define a bounded linear tracial functional
$\sigma^{(n)}$ on $B_n$ by
\[
  \sigma^{(n)} = \tau^{(n)} \circ (\pi_K^{(n)})^{-1}.
\]
For convenience, we put $\tau^{(0)} = \tau$, $\sigma^{(0)}=
\tau \circ \pi_A{}^{-1}$.

\begin{prop}\label{prop:well}
We assume that a tracial state $\tau$ on $A$ satisfies $(\beta 1)$.
Then, for $x \in \F^{(n)} \cap B_{n+1} = B_n \cap B_{n+1}$, we have
\[
  \sigma^{(n+1)}(x) = \sigma^{(n)}(x).
\]
\end{prop}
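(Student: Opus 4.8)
The plan is to reduce the claim to the rank-one generators of $B_n'$ and then compute both traces explicitly, the equality emerging precisely from condition $(\beta 1)$. Since $\sigma^{(n)} = \tau^{(n)}\circ(\pi_K^{(n)})^{-1}$ and $\sigma^{(n+1)} = \tau^{(n+1)}\circ(\pi_K^{(n+1)})^{-1}$ are both bounded linear functionals, and by the preceding Proposition $x$ ranges over $B_n' = \pi_K^{(n)}(\K(X^{\otimes n}J))$, it suffices to verify $\sigma^{(n+1)}(x) = \sigma^{(n)}(x)$ on the spanning set of elements $x = \pi_K^{(n)}(\theta_{\zeta a, \omega b})$ with $\zeta, \omega \in X^{\otimes n}$ and $a, b \in J$; these span a dense subspace of $B_n'$, since $X^{\otimes n}J$ is the closed span of the vectors $\zeta a$.

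First I would put such an $x$ into the $(n+1)$-picture. Writing $c = ab^* \in J$ and using $\pi_X^{(n)}(\zeta a) = \pi_X^{(n)}(\zeta)\pi_A(a)$, one gets $x = \pi_X^{(n)}(\zeta)\pi_A(c)\pi_X^{(n)}(\omega)^*$. As $c \in J$, the $J$-covariance relation gives $\pi_A(c) = \pi_K(\phi(c))$, and because $\phi(c) \in \K(X)$ the basis expansion $\phi(c) = \sum_i \theta_{\phi(c)u_i, u_i}$ converges in norm, so $\pi_K(\phi(c)) = \sum_i \pi_X(\phi(c)u_i)\pi_X(u_i)^*$. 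Substituting and regrouping via $\pi_X^{(n)}(\zeta)\pi_X(v) = \pi_X^{(n+1)}(\zeta\otimes v)$ yields the norm-convergent expression
\[
  x = \sum_i \pi_K^{(n+1)}(\theta_{\zeta\otimes\phi(c)u_i,\ \omega\otimes u_i}),
\]
so that $x = \pi_K^{(n+1)}(k')$ with $k' = \sum_i \theta_{\zeta\otimes\phi(c)u_i,\ \omega\otimes u_i} \in \K(X^{\otimes n+1})$, the convergence in $\K(X^{\otimes n+1})$ following from $\pi_K^{(n+1)}$ being isometric.

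Then I would compute the two values. For the lower level, using $\hat{\tau}^{(n)}(\theta_{\xi,\eta}) = e^{-n\beta}\tau((\eta|\xi)_A)$ together with the trace property,
\[
  \sigma^{(n)}(x) = e^{-n\beta}\tau((\omega b|\zeta a)_A) = e^{-n\beta}\tau((\omega|\zeta)_A c).
\]
For the upper level, boundedness of $\tau^{(n+1)}$ lets me move it through the norm-convergent sum; using the inner-product identity $(\omega\otimes u_i|\zeta\otimes\phi(c)u_i)_A = (u_i|\phi((\omega|\zeta)_A c)u_i)_A$ for the tensor product and setting $d = (\omega|\zeta)_A c \in J$,
\[
  \sigma^{(n+1)}(x) = e^{-(n+1)\beta}\sum_i \tau((u_i|\phi(d)u_i)_A) = e^{-(n+1)\beta}e^{\beta}\tau(d) = e^{-n\beta}\tau((\omega|\zeta)_A c),
\]
where the middle equality is exactly condition $(\beta 1)$ applied to $d \in J$. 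The two expressions agree, proving the claim on generators and hence on all of $B_n'$ by boundedness and density.

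The step I expect to be the main obstacle is the second one: rewriting $\pi_A(c)$ as a sum of rank-one operators one level higher via $J$-covariance and the basis expansion, where $J \subset J_X$ is essential so that $\phi(c) \in \K(X)$ and the expansion converges, and where one must justify interchanging the infinite sum with $\tau^{(n+1)}$. The decisive cancellation $e^{-(n+1)\beta}\cdot e^{\beta} = e^{-n\beta}$ is supplied entirely by $(\beta 1)$; notably $(\beta 2)$ plays no role here, reflecting that this well-definedness is an exact equality driven by the first $\beta$-condition alone.
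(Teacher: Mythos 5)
Your proof is correct, and it follows a genuinely different route from the paper's. The paper never reduces to rank-one generators: it takes an arbitrary $x=\pi_K^{(n)}(k)=\pi_K^{(n+1)}(k')$, where $k\in\K(X^{\otimes n}J)$ comes from the identification $B_n\cap B_{n+1}=B_n'$ (so that the diagonal coefficients $({\bf u}_{\bf p}\,|\,k{\bf u}_{\bf p})_A$ lie in $J$) and $k'$ is an abstract lift whose existence is immediate from $x\in B_{n+1}$; it then derives the coefficient identity $({\bf u}_{{\bf p}'}\,|\,k'{\bf u}_{{\bf p}'})_A=(u_{i_{n+1}}\,|\,\phi(({\bf u}_{\bf p}\,|\,k{\bf u}_{\bf p})_A)u_{i_{n+1}})_A$ inside $\cO_X(J)$ purely from the representation relations and injectivity of $\pi_A$, and applies $(\beta 1)$ termwise. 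You instead construct the lift explicitly on the generators $\theta_{\zeta a,\omega b}$ via $J$-covariance and the norm-convergent expansion $\phi(c)=\sum_i\theta_{\phi(c)u_i,u_i}$ with $c=ab^*\in J$, then evaluate both traces in closed form; your inner-product identity $(\omega\otimes u_i\,|\,\zeta\otimes\phi(c)u_i)_A=(u_i\,|\,\phi(d)u_i)_A$ with $d=(\omega|\zeta)_A c\in J$ is the rank-one shadow of the paper's coefficient identity, and the decisive cancellation $e^{-(n+1)\beta}e^{\beta}=e^{-n\beta}$ via $(\beta 1)$ is the same in both arguments. What the paper's version buys is uniformity: no density or continuity step is needed, the only convergence issue being the unconditional convergence built into the definition of $\tau^{(n+1)}$, with the use of $J$-covariance hidden inside Lemma \ref{lem:J2} through the core computation. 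What your version buys is transparency: the explicit series for $k'$ is in effect a constructive form of the inclusion $B_n'\subset B_{n+1}$, which the paper proves by the same trick $\pi_K^{(n)}(\theta_{\xi a,\eta b})=\pi_X^{(n)}(\xi)\pi_A(ab^*)\pi_X^{(n)}(\eta)^*$, and it isolates exactly where $J$-covariance and boundedness of $\tau^{(n+1)}$ (finite-degree type, needed to pass the functional through the series and to close the density argument) enter. Your flagged steps do close: $\{\sum_{i\le m}\theta_{u_i,u_i}\}_m$ is an approximate unit for $\K(X)$, so $\sum_{i\le m}\theta_{\phi(c)u_i,u_i}=\phi(c)\sum_{i\le m}\theta_{u_i,u_i}\to\phi(c)$ in norm, and $\pi_K^{(n+1)}$ is isometric because the universal $\pi_A$ is injective, so the preimage series indeed converges in $\K(X^{\otimes n+1})$.
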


\begin{proof}
We put ${\bf p}=(i_1,i_2,\dots,i_n)$, ${\bf u}_{\bf p} = u_{i_1}\otimes
u_{i_2} \otimes
\cdots \otimes u_{i_n}$, ${\bf p}'=(i_1,i_2,\dots,i_n,i_{n+1})$ and
${\bf u}_{{\bf p}'}= u_{i_1}\otimes u_{i_2} \otimes\cdots \otimes u_{i_n}
\otimes u_{i_{n+1}} = {\bf u}_{\bf p} \otimes u_{i_{n+1}}$.

Due to $x \in \F^{(n)} \cap B_{n+1} = B_n \cap B_{n+1}=B_n'$,
we can write as $x=\pi_K^{(n)}(k)$, $k \in K(X^{\otimes n}J)$, which
shows $({\bf u}_{\bf p} | k{\bf u}_{\bf p})_A \in J$ for each ${\bf 
u}_{\bf p}$.

On the other hand, we can write $x=\pi_K^{(n+1)}(k')$, $k' \in
  K(X^{\otimes n+1})$ because $x \in B_{n+1}$.
Then we have
\begin{align*}
   \pi_A(({\bf u}_{{\bf p}'} | k'{\bf u}_{{\bf p}'})_A)
  = &  \pi_X(u_{i_{n+1}})^* \cdots \pi_{X}(u_{i_1})^*x
  \pi_X(u_{i_1})\cdots \pi_X(u_{i_{n+1}}) \\
=  &  \pi_X(u_{i_{n+1}})^* \left( \pi_X(u_{i_n})^* \cdots
  \pi_{X}(u_{i_n})^* x \pi_X(u_{i_1}) \cdots \pi_X(u_{i_n})\right)
  \pi_X(u_{i_{n+1}}) \\
   =& \pi_X(u_{i_{n+1}})^*
  \left( \pi_A({\bf u}_{\bf p}|k {\bf u}_{\bf p})_A)\right)
  \pi_X(u_{i_{n+1}}) \\
   = &\pi_X(u_{i_{n+1}})^*
        \pi_X(\phi( ({\bf u}_{\bf p}|k {\bf u}_{\bf p})_A
)u_{i_{n+1}}) \\
   = &\pi_A((u_{i_{n+1}} | \phi(({\bf u}_{\bf p}|k {\bf u}_{\bf p})_A
   ) u_{i_{n+1}})_A).
\end{align*}
Then we have 
\[
  \tau^{(n+1)}(k')
= e^{-(n+1)\beta} \sum_{{\bf p} \in {\mathbb N}^n}
  \sum_{i_{n+1}=1}^{\infty}
\tau((u_{i_{n+1}}|\phi(({\bf u}_{\bf p}|k {\bf u}_{\bf p})_A
)u_{i_{n+1}})_A).
\]
Using ($\beta 1$)
\begin{align*}
  \tau^{(n+1)}(k')
  &  = e^{-n \beta} \sum_{{\bf p} \in {\mathbb N}^n}\tau(
({\bf u}_{\bf p}|k {\bf u}_{\bf p})_A ) \\
  & = \tau^{(n)}(k).
\end{align*}
By this, we have $\sigma^{(n+1)}(x) = \sigma^{(n)}(x)$ for
$x \in \F^{(n)} \cap B_{n+1}$.
\end{proof}

We assume that $\tau$ satisfies $(\beta 2)$.  $\tau^{(n+1)}$ is a
tracial bounded linear functional on $\K(X^{\otimes n+1})$.

We assume $n \ge 1$.  We denote by $\F(\Sigma)$ the set of finite
subsets of $\Sigma$.  Let $e_F = \sum_{{\bf p} \in F}
\theta_{{\bf u}_{\bf p}, {\bf u}_{\bf p}}$ 
for a finite subset $F$ of ${\mathbb N}^n$. 
Then $\{e_F\}_{F \in \F({\mathbb N}^n)}$ is an
approximate unit of $\K(X^{\otimes n})$.  The canonical extension 
$\overline{\tau^{(n)}}$ of $\tau^{(n)}$ to 
$\L(X^{\otimes n})$ satisfies 
\[
  \overline{\tau^{(n)}}(T)
  = e^{-n\beta} \lim_F \sum_{{\bf p} \in F}  \tau(({\bf u}_{\bf p}| T
e_{F}{\bf u}_{\bf p})_A),
\]
where $T \in \cL(X^{\otimes n})$, and
it is expressed by Lemma \ref{lem:natural} as
\[
  \overline{\tau^{(n)}}(T)   = e^{-n \beta}
  \sum_{{\bf q} \in {\mathbb N}^n} \tau(({\bf u}_{\bf q}|T
{\bf u}_{\bf q})_A)
\]
for $T \in \cL(X^{\otimes n})$.  Then the following Lemma holds.

\begin{lemma} \label{lem:natural0}
We assume that $\tau$ satisfis $(\beta 2)$. 
Let $n \ge 1$ and $0 \le i \le n$.  For $k \in \K(X^{\otimes i})$.
we have
\[
  \overline{\tau^{(n)}}(k \otimes id_{(n,i)})   = e^{-n \beta}
  \sum_{{\bf q} \in {\mathbb N}^n} \tau(({\bf u}_{\bf q}|(k \otimes 
id_{(n,i)})
{\bf u}_{\bf q})_A).
\]
\end{lemma}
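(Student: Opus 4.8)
The plan is to recognize the assertion as the specialization, to $T = k \otimes id_{(n,i)}$, of the identity
\[
  \overline{\tau^{(n)}}(T) = e^{-n\beta}\sum_{{\bf q}\in{\mathbb N}^n}\tau(({\bf u}_{\bf q}\,|\,T{\bf u}_{\bf q})_A), \qquad T \in \cL(X^{\otimes n}),
\]
recorded just before the statement. The only fact one must import from Section 2 is that, for $k \in \K(X^{\otimes i})$, the operator $k \otimes id_{(n,i)}$ is a well-defined element of $\cL(X^{\otimes n})$; granting the displayed formula for every adjointable $T$, the lemma then follows by substitution with no further computation. Thus the real task is to justify that identity for a general (possibly non-compact) $T$ and to make sure the series on the right is meaningful.

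For the first point I would reproduce the argument of \lemref{lem:natural} on the module $X^{\otimes n}$. Since $\{{\bf u}_{\bf p}\}$ is a basis, the net $e_F = \sum_{{\bf p}\in F}\theta_{{\bf u}_{\bf p},{\bf u}_{\bf p}}$, $F \in \F({\mathbb N}^n)$, is an increasing approximate unit of $\K(X^{\otimes n})$, so the canonical extension satisfies $\overline{\tau^{(n)}}(T) = \lim_F \tau^{(n)}(Te_F)$. Writing $Te_F = \sum_{{\bf p}\in F}\theta_{T{\bf u}_{\bf p},{\bf u}_{\bf p}}$ and using $\tau^{(n)}(\theta_{x,y}) = e^{-n\beta}\tau((y|x)_A)$, one gets
\[
  \tau^{(n)}(Te_F) = e^{-n\beta}\sum_{{\bf p}\in F}\tau(({\bf u}_{\bf p}\,|\,T{\bf u}_{\bf p})_A),
\]
and passing to the limit over the directed set $\F({\mathbb N}^n)$ identifies $\overline{\tau^{(n)}}(T)$ with the unconditional sum. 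Setting $T = k \otimes id_{(n,i)}$ gives the lemma.

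The step that genuinely uses the hypothesis $(\beta 2)$ — and the only place I expect any work — is the verification that $\hat{\tau}^{(n)}$ is finite, so that the net $\{\tau^{(n)}(Te_F)\}_F$ converges and its limit equals the unconditional series. Testing on $T = I$, this reduces to $\sum_{{\bf q}\in{\mathbb N}^n}\tau(({\bf u}_{\bf q}\,|\,{\bf u}_{\bf q})_A) < \infty$. I would prove this by induction on $n$: peeling off the last tensor factor gives $({\bf u}_{\bf q}\,|\,{\bf u}_{\bf q})_A = (u_{i_n}\,|\,\phi(({\bf u}_{{\bf q}'}\,|\,{\bf u}_{{\bf q}'})_A)u_{i_n})_A$ with ${\bf q}'=(i_1,\dots,i_{n-1})$, and $(\beta 2)$ applied to the positive element $({\bf u}_{{\bf q}'}\,|\,{\bf u}_{{\bf q}'})_A$ bounds the sum over $i_n$ by $e^{\beta}\tau(({\bf u}_{{\bf q}'}\,|\,{\bf u}_{{\bf q}'})_A)$; iterating yields the bound $e^{(n-1)\beta}d_{\tau} < \infty$. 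With boundedness in hand, monotone convergence handles $T \ge 0$ and a decomposition into positive parts handles general $T$, so the net-limit coincides with the unconditional sum and the substitution $T = k \otimes id_{(n,i)}$ completes the proof.
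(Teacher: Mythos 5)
Your proposal is correct and takes essentially the same route as the paper: the paper proves the lemma precisely by establishing $\overline{\tau^{(n)}}(T) = e^{-n\beta}\sum_{{\bf q}\in{\mathbb N}^n}\tau(({\bf u}_{\bf q}\,|\,T{\bf u}_{\bf q})_A)$ for all $T \in \cL(X^{\otimes n})$ via the approximate unit $e_F = \sum_{{\bf p}\in F}\theta_{{\bf u}_{\bf p},{\bf u}_{\bf p}}$ and \lemref{lem:natural}, and then substituting $T = k \otimes id_{(n,i)}$. Your explicit verification that $(\beta 2)$ yields the bound $e^{(n-1)\beta}d_{\tau}$, making $\tau^{(n)}$ bounded, is detail the paper states without proof (``$\tau^{(n+1)}$ is a tracial bounded linear functional'') but it is the same argument.
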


Since $B_{n+1}$ is an ideal of $\F^{(n+1)}$, there exists the canonical 
extension $\overline{\sigma^{(n+1)}}$ on $\F^{(n+1)}$. For a finite 
subset $F$ of ${\mathbb N}^{n+1}$, 
we put $\hat{e}_F = \sum_{{\bf q} \in F}\pi_X^{(n+1)}({\bf u}_{\bf q})
  \pi_X^{(n+1)}({\bf u}_{\bf q})^* 
= \sum_{{\bf q} \in F}\pi_K^{(n+1)}
(\theta_{{\bf u}_{\bf q},{\bf u}_{\bf q}})$.  Then
$\{\hat{e}_F\}_{F \in \F({\mathbb N}^{n+1})}$ is an approximate unit
of $B_{n+1}$.  Then we have
\[
  \overline{\sigma^{(n+1)}}(x) = \lim_F \sigma^{(n+1)}(x\hat{e}_F),
\]
for $x \in \F^{(n+1)}$.

Let $x \in B_i$ for $0 \le i \le n$.  We write as $x = \pi_K^{(i)}(k)$ 
where
$k\in \K(X^{\otimes i})$.  Then we have
\[
  x\hat{e}_F =  \sum_{{\bf q} \in F}
   \pi_K^{(n+1)}((k \otimes id_{(n+1,i)})\theta_{{\bf u}_{\bf q},{\bf
   u}_{\bf q}}).
\]
Using this,
\begin{align*}
  \overline{\sigma^{(n+1)}}(x) =& \lim_F \sigma^{(n+1)}(x\hat{e}_F)  \\
  = &  \lim_F \tau^{(n+1)} \left((k \otimes id_{(n+1,i)})
  \sum_{{\bf q} \in F}\theta_{{\bf u}_{\bf q},{\bf  u}_{\bf
  q}}\right) 
  =  \overline{\tau^{(n+1)}}(k \otimes id_{(n,i)}).
\end{align*}

\begin{lemma} \label{lem:natural2}
We assume that $\tau$ satisfis $(\beta 2)$.
Let $x \in \F^{(n)}$ with  $x = \sum_{i=0}^{n}x_i$, where $x_i 
\in B_i$.
Take $k_i \in \K^{(i)}(X^{\otimes i})$ such that
$x_i = \pi_K^{(i)}(k_i)$.  Then we have
\[
  \overline{\sigma^{(n+1)}}(x)
  =  e^{-(n+1)\beta }\sum_{{\bf q} \in {\mathbb N}^{n+1} }\tau(({\bf u}_{\bf
  q} |  \sum_{i=1}^n (k_i \otimes id_{(n+1,i)}){\bf u}_{\bf q} )_A).
\]
\end{lemma}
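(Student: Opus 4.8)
The plan is to reduce the statement to the single-block identity established in the paragraph immediately preceding the lemma, and then assemble the pieces by linearity. Since $x \in \F^{(n)} \subseteq \F^{(n+1)}$ decomposes as $x = \sum_{i=0}^{n} x_i$ with $x_i = \pi_K^{(i)}(k_i) \in B_i$, and since $\overline{\sigma^{(n+1)}}$ is the canonical extension of $\sigma^{(n+1)}$ and is therefore a bounded \emph{linear} functional on $\F^{(n+1)}$ (defined on each block $B_i \subseteq \F^{(n+1)}$ individually), I would first write
\[
  \overline{\sigma^{(n+1)}}(x) = \sum_{i=0}^{n} \overline{\sigma^{(n+1)}}(x_i).
\]

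Next I would invoke, for each fixed $i$ with $0 \le i \le n$, the identity computed just before the lemma, namely $\overline{\sigma^{(n+1)}}(x_i) = \overline{\tau^{(n+1)}}(k_i \otimes id_{(n+1,i)})$, where for $i=0$ the operator $k_0 \otimes id_{(n+1,0)}$ is understood as $\phi(k_0) \otimes id_n$. Applying Lemma \ref{lem:natural0} with $n$ replaced by $n+1$ to each of these gives
\[
  \overline{\sigma^{(n+1)}}(x_i)
  = e^{-(n+1)\beta} \sum_{{\bf q} \in {\mathbb N}^{n+1}}
     \tau(({\bf u}_{\bf q} | (k_i \otimes id_{(n+1,i)}){\bf u}_{\bf q})_A),
\]
each such series being a finite real number since, under hypothesis $(\beta 2)$, the canonical extension is bounded.

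Finally I would combine. Summing the previous display over $i = 0,\dots,n$, interchanging the (finite) sum over $i$ with the sum over ${\bf q}$, and then using additivity of $\tau$ together with linearity of the $A$-valued inner product in its second argument to pull the $i$-sum inside, I obtain
\[
  \overline{\sigma^{(n+1)}}(x)
  = e^{-(n+1)\beta} \sum_{{\bf q} \in {\mathbb N}^{n+1}}
     \tau(({\bf u}_{\bf q} | \sum_{i=0}^{n}(k_i \otimes id_{(n+1,i)}){\bf u}_{\bf q})_A),
\]
which is the asserted formula.

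I do not expect a deep obstacle here: the genuine analytic content, namely the passage from the approximate-unit definition of the canonical extension to the explicit series over ${\mathbb N}^{n+1}$ and the use of $(\beta 2)$ to guarantee finiteness, was already carried out in Lemma \ref{lem:natural0} and in the single-block computation preceding the statement. The only points that need a word of justification are the linearity of $\overline{\sigma^{(n+1)}}$ applied to the (generally overlapping, non-unique) decomposition $x = \sum_i x_i$, which holds because a canonical extension is linear, and the harmless interchange of a finite sum with a convergent one. Thus this lemma is essentially a bookkeeping consequence of what precedes it.
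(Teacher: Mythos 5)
Your proof is correct and is essentially the paper's own argument: decompose $x=\sum_{i=0}^n x_i$ by linearity of the canonical extension, apply the single-block identity $\overline{\sigma^{(n+1)}}(x_i)=\overline{\tau^{(n+1)}}(k_i\otimes id_{(n+1,i)})$ established just before the lemma, and convert to the series via Lemma \ref{lem:natural0}. Incidentally, your final display with the sum $\sum_{i=0}^{n}$ inside the inner product is the correct form; the lemma's statement writes $\sum_{i=1}^{n}$, which is a typo, since the paper's own proof also sums from $i=0$.
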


\begin{proof}
Using Lemma \ref{lem:natural0},   we have
\begin{align*}
  \overline{\sigma^{(n+1)}}(x) = & \sum_{i=0}^n
  \overline{\sigma^{(n+1)}}(x_i) 
  =  \sum_{i=0}^n \overline{\tau^{(n+1)}}(k_i \otimes id_{(n+1,i)}) \\
  = & e^{-(n+1)\beta }\sum_{{\bf q} \in {\mathbb N}^{n+1}}\tau(({\bf u}_{\bf
  q} |
  \sum_{i=1}^n (k_i \otimes id_{(n+1,i)}){\bf u}_{\bf q} )_A).
\end{align*}
\end{proof}

\begin{prop}\label{prop:inequality}
We assume that $\tau$ satisfies $(\beta 2)$.
For $x \in (F^{(n)})^+$, we have
\[
   \overline{\sigma^{(n+1)}}(x) \le \overline{\sigma^{(n)}}(x).
\]
\end{prop}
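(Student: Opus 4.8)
The plan is to evaluate both canonical extensions on $x$ as explicit series in $\tau$ and then compare them term by term using condition $(\beta 2)$. Write $x=\sum_{i=0}^{n}x_i$ with $x_i\in B_i$, and choose $k_i$ with $x_i=\pi_K^{(i)}(k_i)$, where $k_0\in A$ and $k_i\in\K(X^{\otimes i})$ for $i\ge 1$. Put
\[
  K=\sum_{i=0}^{n}k_i\otimes id_{(n,i)}\in\cL(X^{\otimes n}).
\]
Since $B_n$ is an ideal of $\F^{(n)}$ (Lemma~\ref{lem:core}) with approximate unit $\{\hat e_F\}_{F\in\F(\mathbb{N}^n)}$, the same computation that proves Lemma~\ref{lem:natural2}, carried out one level down, gives
\[
  \overline{\sigma^{(n)}}(x)=e^{-n\beta}\sum_{{\bf q}\in\mathbb{N}^{n}}\tau(({\bf u}_{\bf q}|K{\bf u}_{\bf q})_A),
\]
while Lemma~\ref{lem:natural2} itself, combined with the identity $\sum_i k_i\otimes id_{(n+1,i)}=K\otimes id_1$ on $X^{\otimes n+1}=X^{\otimes n}\otimes X$ (valid since $id_{(n+1,i)}=id_{(n,i)}\otimes id_1$ for each $i$), gives
\[
  \overline{\sigma^{(n+1)}}(x)=e^{-(n+1)\beta}\sum_{{\bf q}'\in\mathbb{N}^{n+1}}\tau(({\bf u}_{{\bf q}'}|(K\otimes id_1){\bf u}_{{\bf q}'})_A).
\]

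Next I isolate the positivity needed to invoke $(\beta 2)$, which I expect to be the crux. For ${\bf q}\in\mathbb{N}^n$ set $a_{\bf q}=({\bf u}_{\bf q}|K{\bf u}_{\bf q})_A\in A$. Expanding each $k_i$ as a finite sum of rank-one operators $\theta_{\xi,\eta}$ and repeatedly applying the representation relations $\pi_X(x)^*\pi_X(y)=\pi_A((x|y)_A)$ and $\pi_A(a)\pi_X(x)=\pi_X(\phi(a)x)$, exactly as in the computation inside Proposition~\ref{prop:well}, one obtains the compression identity
\[
  \pi_A(a_{\bf q})=\pi_X^{(n)}({\bf u}_{\bf q})^*\,x\,\pi_X^{(n)}({\bf u}_{\bf q}).
\]
When $x\ge 0$ the right-hand side has the form $V^*xV$ with $V=\pi_X^{(n)}({\bf u}_{\bf q})$, hence is positive; since $\pi_A$ is injective, it is isometric, so $\pi_A(a_{\bf q})\ge 0$ forces $a_{\bf q}\ge 0$ in $A$ for every ${\bf q}$. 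This is the single place where the hypothesis $x\in(\F^{(n)})^+$ is used, and the main obstacle is precisely this passage from positivity of the abstract element $x\in\cO_X(J)$ to positivity of each module coefficient $a_{\bf q}$; the compression identity together with injectivity of $\pi_A$ is what delivers it.

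Finally I refactor the $(n+1)$-fold sum. Writing ${\bf q}'=({\bf q},j)$ and ${\bf u}_{{\bf q}'}={\bf u}_{\bf q}\otimes u_j$, the inner-product formula for the interior tensor product yields $({\bf u}_{{\bf q}'}|(K\otimes id_1){\bf u}_{{\bf q}'})_A=(u_j|\phi(a_{\bf q})u_j)_A$. For each fixed ${\bf q}$, condition $(\beta 2)$ applied to $a_{\bf q}\ge 0$ gives $\sum_{j=1}^{\infty}\tau((u_j|\phi(a_{\bf q})u_j)_A)\le e^{\beta}\tau(a_{\bf q})$. All terms are nonnegative (as $\phi(a_{\bf q})\ge 0$ and $\tau$ is positive), so the double series may be summed in either order, and
\[
  \overline{\sigma^{(n+1)}}(x)=e^{-(n+1)\beta}\sum_{{\bf q}}\sum_{j}\tau((u_j|\phi(a_{\bf q})u_j)_A)\le e^{-n\beta}\sum_{{\bf q}}\tau(a_{\bf q})=\overline{\sigma^{(n)}}(x),
\]
which is the asserted inequality.
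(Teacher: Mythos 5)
Your proof is correct, and it departs from the paper's at exactly the step you identified as the crux. Both arguments share the same skeleton: express $\overline{\sigma^{(n)}}(x)$ and $\overline{\sigma^{(n+1)}}(x)$ as the series $e^{-n\beta}\sum_{{\bf q}}\tau(a_{\bf q})$ and $e^{-(n+1)\beta}\sum_{{\bf q}}\sum_{j}\tau((u_j|\phi(a_{\bf q})u_j)_A)$ via the approximate-unit computations behind Lemma~\ref{lem:natural2} and its level-$n$ analogue, refactor the $(n+1)$-fold sum through $\bigl({\bf u}_{\bf q}\otimes u_j\,\big|\,(K\otimes id_1)({\bf u}_{\bf q}\otimes u_j)\bigr)_A=(u_j|\phi(a_{\bf q})u_j)_A$, and apply $(\beta 2)$ termwise, the resummation being legitimate because all terms are nonnegative. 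The difference is how positivity of the coefficients $a_{\bf q}$ is obtained. The paper writes $x=y^*y$ with $y=\sum_i\pi_K^{(i)}(h_i)$, invokes Katsura's multiplication formula (\cite{Kat2} Lemma 5.4) to compute explicit components $k_i$ of $x$, and verifies that $k=\sum_i k_i\otimes id_{(n,i)}$ factors as $\bigl(\sum_i h_i\otimes id_{(n,i)}\bigr)^*\bigl(\sum_j h_j\otimes id_{(n,j)}\bigr)\ge 0$ in $\cL(X^{\otimes n})$, so that $a_{\bf p}=({\bf u}_{\bf p}|k{\bf u}_{\bf p})_A\ge 0$ by module positivity alone, with no representation-theoretic input. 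You instead take an arbitrary decomposition $x=\sum_i\pi_K^{(i)}(k_i)$, prove the compression identity $\pi_A(a_{\bf q})=\pi_X^{(n)}({\bf u}_{\bf q})^*\,x\,\pi_X^{(n)}({\bf u}_{\bf q})$ (which does hold, by splitting ${\bf u}_{\bf q}$ at position $i$ for each summand, exactly as in Proposition~\ref{prop:well}), and pull positivity back through $\pi_A$, using that $\pi_A$ is injective for the universal representation (\cite{Kat2} Proposition 4.11, recorded in Section 2 of the paper). Your route buys economy: no $y^*y$ decomposition, no explicit product formula for core elements, and indifference to the choice of decomposition of $x$; its cost is reliance on faithfulness of $\pi_A$, whereas the paper's factorization is purely module-theoretic and would survive in any $J$-covariant representation. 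Two small tightenings: isometry of $\pi_A$ is not quite the right invocation --- what you need is that an injective *-homomorphism preserves spectra, together with the observation that $\pi_A(a_{\bf q})^*=\pi_A(a_{\bf q})$ forces $a_{\bf q}$ self-adjoint, whence $a_{\bf q}\ge 0$; and your identity $\sum_i k_i\otimes id_{(n+1,i)}=K\otimes id_1$ needs the $i=0$ case read through the paper's convention $k_0\otimes id_{(n+1,0)}=\phi(k_0)\otimes id_n=(k_0\otimes id_{(n,0)})\otimes id_1$, which holds. Neither affects the validity of the argument.
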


\begin{proof}
We take $x \in (F^{(n)})^+$.  Then we can write as $x = y^*y$
where $y \in \F^{(n)}$.  We also write as
$y = \sum_{i=0}^ny_i$ where $y_i \in B_i$, and write as
$y_i = \pi_K^{(i)}(h_i)$, $h_i \in \K(X^{\otimes i})$.

Then, by Lemma 5.4 in \cite{Kat2},  we have
\begin{align*}
  x = & \sum_{i=0}^{n} \sum_{j=0}^n y_i^* y_j 
   =   \sum_{i=0}^{n} \sum_{j=0}^n \pi_K^{(i)}(h_i)^*\pi_K^{(j)}(h_j) \\
   = & \sum_{i=0}^{n} \pi_K^{(i)}
   \left( \sum_{j=0}^i (h_j \otimes id_{(i,j)})^* h_i + h_i^*
  \sum_{j=0}^{i-1}(h_j \otimes id_{(i,j)})  \right)  
   =  \sum_{i=0}^{n} \pi_K^{(i)}(k_i),
\end{align*}
where
\[
  k_i = \sum_{j=0}^i (h_j \otimes id_{(i,j)})^* h_i + h_i^*
  \sum_{j=0}^{i-1} (h_j \otimes id_{(i,j)}).
\]
We put
\[
  k =  \sum_{i=0}^{n} k_i \otimes id_{(n,i)}.
\]

\begin{align*}
k = & \sum_{i=0}^{n}\left( \sum_{j=0}^i (h_j \otimes id_{(i,j)})^* h_i +
  h_i^*
  \sum_{j'=0}^{i-1} h_{j'} \otimes id_{(i,j')}  \right) \otimes 
id_{(n,i)} \\
  = &  \sum_{i=0}^{n}\left( \sum_{j=0}^i (h_j \otimes id_{(n,j)})^*
  (h_i \otimes id_{(n,i)}) + \sum_{j'=0}^{i-1}(h_i\otimes id_{(n,i)})^*
  (h_{j'} \otimes id_{(n,j')})  \right)  \\
  = & \left( \sum_{i=0}^n (h_i \otimes id_{(n,i)})\right)^*
     \left(\sum_{j=0}^{n} (h_j \otimes id_{(n,j)})\right)\\
  \ge & 0.
\end{align*}
As in the proof of Proposition \ref{prop:well},
we put ${\bf p}=(i_1,i_2,\dots,i_n)$, ${\bf u}_{\bf p}
= u_{i_1}\otimes u_{i_2} \otimes
\cdots \otimes u_{i_n}$, ${\bf p}'=(i_1,i_2,\dots,i_n,i_{n+1})$ and
${\bf u}_{{\bf p}'}= u_{i_1}\otimes u_{i_2} \otimes\cdots \otimes u_{i_n}
\otimes u_{i_{n+1}} = {\bf u}_{\bf p} \otimes u_{i_{n+1}}$.

We prepare the following:
For $k \in \K(X^{\otimes i})$ ($0 \le i \le n$), we have
\begin{align*}
  & e^{-(n+1)\beta}  \sum_{{\bf p}' \in {\mathbb N}^{n+1}}
\tau(({\bf u}_{{\bf p}'}|
(k \otimes id_{(n+1,i)}) {\bf u}_{{\bf p}'})_A)
  \\
= & e^{-(n+1)\beta}  \sum_{{\bf p} \in {\mathbb N}^n}
     \sum_{i_{n+1}=1}^{\infty} \tau(({\bf u}_{\bf p} \otimes  u_{i_{n+1}}|
    (k \otimes id_{(n+1,i)})({\bf u}_{\bf p}
  \otimes u_{i_{n+1}}))_A) \\
  = & e^{-(n+1)\beta}  \sum_{{\bf p} \in {\mathbb N}^n} 
\sum_{i_{n+1}=1}^{\infty}
    \tau((u_{i_{n+1}} |\phi(({\bf u}_{\bf p}|
(k \otimes id_{(n,i)}){\bf u}_{\bf p})_A) u_{i_{n+1}})_A).
\end{align*}

If $\tau$ satisfies ($\beta 2$) and $T \in \cL(X^{\otimes n})$ is 
positive,
we have
\[
\sum_{i_{n+1}=1}^{\infty} \tau ((u_{i_{n+1}} | ({\bf u}_{\bf p} |
T {\bf u}_{\bf p})_Au_{i_{n+1}})_A)
  \le e^{\beta} \tau(({\bf u}_{\bf p} | T {\bf u}_{\bf p})_A).
\]
Using these, we prove the Proposition.
Let $x \in (\F^{(n)}{})^+$.  We express $x=\sum_{i=1}^nx_i$ where $x_i 
\in B_i$.
For $x_i$ we take $k_i$ such that $x_i=\pi_K^{i}(k_i)$ and write as
$k = \sum_{i=0}^n  (k_i \otimes id_{(n+1,i)})$.
Then by Lemma \ref{lem:natural} and by the fact that $\tau$ satisfies
($\beta 2$), we have
\begin{align*}
  \overline{\sigma^{(n+1)}}(x)
= & e^{-(n+1)\beta} \sum_{{{\bf p}'}\in {\mathbb N}^{n+1}}
\tau(({\bf u}_{{\bf p}'}  | \sum_{i=0}^n
  (k_i \otimes id_{(n+1,i)}) {\bf u}_{{\bf p}'})_A) \\
= & e^{-(n+1)\beta} \sum_{{{\bf p}'} \in {\bf n}^{n+1}} \tau(({\bf 
u}_{{\bf p}'}
| (k \otimes id_1) {\bf u}_{{\bf p}'})_A) \\
= & e^{-(n+1)\beta} \sum_{{\bf p} \in {\mathbb 
N}^n}\sum_{i_{n+1}=1}^{\infty}
  \tau(({\bf u}_{\bf p} \otimes u_{i_{n+1}}| ((k{\bf u}_{\bf p}) \otimes
u_{i_{n+1}}))_A)
  \\
= & e^{-(n+1)\beta} \sum_{{\bf p} \in {\mathbb N}^n} 
\sum_{i_{n+1}=1}^{\infty}
    \tau((u_{i_{n+1}} |\phi(({\bf u}_{\bf p} |
k{\bf u}_{\bf p})_A )u_{i_{n+1}} )_A ) \\
\le & e^{-n \beta} \sum_{{\bf p}\in {\mathbb N}^n}
    \tau(({\bf u}_{\bf p} | k {\bf u}_{\bf p})_A) \\
  = & \overline{\sigma^{(n)}}(x).
\end{align*}
\end{proof}

Let $A$ be a \cst -algebra, $\alpha$ be an automorphic action of one
dimensional torus $\bT$ on $A$.
$A^{\rm anal}$ denotes the set $a \in A$ such that $t \to
\alpha_t(a)$ has an analytic extension to ${\mathbb C}$.

\begin{defn}
Let $\beta > 0$.  A state $\varphi$ of $A$ is called a $\beta$-KMS state
  on $A$ with respect to $\alpha$ if
\[
  \varphi(x\alpha_{it}(y)) = \varphi(yx)
\]
for $x \in A$ and $y \in D$, where $D$ is a dense *-subalgebra
contained in $A^{\rm anal}$.
\end{defn}

We denote by $E$ the conditional expectation of  $\cO_X(J)$ onto
the fixed point algebra $\cO_X(J)^{\mathbb T}$  by the gauge action.  We denote by
$\cO_X(J)^{(n)}$ the $n$-spectral subspace with respect to the gauge
action.

\begin{lemma}(\cite{PWY} Proposition 1.3 ) \label{lem:KMS2}
Fix $\beta > 0$.  If $\varphi$ is a $\beta$-KMS state on 
$\cO_X(J)$, 
then for $x$, $y \in \cO_X(J)^{(n)}$,
\begin{equation} \label{eq:KMS2}
  \varphi(x^*y) = e^{n \beta}\varphi(yx^*). 
\end{equation}
Conversely, if a tracial state $\varphi$ on $\cO_X(J)^{\mathbb T}$
satisfies  the equation (\ref{eq:KMS2}) for 
$x$, $y \in \cO_X(J)^{(n)}$,then
$\tau \circ E$ is a $\beta$-KMS state on $\cO_X(J)$.
The correspondence is one to one and conserves extreme points.
\end{lemma}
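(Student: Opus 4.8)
The plan is to exploit the standard correspondence, for a circle action, between $\beta$-KMS states and suitably scaled traces on the fixed-point algebra, the two structural inputs being that homogeneous elements are entire analytic for the gauge action and that $\beta$-KMS states are gauge-invariant. Throughout I write $\cO_X(J)^{(n)}$ for the $n$-th spectral subspace, so that $\alpha_t(y)=e^{int}y$ for $y\in\cO_X(J)^{(n)}$ and hence the analytic continuation of the gauge action satisfies $\alpha_{i\beta}(y)=e^{-n\beta}y$; in particular every such $y$ lies in $\cO_X(J)^{\mathrm{anal}}$, and finite sums of homogeneous elements form a dense $*$-subalgebra $D\subset\cO_X(J)^{\mathrm{anal}}$ (the adjoint of a degree-$n$ element has degree $-n$, and products add degrees).

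For the forward implication, let $\varphi$ be a $\beta$-KMS state and take $x,y\in\cO_X(J)^{(n)}$, so that $x^{*}y$ and $yx^{*}$ both lie in the fixed-point algebra $\cO_X(J)^{\mathbb{T}}=\cO_X(J)^{(0)}$. Since $y$ is analytic, I apply the defining KMS identity to the pair $(x^{*},y)$: the left-hand side becomes $\varphi(x^{*}\alpha_{i\beta}(y))=e^{-n\beta}\varphi(x^{*}y)$ while the right-hand side is $\varphi(yx^{*})$, and rearranging yields $\varphi(x^{*}y)=e^{n\beta}\varphi(yx^{*})$, which is exactly (\ref{eq:KMS2}). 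Taking $n=0$ in this same computation (where $\alpha_{i\beta}$ acts trivially) shows that $\varphi$ restricts to a trace on $\cO_X(J)^{\mathbb{T}}$. I will also record the standard fact that every $\beta$-KMS state is invariant under the dynamics, hence under $\alpha$, so that $\varphi=\varphi\circ E$; this is what makes a KMS state recoverable from its restriction to the fixed-point algebra.

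For the converse, let $\varphi$ be a tracial state on $\cO_X(J)^{\mathbb{T}}$ satisfying (\ref{eq:KMS2}) and put $\tilde{\varphi}=\varphi\circ E$, which is a state because $E$ is a unital positive conditional expectation. To verify the KMS identity $\tilde{\varphi}(x\alpha_{i\beta}(y))=\tilde{\varphi}(yx)$ it suffices, by linearity in $y$ over $D$ and norm-continuity in $x$, to treat $x\in\cO_X(J)^{(m)}$ and $y\in\cO_X(J)^{(n)}$ homogeneous. In that case the identity to be checked is $e^{-n\beta}\tilde{\varphi}(xy)=\tilde{\varphi}(yx)$, where $xy$ and $yx$ are homogeneous of degree $m+n$; since $E$ annihilates nonzero degrees, both sides vanish unless $m+n=0$. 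When $n=-m$ the elements $xy,yx$ lie in $\cO_X(J)^{\mathbb{T}}$, so $\tilde{\varphi}$ reduces to $\varphi$, and applying the hypothesis (\ref{eq:KMS2}) to the pair $(y^{*},x)$ in $\cO_X(J)^{(m)}$ (note $y^{*}\in\cO_X(J)^{(m)}$ since $y\in\cO_X(J)^{(-m)}$) gives $\varphi(yx)=e^{m\beta}\varphi(xy)=e^{-n\beta}\varphi(xy)$, precisely the required identity.

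Finally, the forward map $\varphi\mapsto\varphi|_{\cO_X(J)^{\mathbb{T}}}$ and the backward map $\varphi\mapsto\varphi\circ E$ are mutually inverse: gauge-invariance gives $\varphi=\varphi\circ E$ for a KMS state, while $E$ restricts to the identity on $\cO_X(J)^{\mathbb{T}}$. Both maps are affine, and a mutually inverse pair of affine bijections carries extreme points to extreme points, giving the last assertion. The step I expect to need the most care is the converse reduction: justifying that testing the KMS identity on the dense subalgebra $D$ of finite-degree analytic elements suffices, and organizing the degree bookkeeping so that $E$ kills every off-diagonal contribution, leaving exactly the degree-zero case governed by (\ref{eq:KMS2}) (whose $n=0$ instance already encodes the trace property).
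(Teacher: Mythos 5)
Your proposal is correct and is exactly the standard argument behind this lemma, which the paper itself does not prove but quotes from \cite{PWY} Proposition 1.3: homogeneous elements are entire analytic with $\alpha_{i\beta}(y)=e^{-n\beta}y$, the expectation $E$ annihilates all nonzero-degree products so that only the degree-zero case governed by (\ref{eq:KMS2}) survives, and gauge invariance of KMS states makes restriction and $\varphi\mapsto\varphi\circ E$ mutually inverse affine bijections, hence extreme-point preserving. The only micro-point to note is that when you apply (\ref{eq:KMS2}) to the pair $(y^*,x)$ of degree $m<0$ you are using the hypothesis for negative $n$, which follows from the stated case by taking adjoints (replacing $x,y$ by $x^*,y^*$ and relabeling), so there is no gap.
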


\begin{lemma} (Exel-Laca \cite{EL} Proposition 12.5)
Let $B$ be a unital \cst -algebras, $A$  a \cst -subalgebra of
$B$ containing unit, and $I$ a closed two sided ideal of $B$
such that $B = A + I$.  Let $\varphi$ be a state on $A$ and  $\psi$
 a positive linear functional on $I$.  
We assume that $\varphi(x) = \psi (x)$
for  $x \in A \cap I$ and that 
$\overline{\psi (x)} \le \varphi (x)$  for 
$x \in A$.  Then there exists a unique state $\Phi$ on $B$ such that
$\Phi|_A = \varphi$ and $\Phi|_I = \psi$.
\label{lemma:extension}
\end{lemma}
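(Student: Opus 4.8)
The plan is to build the desired functional directly from the decomposition $B = A + I$ and to obtain positivity by splitting it into two manifestly positive pieces. First I would treat well-definedness and uniqueness simultaneously. Every $b \in B$ can be written as $b = a + y$ with $a \in A$ and $y \in I$, and I set $\Phi(a+y) = \varphi(a) + \psi(y)$. If $a + y = a' + y'$ is a second such decomposition, then $a - a' = y' - y \in A \cap I$, so the compatibility hypothesis $\varphi = \psi$ on $A \cap I$ gives $\varphi(a) - \varphi(a') = \psi(y') - \psi(y)$; hence $\Phi$ is a well-defined linear functional. Any linear functional on $B$ restricting to $\varphi$ on $A$ and to $\psi$ on $I$ must agree with this $\Phi$ by linearity, which settles uniqueness at once. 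Since $1 \in A$, we have $\Phi(1) = \varphi(1) = 1$, so the whole issue is to prove $\Phi \ge 0$.

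For positivity I would write $\Phi = \overline{\psi} + \widehat{\chi}$. Here $\overline{\psi}$ is the canonical extension of $\psi$ from $I$ to $B$ recalled above: it is positive, since in the GNS picture $\overline{\psi}(b^*b) = \|\pi_\psi(b)\xi_\psi\|^2 \ge 0$, and it restricts to $\psi$ on $I$. The second summand $\widehat{\chi}$ is a positive functional on $B$, vanishing on $I$, to be manufactured from $\chi := \varphi - \overline{\psi}|_A$. The hypothesis $\overline{\psi}(x) \le \varphi(x)$ for $x \in A^+$ says exactly that $\chi$ is a positive linear functional on $A$.

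The key step is to push $\chi$ down to $B$. For $x \in A \cap I \subset I$ the canonical extension satisfies $\overline{\psi}(x) = \psi(x)$, while $\varphi(x) = \psi(x)$ by hypothesis, so $\chi(x) = 0$; thus $\chi$ annihilates the ideal $A \cap I$ of $A$ and descends to a positive functional $\widetilde{\chi}$ on $A/(A \cap I)$. The composite $A \hookrightarrow B \to B/I$ is a surjective $*$-homomorphism (surjective because $B = A + I$) with kernel $A \cap I$, yielding the canonical isomorphism $A/(A \cap I) \cong B/I$. Transporting $\widetilde{\chi}$ through this isomorphism and composing with the quotient map $q \colon B \to B/I$ produces a positive functional $\widehat{\chi} = \widetilde{\chi} \circ q$ on $B$ that vanishes on $I$ and restricts to $\chi$ on $A$. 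Positivity of $\widehat{\chi}$ uses that $q$ sends $B^+$ into $(B/I)^+$ and that $\widetilde{\chi} \ge 0$, the latter resting on the standard fact that every positive element of a quotient C*-algebra lifts to a positive element.

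It remains to verify the identity $\Phi = \overline{\psi} + \widehat{\chi}$. On $I$ we have $\widehat{\chi}|_I = 0$ and $\overline{\psi}|_I = \psi$, so the sum restricts to $\psi$; on $A$ we have $\widehat{\chi}|_A = \chi = \varphi - \overline{\psi}|_A$, so the sum restricts to $\varphi$. Thus $\overline{\psi} + \widehat{\chi}$ agrees with $\Phi$ on $A$ and on $I$, hence on all of $B = A + I$, and as a sum of two positive functionals $\Phi$ is positive; together with $\Phi(1) = 1$ this makes $\Phi$ a state. I expect the only genuinely delicate point to be the descent of the third paragraph: one must check that $\chi$ is both positive and annihilates $A \cap I$, so that it factors through $B/I$ as a \emph{positive} functional. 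Everything else reduces to routine bookkeeping with the decomposition $B = A + I$.
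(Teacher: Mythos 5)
Your proof is correct. One point of comparison is worth noting at the outset: the paper does not actually prove this lemma --- it is quoted verbatim from Exel--Laca \cite{EL} (Proposition 12.5) --- so there is no in-paper argument to measure you against; your write-up supplies the missing proof, and it follows what is essentially the Exel--Laca route. The decomposition $\Phi = \overline{\psi} + \widehat{\chi}$, with $\overline{\psi}$ the canonical GNS extension of $\psi$ to $B$ and $\widehat{\chi}$ obtained by factoring the positive functional $\chi = \varphi - \overline{\psi}|_A$ through the canonical isomorphism $A/(A \cap I) \cong B/I$ (surjectivity of $A \to B/I$ being exactly the hypothesis $B = A + I$), is the natural argument, and every step you flag as delicate is handled correctly: $\chi \ge 0$ is precisely the hypothesis read as $\overline{\psi}(x) \le \varphi(x)$ for $x \in A^+$ --- which is indeed the intended reading of the paper's typographically ambiguous ``$\overline{\psi(x)} \le \varphi(x)$'', the overline denoting the canonical extension rather than complex conjugation --- and $\chi$ kills $A \cap I$ because $\overline{\psi}$ restricts to $\psi$ on $I$ while $\varphi = \psi$ on $A \cap I$. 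Positivity of the induced functional on the quotient via lifting positive elements is standard, as you say. Your construction also meshes with how the paper uses the lemma downstream: the proof of the subsequent trace-extension corollary manipulates $\Phi(a+x) = \varphi(a) + \overline{\psi}(x + \cdots)$, i.e.\ it presupposes exactly the description of $\Phi$ you establish, and it invokes the same canonical-extension machinery (recalled in the paper from Blackadar, Prop.\ 6.4.16) that powers your positivity argument. Uniqueness is, as you observe, immediate from linearity and $B = A + I$; no issues there.
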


\begin{cor}
Let $B$ be a unital \cst -algebras, $A$ be a \cst -subalgebra of
$B$ containing unit, and $I$ be a closed two sided ideal of $B$
such that $B = A + I$.  Let $\varphi$ be a tracial state on $A$ and  $\psi$
 a trace on $I$.  
We assume that $\varphi(x) = \psi (x)$
for  $x \in A \cap I$ and that 
$\overline{\psi (x)} \le \varphi (x)$  for 
$x \in A$.  Then there exists a unique tracial state $\Phi$ on $B$ such that
$\Phi|_A = \varphi$ and $\Phi|_I = \psi$.
\label{lemma:trace-extension}
\end{cor}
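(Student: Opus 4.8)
The plan is to deduce this from Lemma~\ref{lemma:extension} and then upgrade the resulting state to a tracial one. Since a tracial state is in particular a state and a trace on $I$ is in particular a positive linear functional, all the hypotheses of Lemma~\ref{lemma:extension} are satisfied. Hence there is a unique state $\Phi$ on $B$ with $\Phi|_A = \varphi$ and $\Phi|_I = \psi$, and this already settles both existence and uniqueness for the corollary: any tracial state with the prescribed restrictions is in particular such a state, so it must coincide with $\Phi$. It therefore remains only to check that the $\Phi$ produced by the lemma is a trace.

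To verify $\Phi(bc) = \Phi(cb)$ for all $b,c \in B$, I would use that $B = A + I$. Writing $b = a_1 + i_1$ and $c = a_2 + i_2$ with $a_1, a_2 \in A$ and $i_1, i_2 \in I$, and expanding the product, the identity $\Phi(bc) = \Phi(cb)$ reduces by linearity to three elementary assertions: $\Phi(a_1 a_2) = \Phi(a_2 a_1)$, $\Phi(i_1 i_2) = \Phi(i_2 i_1)$, and the cross relation $\Phi(ai) = \Phi(ia)$ for $a \in A$, $i \in I$. The first follows at once from $\Phi|_A = \varphi$ and the assumption that $\varphi$ is tracial; the second from $\Phi|_I = \psi$ and the assumption that $\psi$ is a trace on $I$, noting that $i_1 i_2, i_2 i_1 \in I$.

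The main point is the cross term: for $a \in A$ and $i \in I$, both $ai$ and $ia$ lie in $I$ (as $I$ is an ideal), so $\Phi(ai) = \psi(ai)$ and $\Phi(ia) = \psi(ia)$, and I must show $\psi(ai) = \psi(ia)$. Here the trace property of $\psi$ on $I$ does not apply directly, since in general $a \notin I$; this is the step that requires care. I would fix an approximate unit $\{e_\lambda\}$ of $I$. Since $e_\lambda i \to i$ in norm, we have $a e_\lambda i \to ai$, hence $\psi(a e_\lambda i) \to \psi(ai)$ by continuity of the positive linear functional $\psi$. Now $a e_\lambda \in I$ and $i \in I$, so the trace property of $\psi$ on $I$ gives $\psi(a e_\lambda \cdot i) = \psi(i \cdot a e_\lambda) = \psi(ia\, e_\lambda)$. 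Finally $ia \in I$ yields $ia\, e_\lambda \to ia$, whence $\psi(ia\, e_\lambda) \to \psi(ia)$, and passing to the limit gives $\psi(ai) = \psi(ia)$.

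Assembling the three cases shows $\Phi(bc) = \Phi(cb)$ for all $b,c \in B$, so $\Phi$ is a trace and hence the desired tracial state. I expect the approximate-unit slide in the cross term to be the only nonroutine step; in particular, the extension hypothesis $\overline{\psi}(x) \le \varphi(x)$ is used only to invoke Lemma~\ref{lemma:extension} for the existence of $\Phi$, and is not needed again for traciality.
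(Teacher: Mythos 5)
Your proposal is correct, and it reaches the conclusion by a genuinely more elementary route than the paper. Both proofs begin identically: invoke Lemma~\ref{lemma:extension} to get the unique state $\Phi$ with $\Phi|_A=\varphi$, $\Phi|_I=\psi$ (which, as you note, already settles uniqueness among tracial states), then expand $\Phi(bc)-\Phi(cb)$ over the decomposition $B=A+I$, where everything reduces to traciality of $\varphi$ on $A$, traciality of $\psi$ on $I$, and the cross relation $\psi(ai)=\psi(ia)$ for $a\in A$, $i\in I$. The difference lies in how the cross terms are handled. The paper passes to the GNS representation $(\pi_\psi,H_\psi,\xi_\psi)$ of $\psi$, extends $\pi_\psi$ to $\pi:B\to B(H_\psi)$, and shows that the vector state $\psi'(m)=(m\xi_\psi\,|\,\xi_\psi)$ is tracial on the von Neumann algebra $\pi(I)''$, whence the canonical extension $\overline{\psi}$ is tracial on all of $B$; the cross terms then vanish because they lie in $I$, where $\Phi$, $\psi$ and $\overline{\psi}$ agree. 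You instead stay at the C*-level: with an approximate unit $\{e_\lambda\}$ of $I$ you write $\psi(ai)=\lim_\lambda\psi(ae_\lambda\cdot i)=\lim_\lambda\psi(i\cdot ae_\lambda)=\psi(ia)$, using only that $ae_\lambda, ai, ia\in I$, the trace property of $\psi$ within $I$, and the automatic norm continuity of the positive functional $\psi$. Your argument is complete and avoids the GNS/von~Neumann machinery entirely; what the paper's version buys is the slightly stronger and reusable fact that the canonical extension $\overline{\psi}$ is itself tracial on $B$, which fits the framework of canonical extensions ($\overline{\tau^{(n)}}$, $\overline{\sigma^{(n)}}$) used throughout Section~3. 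Your closing observation is also accurate: the hypothesis $\overline{\psi}(x)\le\varphi(x)$ on $A$ enters only through the existence part of Lemma~\ref{lemma:extension}, not through the traciality verification.
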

\begin{proof}
Let $\Phi$ be the state extension on $B$ constructed in 
Lemma \ref{lemma:extension}. 
All we have to show is that  $\Phi$ is tracial. 
Consider GNS representation 
$(\pi_{\psi},H_{\psi}, \xi_{\psi})$ of $I$. Let 
$\pi : B \rightarrow B(H_{\psi})$ be the extension of 
$\pi_{\psi}$.   
The canonical extension  
$\overline{\psi}$ of $\psi$ 
to $B$
is defined as $\overline{\psi}(b) 
= (\pi(b)\xi_{\psi} \ | \ \xi_{\psi})$ for $b \in B$. 
Define a state $\psi'$
on the von Neumann algebra $\pi(I)''$ by 
$\psi'(m) = (m\xi_{\psi} \ | \ \xi_{\psi})$, for 
$m \in \pi(I)''$. Since $\psi$ is tracial,  $\psi'$ 
is also tracial. Since 
$\overline{\psi}(b) = (\pi(b)\xi_{\psi} \ | \ \xi_{\psi})$, 
the canonical extension  $\overline{\psi}$ is also 
tracial.  Hence for $a,b \in A$ and $x,y \in I$, we have 
\begin{align*}
  \Phi((a + x)(b + y)) = & \Phi(ab + xb + ay + xy)
   = \varphi(ab) + \overline{\psi}(xb + ay + xy)  \\
   = & \varphi(ba) + \overline{\psi}(bx +ya +yx) 
   = \Phi((b + y)(a + x)), 
\end{align*}
because $\varphi$ and  $\overline{\psi}$ are tracial. 
Thus $\Phi$ is also tracial. 

\end{proof}

Under these preparations, 
we shall generalize Laca-Neshevyev's theorem of the construction of 
KMS states on Cuntz-Pimsner algebras as follows:  

\begin{thm} \label{th:KMS}
Let $X$ be a \cst -correspondence over $A$ of finite-degree type 
with degree $N=d(X)$ and  
$\{u_{i}\}_{i=1}^{\infty}$  a countable basis of $X$. 
Let $J$ be an ideal of $A$ contained in $J_X$.
Let $\beta > 0$.  Let $\varphi$ be  a $\beta$-KMS state  on
a relative Cuntz-Pimsner algebra $\O_X(J)$ 
with respect to the gauge action $\gamma$.  
Then the restriction of $\varphi$ to $\pi_A(A)$
is a tracial state on $A$ satisfying
$\beta$-condition.  Conversely,a tracial state on $A$ satisfying
$\beta$-condition extends to a $\beta$-KMS state on $\O_X(J)$.
The correspondence between the $\beta$-KMS states on $\cO_{X}(J)$ and
the tracial states on $A$ satisfying $\beta$-condition given by
$\varphi \to \varphi|_{\pi_A(A)} \circ \pi_A$ is bijective and
affine.
\end{thm}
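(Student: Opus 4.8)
The plan is to pass to the fixed-point algebra $\cO_X(J)^{\mathbb T}=\overline{\bigcup_n\F^{(n)}}$ and use \lemref{lem:KMS2}, which turns the problem into constructing and characterizing tracial states $\Phi$ on the core obeying $\Phi(x^*y)=e^{n\beta}\Phi(yx^*)$ for $x,y\in\cO_X(J)^{(n)}$. Since a $\beta$-KMS state $\varphi$ is gauge-invariant we have $\varphi=\varphi\circ E$, so $\varphi$ is completely determined by its restriction to the core, and the two directions of the theorem amount to reading this restriction off from, respectively into, a tracial state $\tau$ on $A$.

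For the forward direction I set $\tau=\varphi\circ\pi_A$. Taking $n=0$ in \eqref{eq:KMS2} shows $\varphi$ is tracial on $B_0$, so $\tau$ is a tracial state. To get $(\beta1)$, for $a\in J$ I use $J$-covariance to write $\pi_A(a)=\pi_K(\phi(a))=\sum_i\pi_X(\phi(a)u_i)\pi_X(u_i)^*$ and apply the $n=1$ case of \eqref{eq:KMS2} to $x=\pi_X(u_i)$, $y=\pi_X(\phi(a)u_i)$; this rewrites $\tau((u_i|\phi(a)u_i)_A)$ as $e^{\beta}\varphi(\pi_X(\phi(a)u_i)\pi_X(u_i)^*)$, and summing over $i$ returns $e^{\beta}\tau(a)$. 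For $(\beta2)$, given $a=c^*c\ge0$ I apply \eqref{eq:KMS2} to $x=y=\pi_X(\phi(c)u_i)=\pi_A(c)\pi_X(u_i)$ and use $\sum_{i\in F}\pi_X(u_i)\pi_X(u_i)^*=\pi_K\!\left(\sum_{i\in F}\theta_{u_i,u_i}\right)\le 1$ with positivity of $\varphi$ to bound the partial sums by $e^{\beta}\tau(cc^*)=e^{\beta}\tau(a)$.

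The converse is the substantial part, and I expect the positivity (domination) at each inductive step to be the main obstacle. Starting from $\Phi_0=\sigma^{(0)}$ on $\F^{(0)}=B_0$, I build tracial states $\Phi_n$ on $\F^{(n)}$ by induction, keeping $\Phi_n|_{B_i}=\sigma^{(i)}$ for every $i\le n$. At the step $\F^{(n)}=\F^{(n-1)}+B_n$ I apply the trace-extension Corollary~\ref{lemma:trace-extension} (after adjoining a unit if necessary) with subalgebra $\F^{(n-1)}$, ideal $B_n$, state $\varphi=\Phi_{n-1}$ and trace $\psi=\sigma^{(n)}$. The agreement on $\F^{(n-1)}\cap B_n=B_{n-1}'$ is precisely \propref{prop:well} together with $\Phi_{n-1}|_{B_{n-1}}=\sigma^{(n-1)}$. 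For the domination $\overline{\sigma^{(n)}}\le\Phi_{n-1}$ on $(\F^{(n-1)})^+$ I combine two inputs: \propref{prop:inequality} furnishes $\overline{\sigma^{(n)}}\le\overline{\sigma^{(n-1)}}$, while the standard minimality of the canonical extension among positive extensions gives $\overline{\sigma^{(n-1)}}\le\Phi_{n-1}$ — indeed $\Phi_{n-1}$ is itself a positive extension of $\sigma^{(n-1)}$ from the ideal $B_{n-1}$, and writing the GNS vector of $\Phi_{n-1}$ as $\xi=P\xi+(1-P)\xi$ for the projection $P$ onto the $\pi(B_{n-1})$-cyclic subspace exhibits $\Phi_{n-1}-\overline{\sigma^{(n-1)}}$ as the compression to $(1-P)$, hence positive. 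Chaining the two inequalities yields the domination, so the Corollary produces a unique tracial state $\Phi_n$ restricting to $\Phi_{n-1}$ and to $\sigma^{(n)}$; the coherent family $\{\Phi_n\}$ assembles into a tracial state $\Phi$ on the core.

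It remains to verify \eqref{eq:KMS2} for $\Phi$. On the generating elements $x=\pi_X^{(n)}(\xi)$, $y=\pi_X^{(n)}(\eta)$ of $\cO_X(J)^{(n)}$ one has $x^*y=\pi_A((\xi|\eta)_A)\in B_0$ and $yx^*=\pi_K^{(n)}(\theta_{\eta,\xi})\in B_n$, so $\Phi(x^*y)=\tau((\xi|\eta)_A)$ while, using traciality of $\tau$ and the reconstruction identity $\sum_{\bf p}{\bf u}_{\bf p}({\bf u}_{\bf p}|\eta)_A=\eta$, $\Phi(yx^*)=\sigma^{(n)}(\theta_{\eta,\xi})=e^{-n\beta}\tau((\xi|\eta)_A)$; the factor $e^{n\beta}$ is exactly the normalization of $\tau^{(n)}$, and density and continuity extend the relation to all of $\cO_X(J)^{(n)}$. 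By \lemref{lem:KMS2}, $\Phi\circ E$ is the desired $\beta$-KMS state. Finally, $\varphi\mapsto\tau$ is visibly affine; it is injective because a gauge-invariant KMS state satisfies $\varphi|_{B_n}=\sigma^{(n)}$ (the same generator computation), so it is determined by $\tau$, and it is surjective by the construction above, the inverse being affine as well. This gives the asserted affine bijection.
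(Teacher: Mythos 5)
Your proposal is correct, and its global architecture coincides with the paper's: reduction to the core via Lemma~\ref{lem:KMS2}; the same generator computations for $(\beta 1)$ and $(\beta 2)$ (the paper sandwiches with $\pi_A(a)^{1/2}$ where you factor $a=c^*c$ --- the same estimate); and the inductive extension over $\F^{(n)}=\F^{(n-1)}+B_n$ using Proposition~\ref{prop:well} for agreement on $\F^{(n-1)}\cap B_n=B_{n-1}'$, Proposition~\ref{prop:inequality} for the decreasing property, and the Exel--Laca extension Lemma~\ref{lemma:extension} with its tracial Corollary~\ref{lemma:trace-extension}. The one genuine divergence is how you secure the domination hypothesis $\overline{\sigma^{(n)}}\le\Phi_{n-1}$ on $\F^{(n-1)}$. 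The paper carries the inequality $\overline{\sigma^{(n)}}\le\omega^{(n)}$ on $\F^{(n-1)}$ as part of the induction hypothesis and propagates it to all of $\F^{(n)}$ by the explicit decomposition $x=y+z$ with $y\in\F^{(n-1)}$, $z\in B_n$, exploiting that $\overline{\sigma^{(n)}}$ and $\omega^{(n)}$ both restrict to $\sigma^{(n)}$ on the ideal $B_n$ (the chain (\ref{eq:n-inequality2})). You instead invoke the abstract minimality of the canonical extension among positive extensions from an ideal, so that $\overline{\sigma^{(n-1)}}\le\Phi_{n-1}$ is automatic from $\Phi_{n-1}|_{B_{n-1}}=\sigma^{(n-1)}$ alone, and nothing but the interface conditions $\Phi_n|_{B_i}=\sigma^{(i)}$ need be tracked in the induction. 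Your GNS justification of minimality is sound: since $B_{n-1}$ is an ideal in $\F^{(n-1)}$ (Lemma~\ref{lem:core}), the projection $P$ onto $\overline{\pi(B_{n-1})\xi}$ lies in the commutant $\pi(\F^{(n-1)})'$, one has $\pi(e_\lambda)\xi\to P\xi$ for an approximate unit $\{e_\lambda\}$ of $B_{n-1}$, and $\Phi_{n-1}-\overline{\sigma^{(n-1)}}$ is the positive functional $x\mapsto(\pi(x)(1-P)\xi \mid (1-P)\xi)$. Your variant buys a leaner induction with a reusable general lemma; the paper's computation buys self-containedness at the cost of a heavier hypothesis --- indeed (\ref{eq:n-inequality2}) is essentially a hands-on instance of the minimality fact you cite.

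One point to tighten, though it is a gloss you share with the paper rather than a defect of your route: the elements $\pi_X^{(n)}(\xi)$ do \emph{not} span a dense subspace of the spectral subspace $\cO_X(J)^{(n)}$, which is the closed span of products $\pi_X^{(n+k)}(\zeta)\pi_X^{(k)}(\eta)^*$ for all $k\ge 0$; so ``density and continuity'' do not directly extend your verification of (\ref{eq:KMS2}) from the pairs you treat. The verification for general spanning pairs does go through, using traciality of $\Phi$ together with $\Phi|_{B_m}=\sigma^{(m)}$ and the $e^{-m\beta}$ scaling on each $B_m$, and the paper's own proof is equally terse at exactly this step.
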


\begin{proof}
Let $\varphi$ be a $\beta$-KMS state on $\cO_X(J)$.
The restriction of $\varphi$ to $\cO_X(J)^{\mathbb T}$ is a tracial
state and satisfies the condition (\ref{eq:KMS2}) in Lemma \ref{lem:KMS2}.
 For $a \in J$, we have $\phi(a) \in 
\K(X)$ and
$\pi_A(a) = \pi_K(\phi(a))$.
Since the equation
\[
  \sum_{i=1}^{n}\pi_X(u_i)\pi_X(u_i)^*\pi_A(a)  =
  \pi_{K}(\sum_{i=1}^{n}\theta_{u_i,u_i} \phi(a))
\]
holds and $\{\, \sum_{i=1}^{n}\theta_{u_i,u_i}\,\}_{n=1}^{\infty}$ is an
approximate unit of $\K(X)$, we have
\[
  \lim_{n \to \infty}\sum_{i=1}^{n}\pi_X(u_i)\pi_X(u_i)^*\pi_A(a)
    = \pi_A(a).
\]

Using this, we have
\begin{align*}
   \sum_{i=1}^{\infty}\varphi(\pi_A((u_i | \phi(a) u_i)_A))
   = & \sum_{i=1}^{\infty}\varphi(\pi_X(u_i)^* \pi_A(a) \pi_X(u_i)) \\
   = & e^{\beta} \sum_{i=1}^{\infty}
  \varphi(\pi_X(u_i)\pi_X(u_i)^*\pi_A(a))  \\
   = & e^{\beta} \varphi\left( \lim_{n \to \infty} \left(
 \left(\sum_{i=1}^{n}\pi_X(u_i)\pi_X(u_i)^*\right)\pi_A(a)\right)\right) \\
   = & e^{\beta} \varphi(\pi_A(a)).
\end{align*}
This shows that $\varphi \circ \pi_A$ satisfies ($\beta 1$).
Let $a \in A^+$.  We have
\begin{align*}
  \sum_{i=1}^{n} (\phi \circ \pi_A)( (u_i | \phi(a) u_i )_A )
  = & \sum_{i=1}^n \varphi(\pi_X(u_i)^*\pi_A(a)\pi_X(u_i))  \\
   = &  e^{\beta} \sum_{i=1}^n \varphi(\pi_A(a) \pi_X(u_i)\pi_X(u_i)^*)
  \\
   & =   e^{\beta}  \varphi\left(\pi_A(a)^{1/2}\left(
  \sum_{i=1}^n \pi_X(u_i)\pi_X(u_i)^*\right) \pi_A(a)^{1/2}  \right) \\
  & \le e^{\beta} \varphi(\pi_A(a)).
\end{align*}
As $n \to \infty$, we can show that $\varphi \circ \pi_A$ satisfies
($\beta 2$).

Conversely, we take a tracial state $\tau$ on $A$ satisfying ($\beta 1$)
and ($\beta 2$).  We construct a tracial state $\omega$ on
$\cO_{X}(J)^{\mathbb T}$ satisfying the condition (\ref{eq:KMS2}) in
Lemma \ref{lem:KMS2} and $\omega|_{\pi_A(A)}\circ \pi_A = \tau$ holds.

We construct a tracial state  $\omega^{(n)}$ on $\F^{(n)}$ for each
natural integer $n$ inductively.
We put $\omega^{(0)}=\tau \circ \pi_A^{-1}$.
We assume that there exists a tracial state $\omega^{(n)}$ on $\F^{(n)}$
such that
\[
  \omega^{(n)} |_{B_n} = \tau^{(n)}\circ (\pi_K^{(n)})^{-1} = \sigma^{(n)}
\]
and
\[
  \overline{\sigma^{(n)}} \le \omega^{(n)} \qquad \text{on $\F^{(n-1)}$}.
\]
By Proposition \ref{prop:well}, for $x \in \F^{(n)} \cap B_{n+1} = B_n
  \cap B_{n+1}$ we have
\begin{equation*}
  \sigma^{(n+1)}(x) = \sigma^{(n)}(x) = \omega^{(n)}(x).
\end{equation*}
For $x \in \F^{(n)}$, by Proposition \ref{prop:inequality}, we have
\begin{equation} \label{eq:n-inequality}
  \overline{\sigma^{(n+1)}}(x^*x)  \le \overline{\sigma^{(n)}}(x^*x).
\end{equation}
By the assumption of induction, for $x \in \F^{(n-1)}$,
\[
  \overline{\sigma^{(n)}}(x^*x) \le \omega^{(n)}(x^*x).
\]
Let $x \in \F^{(n)}$ be written as $x=y+z$ where $y \in \F^{(n-1)}$
and $z \in B_{n}$.
Then we have
\begin{align}\label{eq:n-inequality2}
\overline{{\sigma}^{(n)}}(x^*x) & = \overline{{\sigma}^{(n)}}(
  (y+z)^*(y+z) ) \nonumber \\
             & = \overline{{\sigma}^{(n)}}(y^*y + y^*z + z^*y + z^*z)
                   \nonumber\\
             & = \overline{{\sigma}^{(n)}}(y^*y) + \sigma^{(n)}(y^*z
            + z^*y + z^*z) \nonumber \\
             & = \overline{{\sigma}^{(n)}}(y^*y) + \omega^{(n)}(y^*z
             + z^*y + z^*z) \nonumber \\
             & \le \omega^{(n)}(y^*y) + \omega^{(n)}(y^*z + z^*y + z^*z)
                    \nonumber \\
             & = \omega^{(n)}(y^*y + y^*z + z^*y + z^*z )
                     \nonumber\\
              & = \omega^{(n)}(x^*x).
\end{align}
Using (\ref{eq:n-inequality}) and (\ref{eq:n-inequality2}),
for $x \in \F^{(n)}$ we have
\[
   \overline{\sigma^{(n+1)}}(x^*x) \le \omega^{(n)}(x^*x).
\]
By Lemma \ref{lemma:extension}, there exists a state
$\omega^{(n+1)}$ on $\F^{(n+1)} = \F^{(n)} + B_{n+1}$ such
that $\omega^{(n+1)}|_{\F^{(n)}} = \omega^{(n)}$ and
$\omega^{(n+1)}|_{B_{n+1}} = \sigma^{(n+1)}$.
Since $\omega^{(n)}$ and $\sigma^{(n+1)}$ are traces,
we have that $\omega^{(n+1)}$ is a trace by 
Lemma \ref{lemma:trace-extension}. 

We note that that $\overline{\sigma^{(n+1)}} \le \omega^{(n+1)}$ on 
$\F^{(n)}$
because $\omega^{(n+1)}=\omega^{(n)}$ on $\F^{(n)}$.

By a mathematical induction argument, there exists a desired sequence
$\{\omega^{(n)}\}_{n=1,2,\dots}$ of tracial states on $\F^{(n)}$.
We define $\omega$ on  $\bigcup_{n=0}^{\infty} \F^{(n)}$ by
$\omega|_{\F^{(n)}} =\omega^{(n)}$, and extend it to
the closure $\cO_X(J)^{\mathbb T}$ by continuity.
Then $\omega$ is a tracial state on $\cO_X(J)^{\mathbb T}$.

Since $\omega(\pi_A(a) + \pi_K^{(1)}(k_1) + \cdots + \pi_K^{(n)}(k_n))
= \tau(a) + \tau^{(1)}(k_1)+ \cdots
  + \tau^{(n)}(k_n)$ for $a \in A$ and $k_i \in \K(X^{\otimes i})$,
$\omega$ does not depend on the choice of the basis
  $\{u_k\}_{k=1}^{\infty}$ we have used in the construction.

 From $\omega(\theta_{x_1,\cdots,x_n,y_1,\cdots,y_n}) = e^{-n \beta}\tau
  ((y_1 \otimes \cdots \otimes y_n | x_1 \otimes \cdots \otimes x_n)_A)$,
$\omega$ satisfies the condition (\ref{eq:KMS2}) of
Lemma \ref{lem:KMS2}. Let $E:\cO_X(J) \rightarrow  \cO_X(J)^{\mathbb T}$ 
be the canonical conditional expectation. Put $\varphi = \omega \circ E$. 
Then $\varphi$ is a $\beta$-KMS state of $\cO_X(J)$ such that 
its restriction to $A$ is $\tau$. 
\end{proof}

\section{KMS states on the \cst -algebra associated with finite graphs}

 Cuntz-Krieger algebras are 
generalized as graph \cst -algebras associated with general graphs 
having sinks and sources, which are  studied for example in 
Kumujian-Pask-Raeburn \cite{KPR}, 
Kumujian-Pask-Raeburn-Renault \cite{KPRR} and Fowler-Laca-Raeburn 
\cite{FLR}.  
As in Katsura \cite{Kat2}
\cst -algebras associated with graphs with
possibly sources and sinks are expressed as \cst -algebras associated
with \cst -correspondences canonically constructed from graphs. But
the left actions are not necessarily injective,  

Using the construction and Theorem \ref{th:KMS} in the preceding
section, we can describe KMS states on finite-graph \cst -algebras. 

Let $E=(E^0,E^1)$ be a finite graph without multiple edges.
We denote by $s$ the source map and by $r$ the range map of $E$. 
A vertex $v \in E^0$ is called a sink if $s^{-1}(v) = \emptyset$ and 
$v \in E^0$ is called a source if $r^{-1}(v) = \emptyset$

\begin{defn}
The graph \cst -algebra C${}^*(E)$ of a finite graph $E$ is
the universal \cst -algebra generated by mutually orthogonal projections
$\{p_v\}_{v \in E^{0}}$ and partial isometries $\{q_e\}_{e \in E^1}$
with orthogonal ranges, such that $q_e^*q_e = p_{r(e)}$, $q_eq_e^*
\le p_{s(e)}$ for $e \in E^1$ and
\[
  p_v = \sum_{e \in s^{-1}(v)} q_eq_e^* \qquad \text{ for \ } 0 < 
|s^{-1}(v)|.
\]
\end{defn}

We put
\[
  E^{0}_r = \{\, v \in E^{(0)}\,\, |\,\, |s^{-1}(v)| > 0 \,\}, \quad
  E^{0}_s = \{\, v \in E^{(0)} \,\,|\,\, |s^{-1}(v)| = 0\,\}.
\]

We note that $E^{0}_s$ is the set of sinks of $E$

Let $A = {\rm C}(E^0)$ and $X={\rm C}(E^1)$.
For $\xi$, $\eta \in X$ and $f \in A$, we put
\begin{align*}
  (\xi | \eta)_A(v) & = \sum_{e \in r^{-1}(v)}
   \overline{\xi(e)} \eta(e) \qquad \forall v \in E^0 \\
  (\xi f)(e) & = \xi(e) f(r(e)) \qquad \forall e \in E^1.
\end{align*}
Then $X$ is a Hilbert \cst -module over $A$.

We define $\phi(f)$ for $f \in A$ by
\[
  \phi(f) \xi (e) = f(s(e)) \xi(e),
\]
where $\xi \in X$.  Then $\phi$ is a *-representation
of $A$ in $\cL(X_A)$ and
$(X,\phi)$ is a \cst -correspondence over $A$.
As in atsura \cite{Kat2}, it holds that $\phi^{-1}(K(X)) = {\rm C}(E^0) =A$,
$\ker (\phi) = {\rm C}(E^{0}_s)$ and $J_X={\rm C}(E^{0}_r)$.
The left action $\phi$ on $X$ is injective if $E$ has no sinks. The 
 \cst -correspondence $X$ is full if $E$ has no sources.  
We denote by $\cO_{E}$ the (relative) Cuntz-Pimsner algebra of 
the \cst -correspondence $X$ with $J = J_X$. 
Then $\cO_{E}$ is isomorphic to the graph \cst -algebra ${\rm C}^*(E)$
(\cite{Kat1}).

We denote by $\gamma$ the gauge action of ${\mathbb T}$ on $\cO_{E}$.
Let $\beta$ be a positive number.
We consider $\beta$-KMS states on the \cst -algebra
$\cO_{E}$ with respect to the gauge action $\gamma$.

We number vertices of $E$ from $1$ to $n$.
Let $e$ be an edge such that $s(e)=i$ and $r(e) = j$.  Since multiples 
edges
are not permitted, we write $e$ as $(i,j)$.
We denote by $\chi_{(i,j)}$ the characteristic function of the edge 
$(i,j)$.
Then $\{\chi_{(i,j)} |  (i,j) \in E^{1} \}$ constitutes a basis of $X$ 
on $A$.

By Theorem \ref{th:KMS}, there exists a bijective correspondence between
the $\beta$-KMS states on $\cO_{E}$ and the tracial states $\tau$ on the
commutative \cst -algebra $A$ such that
\begin{align*}
\sum_{(i,j) \in E^1}\tau((\chi_{(i,j)} | \phi(f)\chi_{(i,j)})_A) & = 
e^{\beta} \tau(f)
\qquad f \in J_X ={\rm C}(E^0_r)\\
  \sum_{(i,j) \in E^1} \tau((\chi_{(i,j)} | \phi(f)\chi_{(i,j)})_A) & 
\le e^{\beta} \tau(f)
\qquad f \in A^+={\rm C}(E^0)^+.
\end{align*}
The correspondence is affine.

We denote by $D = [a_{i,j}]_{1 \le i,j \le n}$ the adjacency matrix of
the graph $E$ i.e.
\[  a_{ij}  =
  \begin{cases}
   &  1 \qquad \text{ there exists $e \in E^1$ such that $s(e) = j$,
     $r(e) = i$ } \\
   &  0 \qquad \text{ otherwise},
  \end{cases}
\]
and we denote by $\chi_j$ the characteristic function of the
vertex $j$.

Then we have
\begin{align*}
    (\chi_{(i,j)}|\phi(\chi_l) \chi_{(i,j)})_A(k)
  = & \sum_{r(e)=k}  \overline{\chi_{(i,j)}(e)} \phi(\chi_l)
     \chi_{(i,j)}(e)  \\
  = & \delta_{l,i} \delta_{j,k} a_{k,l}.
\end{align*}
We put $f = \sum_{l=1}^{n} f_l \chi_l$.  Then we have

\[
  (\chi_{(i,j)} | \phi(f) \chi_{(i,j)})_A (k)
   = \sum_{l=1}^{n} f_l \delta_{l,i} \delta_{j.k} a_{k,l}.
\]

For $\tau \in A^*$, we write as $\tau = {}^t(\tau_1,\dots,\tau_n)$.
We rewrite $(\beta 1)$ as:
\begin{align*}
    \sum_{(i,j) \in E^1} \tau((\chi_{(i,j)} | \phi(f) \chi_{(i,j)})_A)
  = & \sum_{(i,j) \in E^1} \sum_{k=1}^n \tau_k
  \sum_{l=1}^n f_l \delta_{l,i} \delta_{j,k} a_{k,l} \\
  = & \sum_{k=1}^n \sum_{l=1}^n \tau_k f_l a_{k,l} 
  =  \sum_{l=1}^n \left(\sum_{k=1}^n a_{k,l}\tau_k\right)f_l.
\end{align*}

We put $B={}^t D$.  Let $\tau$ be a tracial state on $A$.
Then $(\beta 1)$ and $(\beta 2)$ are written as
\begin{align}
  (B \tau | f) & = e^{\beta} (\tau | f) \qquad f \in {\rm C}(E^0_r)
\label{eq:ad1} \\
  (B \tau | f) & \le  e^{\beta} (\tau | f) \qquad f \in {\rm C}(E^0)^+.
\label{eq:ad2}
\end{align}

\begin{lemma}
If $E$ is a finite graph, $(\beta 1)$ implies $(\beta 2)$.
\end{lemma}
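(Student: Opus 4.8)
The plan is to work entirely in the finite-dimensional picture already set up just above the statement. Since $A = {\rm C}(E^0)$ is commutative over the $n$-point set $E^0$, a tracial state $\tau$ is nothing but a probability vector ${}^t(\tau_1,\dots,\tau_n)$ with each $\tau_l \ge 0$, and the pairing is $(\tau|f) = \sum_l \tau_l f_l$. The conditions $(\beta 1)$ and $(\beta 2)$ have been rewritten as $(B\tau|f) = e^{\beta}(\tau|f)$ for $f \in {\rm C}(E^0_r)$ and $(B\tau|f) \le e^{\beta}(\tau|f)$ for $f \in {\rm C}(E^0)^+$, where $B = {}^t D$. The strategy is to show that $(\beta 1)$, although it is only an equality against functions supported on the non-sink set $E^0_r$, already forces the full inequality $(\beta 2)$ against every nonnegative $f$, the gap being carried by the sink vertices together with positivity of $\tau$.

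First I would test $(\beta 1)$ on the coordinate functions $\chi_l$ for each non-sink $l \in E^0_r$; since $\chi_l \in {\rm C}(E^0_r)$, this yields the componentwise equalities $(B\tau)_l = e^{\beta}\tau_l$ for every $l \in E^0_r$. Next I would record the structural fact about sinks: if $v \in E^0_s$ then $s^{-1}(v) = \emptyset$, so no edge has source $v$, and in terms of the adjacency matrix this reads $a_{kv} = 0$ for all $k$, whence $(B\tau)_v = \sum_k a_{kv}\tau_k = 0$. Thus $B\tau$ is supported on $E^0_r$ and coincides with $e^{\beta}\tau$ there.

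Finally I would take an arbitrary $f \in {\rm C}(E^0)^+$, write $f = \sum_l f_l \chi_l$ with each $f_l \ge 0$, and split the pairing according to sinks and non-sinks. Since $(B\tau)_v = 0$ on sinks, only the $E^0_r$-terms survive in $(B\tau|f)$, giving $(B\tau|f) = e^{\beta}\sum_{l \in E^0_r}\tau_l f_l$. On the other hand $e^{\beta}(\tau|f) = e^{\beta}\sum_{l \in E^0_r}\tau_l f_l + e^{\beta}\sum_{v \in E^0_s}\tau_v f_v$, and the second sum is nonnegative because $\tau_v \ge 0$ and $f_v \ge 0$. Subtracting, $e^{\beta}(\tau|f) - (B\tau|f) = e^{\beta}\sum_{v \in E^0_s}\tau_v f_v \ge 0$, which is precisely $(\beta 2)$.

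There is no serious obstacle here: the content is really just the observation that a tracial state is a positive measure and that $B\tau$ vanishes on the sinks, so the only way $(\beta 2)$ could fail would be against functions detecting the sink vertices, where the inequality instead holds automatically by positivity. The one point to state cleanly is that $(\beta 1)$ may be applied to each coordinate function $\chi_l$ with $l \in E^0_r$ individually, which is legitimate since $(\beta 1)$ is an equality of linear functionals on all of $J_X = {\rm C}(E^0_r)$, not merely on its positive cone.
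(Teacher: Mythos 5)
Your proof is correct and follows essentially the same route as the paper's: $(\beta 1)$ disposes of the part of $f$ supported on $E^0_r$, while at a sink $v$ the absence of outgoing edges gives $(B\tau)_v = \sum_k a_{kv}\tau_k = 0$, so positivity of $\tau$ yields the inequality there. The only difference is presentational --- you spell out the splitting of a general $f \in {\rm C}(E^0)^+$ over sinks and non-sinks explicitly, whereas the paper merely checks $f = \chi_i$ for a sink $i$ and leaves the rest implicit.
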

\begin{proof}
If $E$ has no sink, the Lemma is trivial.

Let $i$ be a sink and put $f = \chi_i$.
The left hand side of (\ref{eq:ad2}) is
\[
  \sum_{l=1}^{n} a_{k,l}f_l  = a_{k,i}.
\]
If $i$ is a sink then it becomes $0$.
\end{proof}

If $E$ has no sink the conditions $(\beta 1)$ and $(\beta 2)$
are made into one condition:
\[
  (\beta) \qquad (B\tau | f) = e^{\beta} (\tau | f) \qquad f \in A.
\]
Then $\tau$ is a Perron-Frobenius eigenvector and $e^{\beta}$ is
the Perron-Frobenius eigenvalue of $B$.

We assume that $E$ has a sink.
We denote by $E^0_1$ the set of vertices such that there exists an infinite
path from them, and denote by $E^0_4$ the set of vertices such that there
exists no infinite path from them.  We note that $E^{0}_1$ is not empty
if and only if the graph $E$ has a loop because $E$ is a finite graph.

We note that $E^{0} = E^0_1 \cup E^0_4$ and $E^0_1 \cap E^0_4 = 
\emptyset$.
The set $E^0_4$ contains all sinks, and all paths which start from the 
vertices
in  $E^0_4$ must end at sinks.

We note that there exists no edge from vertices in $E^0_4$ to
vertices in $E^0_1$.  If such an edge exists, it holds that
there exists an infinite path which starts from a vertex in $E^{0}_4$.

\begin{lemma}
We can number $E^{0}$ as follows:
\begin{enumerate}
  \item The numbers of vertices in $E^0_4$ are larger than that of
        vertices in $E^0_1$.
  \item There exists no edge from $j$ to $i$ where $i<j$ and $i$ and $j$ 
are
        edges in $E_4^0$.
  \item The numbers of sinks are larger than the number of vertices which
        are not sinks.
\end{enumerate}
\end{lemma}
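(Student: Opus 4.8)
The plan is to split $E^{0}$ into two consecutive blocks, numbering the vertices of $E^{0}_1$ first and those of $E^{0}_4$ last, and then to fix the order inside $E^{0}_4$ so that (2) and (3) hold together. With this block structure condition (1) is immediate: assign $1,\dots,|E^{0}_1|$ to $E^{0}_1$ in any order and $|E^{0}_1|+1,\dots,n$ to $E^{0}_4$. No condition constrains the edges inside $E^{0}_1$ or the edges from $E^{0}_1$ into $E^{0}_4$, and there are no edges from $E^{0}_4$ back to $E^{0}_1$, so all the real work takes place inside $E^{0}_4$.

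First I would note that the full subgraph on $E^{0}_4$ is acyclic: a directed cycle inside $E^{0}_4$ could be traversed indefinitely to yield an infinite path starting at one of its vertices, contradicting the definition of $E^{0}_4$, and since $E$ is finite the absence of cycles is equivalent to the absence of infinite paths. Hence $E^{0}_4$ with its internal edges is a finite directed acyclic graph. For $v\in E^{0}_4$ put $d(v)$ equal to the length of the longest directed path in $E^{0}_4$ that starts at $v$; this is finite by acyclicity, and $d(v)=0$ precisely when $v$ emits no edge inside $E^{0}_4$. Because an edge with source in $E^{0}_4$ must have range in $E^{0}_4$, emitting no edge inside $E^{0}_4$ is the same as emitting no edge at all, so $\{v\in E^{0}_4 : d(v)=0\}$ is exactly the set of sinks. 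Now number $E^{0}_4$ in order of \emph{decreasing} $d$, breaking ties arbitrarily; ties are harmless because two vertices of equal $d$-value can never be joined by an edge, as an edge with source $v$ and range $w$ forces $d(v)\ge 1+d(w)>d(w)$. The sinks, being the vertices of minimal value $d=0$, then receive the largest labels in the block, hence the largest labels overall since $E^{0}_1$ contains no sinks; this is (3). Moreover for any internal edge from $v$ to $w$ we have $d(v)>d(w)$, so $v$ gets a strictly smaller label than $w$; equivalently there is no edge from $j$ to $i$ with $i<j$, which is (2).

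I do not expect a genuine obstacle here; the only delicate point is the compatibility of (2) and (3), namely that one can topologically order $E^{0}_4$ and still drive all sinks to the very end. The longest-path function $d$ settles this in one stroke, since the sinks are precisely the vertices where $d$ attains its minimum, so ordering by decreasing $d$ sends them to the tail automatically while orienting every internal edge from a smaller to a larger label. The remaining verifications --- that edges of the other types play no role in (1)--(3), and that $E^{0}_1$-vertices are non-sinks because each emits an infinite path and therefore at least one edge --- are routine.
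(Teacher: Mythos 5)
Your proof is correct. In substance it builds the same ordering as the paper, but the packaging is genuinely different: the paper proceeds by iterated sink removal --- number the sinks of $E$ last, pass to the graph $E(1)$ obtained by deleting the sinks and the edges whose ranges are sinks, renumber so the sinks of $E(1)$ come next, and iterate, arguing that vertices of $E^0_1$ are never removed while every vertex of $E^0_4$ eventually is --- whereas you define the longest-path rank $d(v)$ on the subgraph spanned by $E^0_4$ and sort that block by decreasing $d$. The two constructions have identical layers: $d(v)=k$ exactly when $v$ is a sink of $E(k)$, i.e.\ is removed at stage $k$ of the peeling, so your rank function is the closed form of the paper's recursion. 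What your formulation buys is a one-stroke verification of (2) and (3) --- an edge $v\to w$ inside $E^0_4$ forces $d(v)\ge 1+d(w)$, so every internal edge goes from a smaller to a larger label, and the sinks are precisely the $d=0$ vertices, hence land at the very end --- together with explicit proofs of facts the paper uses only implicitly: that the subgraph on $E^0_4$ is acyclic (a cycle would generate an infinite path), that an edge with source in $E^0_4$ cannot have range in $E^0_1$ (so ``no internal outgoing edge'' really means ``sink of $E$''), and that $E^0_1$ contains no sinks. The paper's peeling is more algorithmic and makes termination transparent ($E$ finite forces the process to stop), but its proof never explicitly checks the topological-order condition (2); your argument closes that gap cleanly, and your observation that vertices of equal rank are never adjacent justifies the arbitrary tie-breaking.
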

\begin{proof}
We note that $E^0_4$ contains sinks.
Fist, we number vertices of $E$ so that the numbers of sinks are
larger than the numbers of vertices which are not sink.
We denote by $E(1)$ the graph obtained by removing sinks and edges
whose ranges are sinks.
If $E(1)$ has no sink, the proof is completed.
If $E(1)$ has sinks, we renumber vertices of $E(1)$ so
that the numbers of sinks in $E(1)$ are larger than the numbers
of vertices which are not sinks.  We get graphs $E(0)$, $E(1)$, $E(2)$,
\dots, inductively.  We has put $E(0)=E$ for the convenience.
Since $E$ is a finite graph, there exists a non negative integer $r$
such that $E(r)$ contains a sink and $E(r+1)$ is empty or $E(r+1)$
has no sink.

The vertices in $E^{0}_1$ are not removed, because
there exists a infinite path from starting from vertices
in $E^{0}_1$.  On the other hand, vertices in $E^{0}_4$ are
removed because a path starting from vertex in $E^{0}_4$
must reaches a sink, and if such a path remains, then a sink is
also remained.
\end{proof}

We denote by $F$ the graph obtained by removing vertices in $E^{0}_4$
and edges whose ranges are in $E^{0}_4$.  We call $F$ the core of the
graph $E$.

We put $E^{0}_3 = E^{0}_s$ and put $E^{0}_2 = E^{0}_4 \backslash E^{0}_3$.
Then $E^{0}$ is expressed as the disjoint union of $E^{0}_1$,
$E^{0}_2$ and $E^{0}_3$.  Using this dividing of vertices, we write
$f = {}^t[f_1\ f_2 \ f_3]$, $f_i \in C(E^0_i)$ $(i=1,2,3)$ for
$f \in A=C(E^0)$, and $\tau = {}^t[\tau_1 \ \tau_2 \ \tau_3]$ for 
a tracial state
$\tau$ of $A$.
We rewrite $(\beta 1)$ using the above block expression.
We use the notation $(\tau,f)$ of dual paring instead of $\tau(f)$.

We write the following equation
\[
  (B \tau, f) = e^{\beta} (\tau, f) \qquad f \in C(E^{0}_r).
\]
using the above block notation as follows:
\[
\left(
\begin{bmatrix}
     B_{11} & B_{12}  &  B_{13} \\  O  &  B_{22}  &  B_{23} \\
     O  &  O  & O
    \end{bmatrix}
    \begin{bmatrix} \tau_1  \\  \tau_2  \\ \tau_3 \end{bmatrix},
    \begin{bmatrix} f_1 \\ f_2 \\ 0 \end{bmatrix}  \right)
   =  \left( \begin{bmatrix}
      e^{\beta} \tau_1 \\ e^{\beta} \tau_2 \\ e^{\beta} \tau_3
     \end{bmatrix},
     \begin{bmatrix} f_1 \\ f_2 \\ 0
\end{bmatrix} \right).
\]

Since $f_1$ and $f_2$ are arbitrary, we have
\begin{align}
   B_{11}\tau_1 +B_{12}\tau_2 + B_{13} \tau_3 & = e^{\beta} \tau_1 
\label{first}\\
   B_{22}\tau_2 + B_{23}\tau_3  & = e^{\beta} \tau_2.  \label{second}
\end{align}

 From (\ref{second}), we have
\[
  \tau_2 = e^{-\beta} (B_{22}\tau_2 + B_{23}\tau_3).
\]
Since $(i,j)$ element in $B_{22}$ is $0$ for $i>j$, we can determine
all elements of $\tau_2$ for a given nonnegative $\tau_3$ for every
$\beta > 0$.

We assume $E$ has a loop, and $E^0_1$ is not empty.  We note that
$B_{11}$ is the transpose of the adjacency matrix of $F$.
Let $\lambda_0$ be the Perron-Frobenius eigenvalue of $B_{11}$.
Using (\ref{first}), we have
\[
   (e^{\beta} I - B_{11})\tau_1  = B_{12}\tau_2 + B_{13} \tau_3.
\]
If $e^{\beta}$ is greater than $\lambda_0$,
for nonnegative $\tau_2$, $\tau_3$ we can determine nonnegative
$\tau_1$ by
\[
  \tau_1 = (e^{\beta} I -B_{11})^{-1}(B_{12}\tau_2 + B_{13} \tau_3).
\]

For a sink $v$, let $\tau_3$ be the state corresponding to the 
Dirac measure $\delta_v$ on $v$.
 we can determine $\tau_2$ and $\tau_1$,
and we can get a tracial state $\tau_v$ by the normalization.  The tracial
state $\tau_v$ on $A$ gives the $\beta$-KMS state $\varphi_v$ on
$\cO_{E}$.

We summarize the results as the following theorem.

\begin{thm}\label{th:graph}
(1)\,\,We assume that $E$ has a loop.  We denote by $\lambda_0$
the Perron-Frobenius eigenvalue of the transposed of the adjacency
matrix of the core $F$.  If $\beta > \log \lambda_0$, the set of the 
extreme
$\beta$-KMS states on $\cO_E$ with respect to the gauge action
correspond to the Dirac measures on the set of sinks in $E$.

\noindent
(2)\,\,We assume that $E$ has no loop.  Then for every $\beta > 0$,
the set of the extreme $\beta$-KMS states on $\cO_E$ with respect
to the gauge action correspond to the Dirac measures on the set of
sinks in $E$.
\end{thm}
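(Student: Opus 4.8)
The plan is to translate the statement entirely into linear algebra by means of \thmref{th:KMS} and then to solve the resulting equations starting from the sinks and working back toward the recurrent part of the graph. By \thmref{th:KMS} the $\beta$-KMS states on $\cO_E$ are in affine bijection with the tracial states $\tau$ on $A=C(E^0)$ satisfying the $\beta$-condition; since $A$ is commutative these tracial states are just states, i.e.\ probability measures on the finite set $E^0$, which in the block decomposition $E^0=E^0_1\cup E^0_2\cup E^0_3$ are nonnegative vectors ${}^t[\tau_1\ \tau_2\ \tau_3]$ of total mass one. Using the lemma that $(\beta 1)$ implies $(\beta 2)$ for a finite graph, it suffices to impose $(\beta 1)$, which the block computation \eqref{eq:ad1} splits into the two equations \eqref{first} and \eqref{second} together with the observation that the sink block contributes no constraint. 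I would therefore treat $\tau_3$, a nonnegative measure on the sinks $E^0_3$, as free data and recover $\tau_2$ and $\tau_1$ from it.

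Next I would solve \eqref{second} and then \eqref{first}. By the vertex numbering established earlier, the block $B_{22}$ is strictly upper triangular, hence nilpotent, so $e^{\beta}I-B_{22}$ is invertible for every $\beta>0$ with entrywise nonnegative inverse given by the finite sum $\sum_{k\ge 0}e^{-(k+1)\beta}B_{22}^{\,k}$; thus \eqref{second} determines $\tau_2\ge 0$ uniquely from $\tau_3$. For \eqref{first}, in case (2) there is no loop, so $E^0_1=\emptyset$, the block $B_{11}$ is absent, and the argument already terminates for every $\beta>0$. In case (1) the block $B_{11}$ is the transpose of the adjacency matrix of the core $F$, with Perron--Frobenius eigenvalue $\lambda_0$; for $\beta>\log\lambda_0$ the spectral radius of $e^{-\beta}B_{11}$ is strictly less than one, so $e^{\beta}I-B_{11}$ is invertible with nonnegative inverse $\sum_{k\ge 0}e^{-(k+1)\beta}B_{11}^{\,k}$, and \eqref{first} determines $\tau_1\ge 0$ uniquely from $\tau_2,\tau_3$.

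This assignment $\tau_3\mapsto\tau={}^t[\tau_1\ \tau_2\ \tau_3]$ is linear and injective on the cone of nonnegative measures on $E^0_3$, and its inverse is simply the restriction $\tau\mapsto\tau_3$; since every nonnegative solution of the $\beta$-condition has a sink-part from which it is recovered, the map is a linear isomorphism of the measure cone on $E^0_3$ onto the cone of nonnegative solutions. Such a cone isomorphism carries extreme rays to extreme rays, and the extreme rays of the measure cone on the finite set $E^0_3$ are exactly those generated by the Dirac measures $\delta_v$. Passing through the affine bijection of \thmref{th:KMS}, which identifies KMS states with the mass-one solutions, the extreme points of the $\beta$-KMS simplex correspond to these extreme rays, that is, to the Dirac measures on the sinks; after normalization each $\delta_v$ yields precisely the state $\varphi_v$ constructed just before the theorem.

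The main obstacle is the positivity in the second step: the whole parametrization rests on the inverses $(e^{\beta}I-B_{22})^{-1}$ and $(e^{\beta}I-B_{11})^{-1}$ existing and being entrywise nonnegative, since only then do the recovered $\tau_2,\tau_1$ define genuine measures. Nilpotency of $B_{22}$, coming from the triangular numbering, disposes of the block $E^0_2$ uniformly in $\beta$, but for the recurrent block the hypothesis $\beta>\log\lambda_0$ is exactly what forces $\rho(e^{-\beta}B_{11})<1$, hence convergence of the Neumann series and nonnegativity of the inverse; below this threshold $e^{\beta}I-B_{11}$ may fail to be invertible or to have a positive inverse and the bijection with sinks breaks down. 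The remaining points are routine: confirming that the total mass of the recovered $\tau$ is positive (so normalization is legitimate and the mass functional is positive on each extreme ray) and that the affine isomorphisms involved genuinely preserve extreme points.
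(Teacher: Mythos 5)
Your proposal is correct and follows essentially the same route as the paper: reduction via Theorem~\ref{th:KMS} to the $\beta$-condition, the block decomposition $E^0=E^0_1\cup E^0_2\cup E^0_3$ turning $(\beta 1)$ into the equations \eqref{first} and \eqref{second}, solvability of the $E^0_2$-block for all $\beta>0$ by triangularity and of the $E^0_1$-block by $(e^\beta I-B_{11})^{-1}$ when $\beta>\log\lambda_0$, with the sink component $\tau_3$ as the free parameter. The only differences are cosmetic: you use the Neumann series where the paper uses recursive back-substitution for $\tau_2$, and you spell out the cone-isomorphism/extreme-ray argument that the paper leaves implicit when it ``summarizes the results.''
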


\begin{prop}
Each extreme KMS state in Theorem \ref{th:graph} generates a type I factor.
\end{prop}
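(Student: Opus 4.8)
The plan is to prove that the von Neumann algebra $M := \pi_{\varphi_v}(\cO_E)''$ generated by the GNS representation of the extreme $\beta$-KMS state $\varphi_v$ associated with a sink $v$ is a factor containing a nonzero minimal projection; a factor with a minimal projection is automatically of type I. Thus the argument splits into a factoriality step and a minimality step, and the sink structure does essentially all the work in the second step.

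Factoriality comes from the general theory of KMS states: the $\beta$-KMS states form a Choquet simplex whose extreme points are precisely those states whose GNS representation generates a factor (Bratteli--Robinson). Since $\varphi_v$ is an extreme $\beta$-KMS state by Theorem \ref{th:graph}, $M$ is a factor, and it remains only to exhibit a minimal projection in $M$.

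The key observation is that the sink relation makes $\pi_{\varphi_v}(p_v)$ minimal. Because $v$ is a sink, $s^{-1}(v)=\emptyset$, so no edge $e$ satisfies $s(e)=v$. Since $q_eq_e^*\le p_{s(e)}$ and the projections $\{p_w\}_{w\in E^0}$ are mutually orthogonal, we obtain $p_vq_e=0$, and taking adjoints $q_e^*p_v=0$, for every $e\in E^1$. Now write a dense $*$-subalgebra of $\cO_E$ as the linear span of the monomials $q_\mu q_\nu^*$ with $r(\mu)=r(\nu)$, where for the trivial path one has $q_{\epsilon_w}=p_w$. The relations $p_vq_e=0=q_e^*p_v$ force $p_vq_\mu=0$ and $q_\nu^*p_v=0$ whenever $\mu,\nu$ have positive length, while for trivial paths $p_vp_w=\delta_{v,w}p_v$. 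Hence $p_vq_\mu q_\nu^* p_v$ vanishes unless $\mu=\nu$ is the trivial path at $v$, in which case it equals $p_v$, so that $p_v\cO_E p_v=\mathbb{C}p_v$.

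Finally I would pass to the weak closure: two-sided multiplication by the fixed projection $\pi_{\varphi_v}(p_v)$ is weakly continuous, so $\pi_{\varphi_v}(p_v)\,M\,\pi_{\varphi_v}(p_v)=\overline{\pi_{\varphi_v}(p_v\cO_E p_v)}^{\,w}=\mathbb{C}\,\pi_{\varphi_v}(p_v)$. Because $\varphi_v(p_v)=(\tau_v)_v>0$ (the Dirac mass at the sink $v$ is nonzero, cf.\ the construction of $\tau_v$ preceding Theorem \ref{th:graph}), we have $\pi_{\varphi_v}(p_v)\ne 0$, so it is a nonzero minimal projection in the factor $M$, and therefore $M$ is a type I factor; the same computation applies verbatim to each sink. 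The only ingredient beyond the explicit graph relations is the abstract equivalence between extremality in the KMS simplex and factoriality of the GNS representation, which is where I expect the real (though standard) work to reside; the type I conclusion itself is then immediate from the single identity $p_v\cO_E p_v=\mathbb{C}p_v$.
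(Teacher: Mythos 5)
Your proof is correct, but it takes a genuinely different route from the paper's. The paper argues by explicit computation at the level of traces: using equations (\ref{first}) and (\ref{second}) it writes $(I-e^{-\beta}B)\tau={}^t(0,0,\tau_3)$, inverts via the convergent Neumann series $\sum_i e^{-i\beta}B^i$ applied to the Dirac measure at the sink, and then identifies $\varphi_v$ as a KMS state \emph{of finite type} in the sense of Laca--Neshveyev \cite{LN}, citing \cite{IKW} for the fact that finite-type KMS states generate type I factors. You instead give a self-contained operator-algebraic argument: extremality in the KMS simplex yields factoriality of $\pi_{\varphi_v}(\cO_E)''$ (Bratteli--Robinson), and the sink relation forces the corner identity $p_v\,\cO_E\,p_v=\mathbb{C}p_v$ (your derivation $p_vq_e=p_vp_{s(e)}q_e=0$ is right in this paper's conventions, where $q_eq_e^*\le p_{s(e)}$ and $s^{-1}(v)=\emptyset$ for a sink), which passes to the weak closure and exhibits $\pi_{\varphi_v}(p_v)$ as a nonzero minimal projection --- nonzero because $\varphi_v(p_v)>0$, a fact which in the paper's construction is visible from the third block of $\tau_v$ being $\delta_v$. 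What each approach buys: yours avoids the finite/infinite-type machinery and the external reference entirely, resting only on the graph relations plus the standard extremality--factoriality equivalence, and it even localizes the minimal projection concretely at the sink; the paper's approach is shorter given its framework and places these states within the Laca--Neshveyev classification (finite versus infinite type), which is what lets the authors contrast the $\beta>\log\lambda_0$ regime with the type III behaviour of the infinite-type state at $\beta=\log\lambda_0$ in Proposition \ref{prop:infinite}. One point worth making explicit if you write this up: the weak-closure step uses that $\cO_E$ is unital (finite graph), so $\pi_{\varphi_v}(\cO_E)$ is $\sigma$-weakly dense in $M$ and $\mathbb{C}\pi_{\varphi_v}(p_v)$, being finite-dimensional, is weakly closed.
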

\begin{proof}
We write as $\tau = {}^t \begin{bmatrix} \tau_1 & \tau_2 & \tau_3 
\end{bmatrix}$.
Using the equation (\ref{first}) and (\ref{second}), we can write as
\[
  (I - e^{-\beta} B)
\begin{bmatrix} \tau_1 \\ \tau_2 \\ \tau_3
\end{bmatrix}
  = \begin{bmatrix}
     0 \\ 0 \\ \tau_3.
    \end{bmatrix}
\]
Then we have
\[
  \begin{bmatrix}
   \tau_1 \\ \tau_2 \\ \tau_3
  \end{bmatrix}
= (I-e^{-\beta}B)^{-1}
  \begin{bmatrix}
     0 \\ 0 \\ \tau_3.
  \end{bmatrix}
\]
If $\beta > \log \lambda_0$, then $\sum_{i=0}^{\infty}e^{i \beta}B^i$
is convergent, and we have
\[
  \tau = \sum_{i=0}^{\infty}e^{i \beta}B^i 
\begin{bmatrix}
     0 \\ 0 \\ \tau_3.
  \end{bmatrix}.
\]

Let $v$ be a sink and  $\tau_3$ be the state corresponding to the 
Dirac measure $\delta_v$ on $v$.
We can determine $\tau_2$ and $\tau_1$,
and we  get a tracial state $\tau_v$ by the normalization of $\tau$. 

The $\beta$-KMS state $\varphi_{v}$ extending
$\tau_v$ is of finite type in \cite{LN}, and generates type I factor
(\cite{IKW}).
\end{proof}

We assume that $E$ has a loop and $E^0_1$ is not empty.
If $\beta = \log \lambda_0$, there exists a $\beta$-KMS state on $\cO_{E}$
which is of infinite type in \cite{LN}.  These KMS state are essentially
the same as that given in \cite{EFW}.

\begin{prop}\label{prop:infinite}
We assume $\beta = \log \lambda_0$.  Let $\hat{\tau}_1$ be the normalized
Perron Frobenius eigenvector of $B_{11}$.
Then $(\hat{\tau}_1, 0,0)$ is a $\beta$-KMS state on $\cO_E$.
It corresponds to a $\beta$-KMS state of the graph \cst -algebra associated
graph $F$.
\end{prop}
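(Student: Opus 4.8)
The plan is to verify directly that the candidate vector $\tau = {}^t[\hat{\tau}_1\ 0\ 0]$ is a tracial state on $A$ satisfying the $\beta$-condition, and then to invoke the correspondence already established. First I would note that since $A = C(E^0)$ is commutative, every state is automatically tracial, so it suffices to check that $\tau$ is a nonnegative normalized vector satisfying $(\beta 1)$; the earlier Lemma asserting that $(\beta 1)$ implies $(\beta 2)$ for finite graphs then supplies the second condition, and Theorem \ref{th:KMS} produces the associated $\beta$-KMS state $\varphi$ on $\cO_E$. Nonnegativity of $\hat{\tau}_1$ and the positivity $\lambda_0 \ge 1$ are guaranteed by the Perron--Frobenius theorem applied to the nonnegative matrix $B_{11}$, whose spectral radius is an eigenvalue admitting a nonnegative eigenvector (and $B_{11}$ carries a loop, so $\lambda_0 \ge 1$); after normalizing the entries of $\hat{\tau}_1$ to sum to one, $\tau$ is a genuine state on $A$.

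Next I would substitute $\tau = {}^t[\hat{\tau}_1\ 0\ 0]$ into the block form of $(\beta 1)$, namely equations (\ref{first}) and (\ref{second}). Equation (\ref{second}) reads $B_{22}\cdot 0 + B_{23}\cdot 0 = e^{\beta}\cdot 0$, which holds trivially. Equation (\ref{first}) collapses to $B_{11}\hat{\tau}_1 = e^{\beta}\hat{\tau}_1$, and since $\beta = \log\lambda_0$ this is exactly the eigenvalue equation $B_{11}\hat{\tau}_1 = \lambda_0\hat{\tau}_1$ defining $\hat{\tau}_1$. Thus $(\beta 1)$ holds, and by the implication $(\beta 1)\Rightarrow(\beta 2)$ together with Theorem \ref{th:KMS}, the state $(\hat{\tau}_1,0,0)$ extends to a $\beta$-KMS state on $\cO_E$.

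For the final assertion I would identify this KMS state with the one living on $\cO_F$. The core $F$ has vertex set $E^0_1$ and, as already recorded, $B_{11}$ is the transpose of its adjacency matrix. The structural point to establish is that $F$ has no sinks: for $v \in E^0_1$ there is an infinite path starting at $v$, every vertex along it again lies in $E^0_1$ because its tail is infinite, so the initial edge of this path has range in $E^0_1$ and therefore survives the passage to $F$, giving $v$ an outgoing edge in $F$. Since $F$ has no sinks, the $\beta$-condition on $\cO_F$ reduces to the single requirement that the tracial state be a Perron--Frobenius eigenvector of $B_{11}$ with eigenvalue $e^{\beta}$; for $\beta = \log\lambda_0$ this is precisely $\hat{\tau}_1$. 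Viewing a tracial state on $C(E^0)$ as a probability measure on $E^0$, the measure $(\hat{\tau}_1,0,0)$ is supported on $E^0_1 = F^0$ and restricts to the eigenvector $\hat{\tau}_1$, so under the bijection $\varphi \mapsto \varphi|_{\pi_A(A)}\circ\pi_A$ it corresponds to the $\beta$-KMS state of $\cO_F$.

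The computations here are routine applications of the block decomposition and of Perron--Frobenius theory; the only step demanding genuine care is the verification that the core $F$ has no sinks, since this is what legitimizes the single-condition reduction and hence the identification of $(\hat{\tau}_1,0,0)$ with a KMS state on the subgraph algebra $\cO_F$.
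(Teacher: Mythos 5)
Your proposal is correct and takes essentially the route the paper intends: the paper states this proposition without a separate proof, as an immediate consequence of substituting $\tau = {}^t[\hat{\tau}_1\ 0\ 0]$ into the block equations \eqref{first} and \eqref{second}, invoking the lemma that $(\beta 1)$ implies $(\beta 2)$ for finite graphs, and applying Theorem~\ref{th:KMS}, which is exactly what you do. Your explicit verification that the core $F$ has no sinks (so that the $\beta$-condition for $\cO_F$ collapses to the single Perron--Frobenius eigenvector equation) is a detail the paper uses only implicitly, and it is a worthwhile addition rather than a deviation.
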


The KMS states in Proposition \ref{prop:infinite} generates
type III von Neumann algebra under some condition (\cite{EFW}).

\begin{rem}
  KMS states on Exel-Laca algebras are classified in \cite{EL} and
graph \cst -algebras are known to be strongly Morita equivalent
to some Exel-Laca algebra by adding tails to sinks.
But KMS states of finite graphs with sinks can not be obtained directly
from that of Exel-Laca algebras because the relation of KMS states on 
strongly
Morita equivalent \cst -algebras are not known.
\end{rem}


\end{document}